\newtheorem{theorem}{Theorem}[section]
\newtheorem{lemma}[theorem]{Lemma}
\newtheorem{conjecture}{Conjecture}
\newtheorem{example}{Example}[section]
\theoremstyle{definition}
\newtheorem{remark}{Remark}
 \keywords{non-smooth responses; competition theory; finite time extinction; spatially inhomogeneous }
\begin{document}
\title[LotKa-Volterra competition models]{ Some ``Counterintuitive" Results in Two Species Competition}
\author[ Parshad, Antwi-Fordjour, Takyi]{}
\maketitle

% Enter the first author's name and address:
\centerline{\scshape  Rana D. Parshad$^1$,  Kwadwo Antwi-Fordjour $^2$ and Eric M. Takyi$^1$}

\medskip

{\footnotesize
 \centerline{ 1) Department of Mathematics,}
 \centerline{ Iowa State University,}
 \centerline{ Ames, IA 50011, USA}
   }
   \medskip
{\footnotesize
  \centerline{ 2) Department of Mathematics and Computer Science,}
 \centerline{ Samford University,}
 \centerline{ Birmingham, AL 35229, USA}
}
\medskip

\begin{abstract}
We investigate the classical two species ODE and PDE Lotka-Volterra competition models, where one of the competitors could potentially go extinct in finite time. We show that in this setting, classical theories and intuitions do not hold, and various counter intuitive dynamics are possible. In particular, the weaker competitor could \emph{avoid} competitive exclusion, and the slower diffuser may \emph{not} win. Numerical simulations are performed to verify our analytical findings.
\end{abstract}

\section{Introduction}
The two species Lotka-Volterra competition model and its variants have been rigorosly investigated in the last few decades They represent a simplified scenario of two competing species, taking into account growth and inter/intra species competition \cite{Brown1980, Lou2008}. They predict well observed states in ecology, of co-existence, competitive exclusion of one competitor, and bi-stability - and find immense application in applied mathematics, population ecology, invasion science, evolutionary biology and economics, to name a few areas \cite{Lou2008, Cantrell2003, Cantrell2004, CC18}. The equilibrium states are achieved only asymptotically, as is the case in many differential equation population models.
In the current manuscript, we aim to investigate the effect on classical system, when one of the competitors has the potential to go extinct in finite time. There are various motivations to study finite time extinction (FTE) in population dynamics. For example if we are modeling predator-prey densities, then a quantity less than one need not indicate essential extinction - and pest populations could \emph{rebound} from low levels \cite{P19}. This is well observed with soybean aphids (\emph{Aphis glicines}), the chief invasive pest on soybean crop, particularly in the Midwestern US \cite{R11}, that arrival of aphids in very low density ($<<1$) could lead to population levels of several thousand on one leaf, in a matter of 1-2 months \cite{O18}. Another motivation is epidemics, very timely due to the current epidemic because of the COVID19 virus \cite{M20}. Recent work \cite{FCGT18, FT18} has considered a large class of susceptible-infected models with non-smooth incidence functions, that can lead to host extinction in finite time - but are seen to be good fits to modeling disease transmitted by rhanavirus among amphibian populations \cite{G08},  disease transmission in host-parasitoid models \cite{F02}, as well as in virus transmission in gypsy moths \cite{D97}. Non-smooth responses have been considered analytically in the predator-prey literature too \cite{B12, S97, S99}, a careful analysis of this splitting of phase, initial condition dependent extinction, and all of the rich dynamics and bifurcations involved therein, have been considered in \cite{BS19, KRM20}, to the best of our knowledge. They however, have been considered a fair bit in the applied sense due to the good fit they provide to various real data \cite{L12, R05, I08, M04}.

In the current manuscript we show that,

\begin{itemize}

\item FTE in the weaker competitor in the two species ODE Lotka-Volterra competition model, can enable it to avoid competitive exclusion, and persist. This is seen via Lemma \ref{lem:l2}, see Fig. \ref{fig:Extinction-FTE1} (b). FTE in the stronger competitor can lead to bi-stability, via Theorem \ref{thm:FiniteTimeTheorem}, see Fig. \ref{fig:Extinction-FTE1} (c). FTE in the weak competition case, can lead to bi-stability or competitive exclusion, via Theorem \ref{thm:FiniteTimeTheoremWeakCase}, see Fig. \ref{fig:Weak-FTE1}-\ref{fig:Weak-FTE2}. 

\item FTE in the strong competition case, for the spatially homogenous PDE model, can cause diffusion induced recovery - as opposed to diffusion induced extinction seen in the classical case, via Theorem \ref{thm:FTE-ESTIMATE}, see Fig. \ref{fig:estimates}.

\item FTE in the equal kinetics case, in the spatially inhomogenous PDE model, can cause the slower diffuser to \emph{loose}, via Theorem \ref{thm:sdl}, see Fig. \ref{fig:newfte1}.

\item FTE in the weak competition case, in the spatially inhomogenous PDE model, can \emph{change} the bifurcation structure in the space of diffusion parameters, see Fig. \ref{fig:Diff_Diff}.

\end{itemize}

%%%%%%%%%%%%%%%%%%%%%%%%%%%%%%%%%%%%%%%%%%%%%%%%%%%%%%%%%%%
\section{The ODE Case}\label{section:model_formulation}
\subsection{The Extinction/Competitive Exclusion Case} \label{section:Finite_time_extinction}

Consider the classical two species Lotka Volterra competition model,

\begin{equation}\label{GeneralEquation}
\left\{ \begin{array}{ll}
\dfrac{du }{dt} &~ = u (a_{1}-b_{1}u - c_{1}v) ,\\[2ex]
\dfrac{dv }{dt} &~ =  v (a_{2}-b_{2}v - c_{2}u).
\end{array}\right.
\end{equation}
%
%Herein the strength of competition is judged in terms of the strength of interspecific competition
%Now weak competition means
%\begin{align}\label{weakcondition}
%\dfrac{b_{1}}{c_{2}}>\dfrac{a_{1}}{a_{2}}>\dfrac{c_{1}}{b_{2}},
%\end{align}
%
%and strong competition means
%\begin{align}\label{strongcondition}
%\dfrac{b_{1}}{c_{2}}<\dfrac{a_{1}}{a_{2}}<\dfrac{c_{1}}{b_{2}}.
%\end{align}
where $u$ and $v$ are the population densities of two competing species, $a_1$ and $a_2$ are the intrinsic (per capita) growth rates, $b_1$ and $b_2$ are the intraspecific competition rates, $c_1$ and $c_2$ are the interspecific competition rates. All parameters considered are positive constants.

We consider first the competitive exclusion case,

\begin{align}\label{extinctioncondition2}
\dfrac{a_{1}}{a_{2}}>\max\left\lbrace\dfrac{b_{1}}{c_{2}},\dfrac{c_{1}}{b_{2}}\right\rbrace
\end{align}

or

\begin{align}\label{extinctioncondition1}
\dfrac{a_{1}}{a_{2}}<\min\left\lbrace\dfrac{b_{1}}{c_{2}},\dfrac{c_{1}}{b_{2}}\right\rbrace. \quad 
\end{align}

In this setting, as $t\rightarrow \infty$, the solutions $(u(t),v(t))$ converges uniformly to $(a_{1}/b_{1},0)$ or $(0,a_{2}/b_{2})$ irrespective of initial conditions. WLOG we consider the case when $(a_{1}/b_{1},0)$ is globally asymptotically stable, thus $u$ is the stronger competitor and drives $v$ to extinction, and $v$ is said to be competitively excluded \cite{Murray93}. 

We posit that $v$ can \emph{avoid} competitive exclusion by (1) counter intuitively \emph{speeding up} the process to its own demise, via a finite time extinction (FTE) dynamic or also (2) if the stronger competitor $u$ possessed the FTE dynamic.
To this end consider,

\begin{equation}
\label{eq:Ge1}
\left\{ \begin{array}{ll}
\dfrac{du }{dt} &~ = a_{1}u - b_{1}u^{2} - c_{1}u^{p} v ,   \ 0 < p  \leq 1,         \\[2ex]
\dfrac{dv }{dt} &~ =  a_{2} v - b_{2} v^{2} - c_{2} uv^{q}, \ 0 < q \leq 1.
\end{array}\right.
\end{equation}

We see that the classical model is a special case of the above, when $p=q=1$.
Note, $0 < p < 1, q=1$, allows for \emph{finite} time extinction (FTE) of $u$,
 and $0 < q < 1, p=1$, allows for \emph{finite} time extinction (FTE) of $v$.

% \begin{enumerate}[label=(\Roman*)]
% \item $g$ is continuous for $x_1\geq 0$ and $g(0)=0$;
% \item $g$ is smooth for $x_1>0$ and $g^{\prime}(x_1)>0$ for $x_1>0$;
% \item $f$ is smooth for $x_1\geq 0$;
% \item There exists $\dfrac{a_1}{b_1}>0 $ such that $\left(x_1 - \dfrac{a_1}{b_1}\right) f(x_1)<0$ for $x_1\geq 0$, $x_1 \neq \frac{a_1}{b_1}$;
% \item For $0<m_1<1$, $g^{\prime}(0^{+}):= \lim_{x_1\to 0^+} \dfrac{g(x_1)}{x_1}=+\infty$;
% \item $g$ is not smooth for $x_1=0$ when $0<m_1<1$;
% \item The integral $\lim_{\epsilon\to 0}\displaystyle\int_{\epsilon}^{\beta}\dfrac{dx_1}{g(x_1)}$ converges for fixed  $\beta>0$.
% \end{enumerate}

%\section{Finite Time Extinction of one competitor}
%As afforementioned an interesting property of \eqref{eq:Ge1} is that depending on the choices of $p,q$ $u, v$ can go extinct in finite time for certain initial conditions, and so although solutions remain nonnegative, they may go to the extinction state and not persist.

%We first consider the extinction case $\eqref{extinctioncondition2}$, in this setting $(u,v) \rightarrow (a_{1}/b_{1},0)$, and $v$ is competitively excluded. We see how this can change if $u$ or $v$ possesses the FTE dynamic,
\begin{lemma}
\label{lem:l1}
Consider \eqref{eq:Ge1}, $q=1 $ and $\eqref{extinctioncondition2}$ holds, then there exists $0 < p < 1$, for which an interior saddle equilibrium occurs.  
\end{lemma}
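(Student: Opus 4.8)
The plan is to realize the interior equilibria as roots of one scalar equation, compute the Jacobian there in closed form, and then let $p\to1^-$ to force the determinant negative. First, with $q=1$ an interior equilibrium $(u^*,v^*)$ of \eqref{eq:Ge1} satisfies $a_2-b_2v^*-c_2u^*=0$ and $a_1-b_1u^*-c_1(u^*)^{p-1}v^*=0$. The first relation gives $v^*=(a_2-c_2u^*)/b_2$, which is positive exactly when $u^*\in(0,a_2/c_2)$; inserting this into the second and multiplying through by $b_2$, one sees that $u^*$ must be a root in $(0,a_2/c_2)$ of
\[
g(u):=a_1b_2-b_1b_2\,u-c_1a_2\,u^{p-1}+c_1c_2\,u^{p}.
\]
Since $0<p<1$, the term $-c_1a_2u^{p-1}$ gives $g(u)\to-\infty$ as $u\to0^+$, while a one-line computation gives $g(a_2/c_2)=(b_2/c_2)(a_1c_2-a_2b_1)>0$ under \eqref{extinctioncondition2}; by the intermediate value theorem an interior equilibrium $(u^*,v^*)$ therefore exists for every $p\in(0,1)$.

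Next I would linearize. Writing out the Jacobian $J$ of \eqref{eq:Ge1} (with $q=1$) at $(u^*,v^*)$ and simplifying its entries with the equilibrium identities $c_1(u^*)^{p-1}v^*=a_1-b_1u^*$ and $a_2-c_2u^*=b_2v^*$ yields
\[
J=\begin{pmatrix}(1-p)a_1-(2-p)b_1u^* & -c_1(u^*)^p\\ -c_2v^* & -b_2v^*\end{pmatrix},
\]
so that $\det J=-v^*\,\Phi$ with $\Phi:=(1-p)a_1b_2-(2-p)b_1b_2\,u^*+c_1c_2\,(u^*)^{p}$. Since $v^*>0$, the equilibrium is a (hyperbolic) saddle precisely when $\Phi>0$; note the trace plays no role once $\det J<0$.

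Finally I would exploit the limit $p\to1^-$. From the identity $g(u,p)-\big(a_1b_2-c_1a_2-(b_1b_2-c_1c_2)u\big)=c_1(1-u^{p-1})(a_2-c_2u)$ one reads off that $g(\cdot,p)$ converges uniformly on each interval $[\delta,a_2/c_2]$, $\delta>0$, to the affine map $\ell(u)=a_1b_2-c_1a_2-(b_1b_2-c_1c_2)u$; by \eqref{extinctioncondition2}, $\ell(0)=a_1b_2-c_1a_2>0$ and $\ell(a_2/c_2)=(b_2/c_2)(a_1c_2-a_2b_1)>0$, so $\ell>0$ on all of $[0,a_2/c_2]$. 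Hence for $p$ close to $1$ every root of $g$ lies in $(0,\delta)$, and since $\delta$ is arbitrary $u^*(p)\to0$ as $p\to1^-$. Feeding this back into $g(u^*(p),p)=0$ gives $c_1a_2(u^*(p))^{p-1}=a_1b_2-b_1b_2u^*(p)+c_1c_2(u^*(p))^{p}\to a_1b_2$, so $(u^*(p))^{p-1}\to a_1b_2/(c_1a_2)>1$ (again by \eqref{extinctioncondition2}); taking logarithms forces $\log u^*(p)\sim-\log\!\big(a_1b_2/(c_1a_2)\big)/(1-p)$, i.e.\ $u^*(p)$ decays faster than any power of $1-p$. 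Consequently $u^*(p)/(1-p)\to0$ and $(u^*(p))^{p}/(1-p)\to0$, whence $\Phi/(1-p)\to a_1b_2>0$; therefore $\Phi>0$, i.e.\ $\det J<0$, for all $p$ in a left-neighbourhood of $1$, which exhibits the required $p\in(0,1)$ with an interior saddle.

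I expect the last step to be the real obstacle: $\det J$ is of order $1-p$ to leading order, so the crude bound $u^*(p)=o(1)$ is not enough — one genuinely needs $u^*(p)=o(1-p)$. This is precisely where the non-smooth term (the $u^{p-1}$ singularity responsible for FTE of $u$) is used: it pins $(u^*)^{p-1}$ to a constant exceeding $1$ and thereby forces the super-polynomial decay of $u^*(p)$. A secondary point to keep track of is that $g$ may possess several roots in $(0,a_2/c_2)$; the argument above applies to any one of them, and for $p$ near $1$ the relevant root is the small one at which $g$ first crosses zero from below.
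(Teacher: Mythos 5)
Your proposal is correct, and its existence step is essentially the paper's: the paper intersects the nullclines $v=u^{1-p}\bigl(\tfrac{a_1}{c_1}-\tfrac{b_1}{c_1}u\bigr)$ and $v=\tfrac{a_2}{b_2}-\tfrac{c_2}{b_2}u$, using that the first vanishes at $u=0$ while the second is positive there and the ordering reverses before $u=a_2/c_2$; your scalar function $g(u)=a_1b_2-b_1b_2u-c_1a_2u^{p-1}+c_1c_2u^{p}$ is the same sign change after substituting one nullcline into the other, and both arguments in fact yield an interior equilibrium for \emph{every} $p\in(0,1)$. Where you genuinely diverge is the saddle classification. The paper disposes of it with ``standard linear analysis'' and a pointer to Appendix \ref{a1}, which only records the general trace/determinant formulas and verifies the sign of the determinant numerically, in an example ($p=1$, $q=0.3$) that belongs to the regime of Lemma \ref{lem:l2} rather than this one. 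You instead prove $\det J<0$ analytically: after simplifying the Jacobian with the equilibrium identities you reduce the question to the sign of $\Phi=(1-p)a_1b_2-(2-p)b_1b_2u^*+c_1c_2(u^*)^{p}$, and then use the relation $c_1(u^*)^{p-1}(a_2-c_2u^*)=a_1b_2-b_1b_2u^*$ together with \eqref{extinctioncondition2} to get $(u^*)^{p-1}\to a_1b_2/(c_1a_2)>1$, hence $u^*(p)=o((1-p)^N)$ for every $N$ and $\Phi\sim(1-p)a_1b_2>0$ as $p\to1^-$. This buys rigor at the cost of generality --- you certify the saddle only for $p$ near $1$, which is all the lemma asks for --- and your remark that $\det J$ is itself of order $1-p$, so that the crude bound $u^*=o(1)$ would not close the argument, isolates a real subtlety that the paper's appeal to ``standard'' linearization passes over silently. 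The only caveat worth recording is the one you already note: $g$ may have several roots, and your asymptotics apply uniformly to all of them since every root is driven into $(0,\delta)$ as $p\to1^-$, so no further selection argument is needed.
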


\begin{proof}
The $u$ and $v$ nullclines are given by,

\begin{equation}
v=f(u)=u^{1-p} \Big(\frac{a_{1}}{c_{1}} - \frac{b_{1}}{c_{1}}u \Big), \ v=g(u)=\frac{a_{2}}{b_{2}} - \frac{c_{2}}{b_{2}}u.
\end{equation}
%The $v$ nullcline is given by,
%
%\begin{equation}
%v=g(u)=\frac{a_{2}}{b_{2}} - \frac{c_{2}}{b_{2}}u$, and via $\eqref{extinctioncondition2}$ we have, 
%\end{equation}
Via $\eqref{extinctioncondition2}$ we must have,
\begin{equation*}
\frac{a_{1}}{c_{1}} - \frac{b_{1}}{c_{1}}u > \frac{a_{2}}{b_{2}} - \frac{c_{2}}{b_{2}}u.
\end{equation*}

Now $f(u)$ is a parabolic shaped polynomial, with zeroes at $u=0, u=\frac{a_{1}}{b_{1}}$. Since the $v$ nullcline is unmoved,  continuity of $f$, and the intermediate value theorem will ensure that there is an intersection of the nullclines in the interior, creating an interior equilibrium. Standard linear analysis proves this is a saddle. 
\end{proof}
For the linear analysis, see Appendix \ref{a1}. Also see Fig. \ref{fig:Extinction-FTE1}.
Now consider the case  where $v$ possesses the FTE dynamic.

\begin{lemma}
\label{lem:l2}
Consider \eqref{eq:Ge1}, $p=1$ and $\eqref{extinctioncondition2}$ holds , then there exists $0< q < 1$, for which two interior  equilibria occur, a saddle and a nodal sink.  
\end{lemma}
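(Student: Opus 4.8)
The plan is a phase-plane argument reducing the number and the type of interior equilibria to the sign pattern of one scalar function. With $p=1$, the nontrivial $u$-nullcline of \eqref{eq:Ge1} is the line $u=(a_1-c_1v)/b_1$, from $(a_1/b_1,0)$ to $(0,a_1/c_1)$, while the nontrivial $v$-nullcline $a_2-b_2v-c_2uv^{q-1}=0$ is the curve $u=H(v):=c_2^{-1}(a_2-b_2v)v^{1-q}$. Because $q<1$ this curve now emanates from the origin, stays strictly in the open first quadrant for $v\in(0,a_2/b_2)$, and returns to the $v$-axis at $v=a_2/b_2$, i.e.\ it is a right-bulging arc. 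Substituting the line into the $v$-nullcline equation, interior equilibria correspond exactly to the zeros in $(0,a_2/b_2)$ of
\[
\Phi(v):=b_1(a_2-b_2v)v^{1-q}-c_2(a_1-c_1v)=b_1c_2H(v)-c_2a_1+c_1c_2v ,
\]
the matching $u=(a_1-c_1v)/b_1$ being automatically positive there since \eqref{extinctioncondition2} forces $a_2/b_2<a_1/c_1$.

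First I would count the zeros of $\Phi$. By \eqref{extinctioncondition2}, $\Phi(0)=-c_2a_1<0$ and $\Phi(a_2/b_2)=-c_2(a_1-c_1a_2/b_2)<0$ (the bracket is positive because $a_1/a_2>c_1/b_2$). A direct computation gives $\Phi''(v)=-b_1(1-q)v^{-q-1}\bigl(qa_2+(2-q)b_2v\bigr)<0$ on $(0,a_2/b_2)$, so $\Phi$ is strictly concave there, while $\Phi'(0^{+})=+\infty$. A strictly concave function that is negative at both endpoints of an interval and has infinite right-derivative at the left endpoint crosses zero at most twice, and does so exactly twice iff its supremum over the interval is positive. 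To make that supremum positive I would let $q\to0^{+}$: then $\Phi\to\Phi_0(v):=-b_1b_2v^2+(b_1a_2+c_1c_2)v-c_2a_1$ uniformly on $[0,a_2/b_2]$, with $\max\Phi_0=(b_1a_2+c_1c_2)^2/(4b_1b_2)-c_2a_1$, so once $a_1$ is small enough that $4b_1b_2c_2a_1<(b_1a_2+c_1c_2)^2$ we get $\sup\Phi>0$ for all small $q$, and I fix such a $q$. This existence step is the one non-routine point and I expect it to be the crux: producing a $q$ for which the $v$-nullcline bulge actually reaches the $u$-nullcline line costs, in addition to \eqref{extinctioncondition2}, a mild upper bound on $a_1$, which I take to be understood as part of the intended parameter regime — the reasoning otherwise mirrors the IVT/continuity argument used for Lemma \ref{lem:l1}.

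Finally I would read off the types. For $E=(u,v)$ an interior equilibrium, using $a_1-b_1u-c_1v=0$ and $a_2-b_2v-c_2uv^{q-1}=0$, the Jacobian of \eqref{eq:Ge1} at $E$ is
\[
J_E=\begin{pmatrix}-b_1u & -c_1u\\[0.4ex] -c_2v^{q} & (1-q)a_2-(2-q)b_2v\end{pmatrix},
\]
and a short manipulation gives the identity $\det J_E=-\,u\,v^{q}\,\Phi'(v)$. If $v_1<v_2$ are the two zeros, write $E_i=(u_i,v_i)$ with $u_i=(a_1-c_1v_i)/b_1$; concavity of $\Phi$ forces $\Phi'(v_1)>0>\Phi'(v_2)$, hence $\det J_{E_1}<0$ and $E_1$ is a saddle, while $\det J_{E_2}>0$. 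Moreover $\Phi'(v)=c_2\bigl(b_1H'(v)+c_1\bigr)$ vanishes only where $H'(v)=-c_1/b_1<0$, hence beyond the abscissa $v^{*}=a_2(1-q)/\bigl(b_2(2-q)\bigr)$ at which $H$ peaks; thus $v_2>v^{*}$, so $(1-q)a_2-(2-q)b_2v_2<0$ and $\operatorname{tr}J_{E_2}=-b_1u_2+(1-q)a_2-(2-q)b_2v_2<0$, making $E_2$ asymptotically stable. Taking $q$ just below the threshold at which $\sup\Phi=0$ (the saddle-node value) shrinks $\det J_{E_2}$ toward $0$ while $\operatorname{tr}J_{E_2}$ stays bounded away from $0$, so $(\operatorname{tr}J_{E_2})^2>4\det J_{E_2}$ and $E_2$ is a (stable) node — the claimed nodal sink. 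Here the concavity bound, the determinant identity and the trace estimate are bookkeeping; the real obstacle is the bulge/intersection step of the middle paragraph.
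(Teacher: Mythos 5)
Your argument is correct in outline but takes a genuinely different route from the paper's, and the difference is instructive. The paper also works with the nullclines as functions of $v$ (its $f_1$ is your line, its $g_1$ is your arc $H$), but instead of counting roots of a scalar concave function it argues by contradiction: assuming $f_1(v)>g_1(v)$ on all of $\left[0,\tfrac{a_1}{c_1}\right]$ for every $q$, it divides by $\tfrac{a_2}{c_2}-\tfrac{b_2}{c_2}v$, raises to the power $\tfrac{1}{1-q}$, and lets $v\to \tfrac{a_1}{c_1}$ to force $0<\tfrac{a_1}{c_1}<0$. That manipulation is invalid on $\left(\tfrac{a_2}{b_2},\tfrac{a_1}{c_1}\right]$, where the divisor is negative: the inequality reverses upon division, and a negative base cannot be raised to a non-integer power, so the limit produces no genuine contradiction. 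Consequently the paper's claim that no hypothesis beyond \eqref{extinctioncondition2} is needed is not actually established, and your extra condition is not an artifact of your method: take $a_1=2$, $a_2=b_1=b_2=c_1=c_2=1$, which satisfies \eqref{extinctioncondition2}; then $g_1(v)=(1-v)v^{1-q}\le 1-v<2-v=f_1(v)$ on $(0,1]$ and $g_1(v)<0\le f_1(v)$ on $(1,2]$, for every $q\in(0,1)$, so no interior equilibrium exists for any $q$. Some additional parametric restriction (your ``bulge reaches the line'' condition) is therefore genuinely required, and you were right to flag that step as the crux.

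Two further remarks. First, your approach buys more than the paper's: strict concavity of $\Phi$ gives \emph{at most} two interior equilibria (the paper's IVT argument only gives at least two), and your determinant identity $\det J_E=-uv^{q}\Phi'(v)$ together with the near-saddle-node limit actually settles the saddle/nodal-sink classification, which the paper defers to ``standard linearization'' and a numerical example in the appendix. Second, a small imprecision on your side: the vertex of $\Phi_0$ sits at $v=\tfrac{b_1a_2+c_1c_2}{2b_1b_2}$, which lies in $\left(0,\tfrac{a_2}{b_2}\right)$ only when $c_1c_2<b_1a_2$; otherwise the supremum of $\Phi_0$ over the relevant interval is attained at the endpoint $\tfrac{a_2}{b_2}$, where $\Phi_0<0$, and your discriminant condition alone does not suffice. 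Your sufficient condition should therefore include the requirement that the vertex lie in the interval. This does not change the structure of the argument.
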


\begin{proof}
We consider the nullclines as functions of $v$. The $u$ and $v$ nullclines are given by,

\begin{equation}
u=f_{1}(v)=\frac{a_{1}}{b_{1}} - \frac{c_{1}}{b_{1}}v, \ u=g_{1}(v)= v^{1-q} \left(\frac{a_{2}}{c_{2}} - \frac{b_{2}}{c_{2}}v\right). 
\end{equation}
%The $v$ nullcline is given by,
%\begin{equation}
%u=g_{1}(v)= v^{1-q} \left(\frac{a_{2}}{c_{2}} - \frac{b_{2}}{c_{2}}v\right)$, again via $\eqref{extinctioncondition2}$ we must have, 
%\end{equation}

Again via $\eqref{extinctioncondition2}$ we must have, 
\begin{equation}
 \frac{a_{2}}{c_{2}} - \frac{b_{2}}{c_{2}}v < \frac{a_{1}}{b_{1}} - \frac{c_{1}}{b_{1}}v.
\end{equation}

We proceed by contradiction. Assume there is no intersection of the nullclines for any $v \in \left[0, \frac{a_{1}}{c_{1}}\right]$, and any $0<q<1$. Then we must have that $f_{1}(v) > g_{1}(v)$, for $v \in \left[0, \frac{a_{1}}{c_{1}}\right]$, and any $0<q<1$. This implies,

\begin{equation}
0 < v^{1-q} \left(\frac{a_{2}}{c_{2}} - \frac{b_{2}}{c_{2}}v\right) < \frac{a_{1}}{b_{1}} - \frac{c_{1}}{b_{1}}v, \ v \in \left[0, \frac{a_{1}}{c_{1}}\right], \ \forall \ 0<q<1.
\end{equation}

WLOG let $0< 1- q = \frac{a}{2b} < 1$, thus we must have,

\begin{equation}
\label{eq:eqn1}
0 < v  <\left(  \frac{\left( \frac{a_{1}}{b_{1}} - \frac{c_{1}}{b_{1}}v\right)}{\left(\frac{a_{2}}{c_{2}} - \frac{b_{2}}{c_{2}}v\right)}\right)^{\frac{2b}{a}}
, \ v \in \left[0, \frac{a_{1}}{c_{1}}\right], \ \forall  \ 0<q<1.
\end{equation}

The power of 2 in exponent guarantees positivity, even though for $ \frac{a_{2}}{b_{2}}  < v \leq  \frac{a_{1}}{c_{1}} $, $ \left( \frac{ \left( \frac{a_{1}}{b_{1}} - \frac{c_{1}}{b_{1}}v\right)}{\left(\frac{a_{2}}{c_{2}} - \frac{b_{2}}{c_{2}}v\right)}\right) < 0$. Next we let $v \rightarrow \frac{a_{1}}{c_{1}} $ and so 
\begin{equation}
\left(  \frac{\left(\frac{a_{1}}{b_{1}} - \frac{c_{1}}{b_{1}}v\right)}{\left(\frac{a_{2}}{c_{2}} - \frac{b_{2}}{c_{2}}v\right)}\right)^{\frac{2b}{a}} \rightarrow 0.
\end{equation}

Thus from \eqref{eq:eqn1}, we obtain $0 < \frac{a_{1}}{c_{1}}  < 0$, which is a contradiction. Thus there must exist some $v^{*}\in \left[0, \frac{a_{1}}{c_{1}}\right], \ $ and some $ 0<q<1$, s.t. $f_{1}(v^{*}) < g_{1}(v^{*})$. Now using continuity of $g_{1}$ and the intermediate value theorem, gives us two intersections, thus two equilibria. Standard linearization shows one to be a saddle, the other is seen to be locally stable by standard theory.

\end{proof}
For the linear analysis, see Appendix \ref{a1}. Also see Fig. \ref{fig:Extinction-FTE1}.

%We first consider the case where $0<p < 1, q=1$, and $\eqref{extinctioncondition1}$. In this setting $u$ is the weaker competitor, $v$ the stronger. If $p=1$, classical theory would predict all initial conditions would go to $(0,a_{2}/b_{2})$.
%
%This  \emph{remains} the case if $0<p<1$. Essentially because the nullclines do not intersect.

%%%%%%%%  Extinction(1)
\begin{figure}[!htb]
\begin{center}
\subfigure[]{
    \includegraphics[scale=.44]{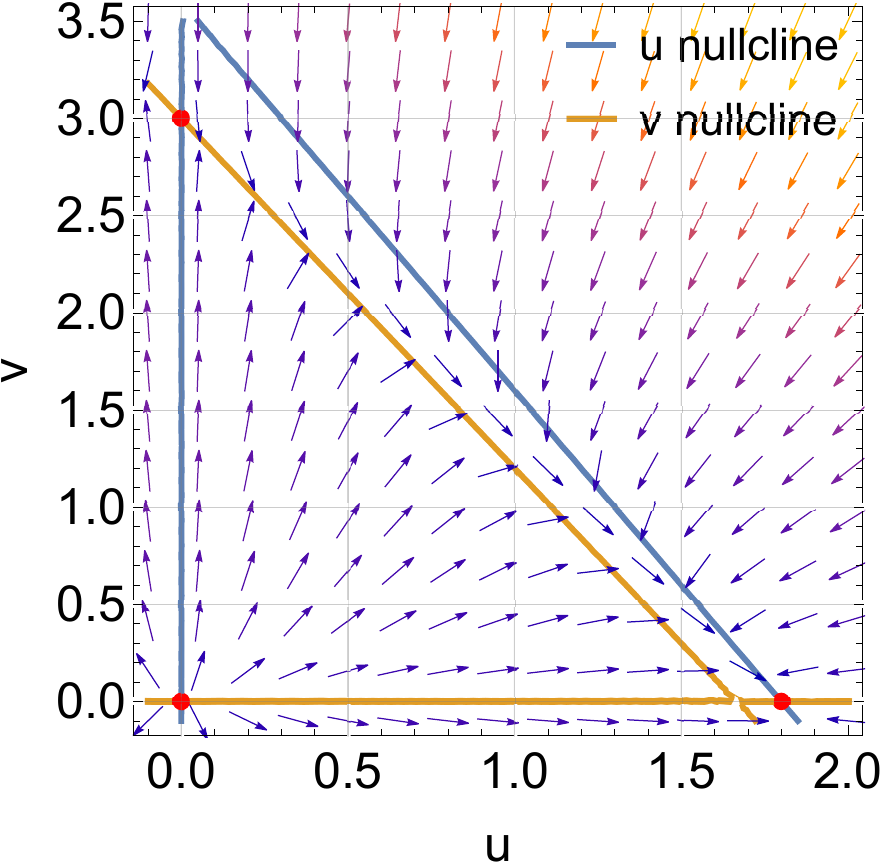}}
\subfigure[]{    
    \includegraphics[scale=.44]{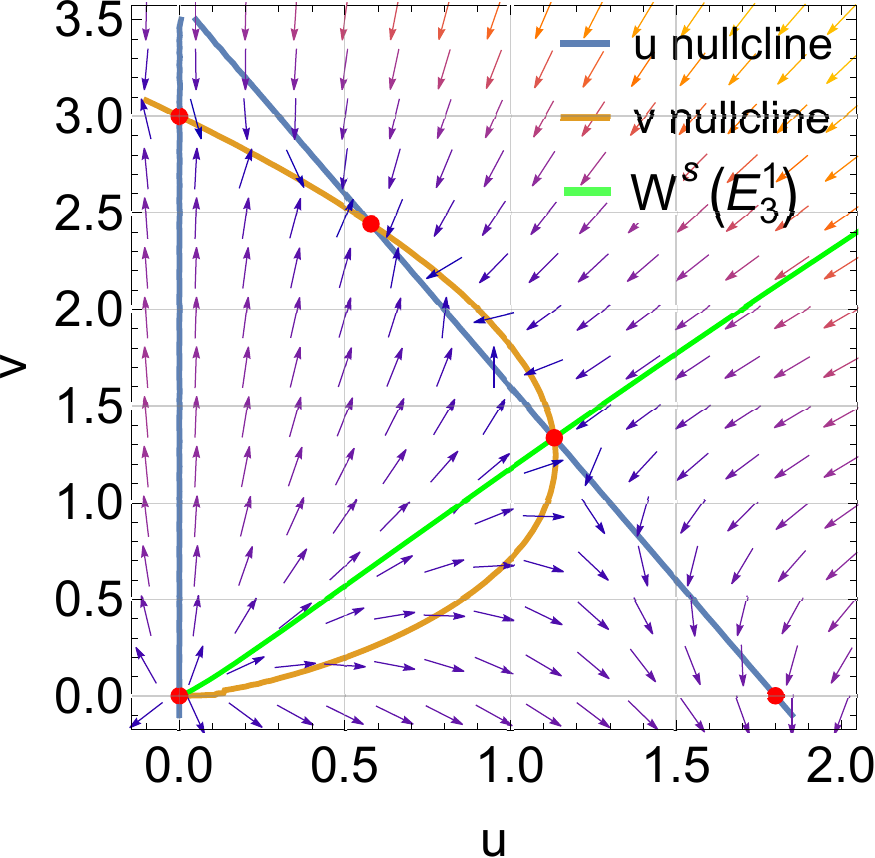}}
\subfigure[]{    
    \includegraphics[scale=.44]{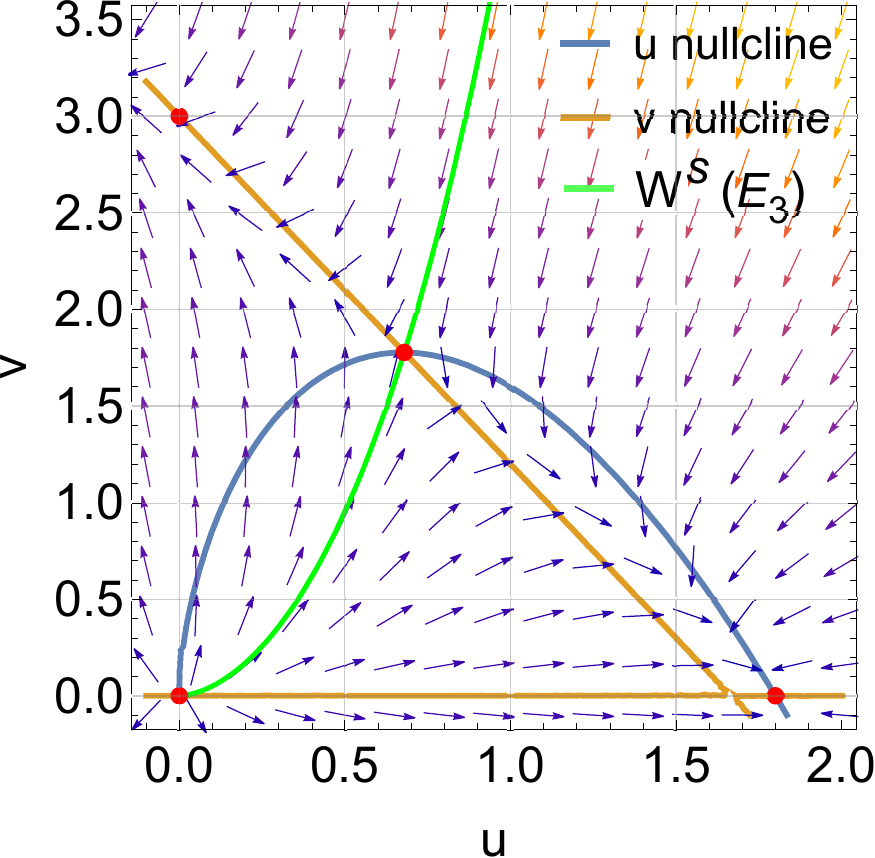}}    
\end{center}
 \caption{ Extinction case \eqref{extinctioncondition2} of model \eqref{eq:Ge1}  for $a_{1}=1.8,~a_{2}=3,~b_{1}=1,~b_{2}=1,~c_{1}=0.5,~c_{2}=1.8$. (a) classical when $p=1,~q=1$. (b) FTE when   $p=1,~q=0.3$. Here $E_3^{1}$ refers to one of the interior equilibria. (c) FTE when $p=0.4,~q=1$.}
      \label{fig:Extinction-FTE1}
\end{figure}

\begin{remark}
Note, when $0<p,q<1$ the kinetic terms are non-smooth, causing issues for uniqueness. Linearization at interior equilibrium is not effected. However, standard linearization methods do not work for boundary equilibria due to the non-smootheness. WLOG if $p=1, 0 < q < 1$, $(u^{*}, 0)$ would be attained by $v \rightarrow 0$ in a finite time, followed by an asymptotic rate of attraction to $u^{*}$. So initial data taken on the $u$-axis, can lead to non-uniqueness backwards in time. This can be circumvented if we avoid data on the $u$-axis. Such and related issues have been dealt with in \cite{P19, BS19, FCGT18, KRM20}. 
\end{remark}

We derive a sufficient condition on the initial data that yields FTE of the stronger competitor. This is 
 stated and proved via the following theorem,

\begin{theorem}
\label{thm:FiniteTimeTheorem}
Consider the competition model given by \eqref{eq:Ge1}, where $0<p<1,~ q=1$, and \eqref{extinctioncondition2} holds.
The stronger competitor $u(t)$  with initial conditions $u(0)>0,~v(0)>0$ will go extinct in finite time, if $v(0) > f(u(0))$, and trajectories will approach $(0,a_{2}/b_{2})$. Here $f$ is as in \eqref{eq:dc1}. 
However, if $(u(0), v(0))$ lies below the stable manifold $W^s(E_3)$, of the interior saddle equilibrium, then all trajectories initiating from them will approach $(a_{1}/b_{1},0)$ asymptotically.
%(or possibly an interior equilibrium).
\end{theorem}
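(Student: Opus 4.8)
The plan is to argue in the $(u,v)$ phase plane, exploiting the nullcline geometry behind Lemma~\ref{lem:l1}: under \eqref{extinctioncondition2} the system \eqref{eq:Ge1} with $0<p<1$, $q=1$ has the boundary node $(a_1/b_1,0)$, the boundary attractor $(0,a_2/b_2)$, the repelling origin, and a single interior saddle $E_3=(u_3,f(u_3))$ with $u_3\in(0,a_2/c_2)$; a short computation using \eqref{extinctioncondition2} gives $f<g$ on $(0,u_3)$ and $f>g$ on $(u_3,a_1/b_1)$, where $g(u)=a_2/b_2-(c_2/b_2)u$ is the $v$-nullcline. First I would record the classical a priori bounds $\limsup_{t\to\infty}u(t)\le a_1/b_1$ and $\limsup_{t\to\infty}v(t)\le a_2/b_2$, obtained by comparison with the decoupled logistic inequalities $\dot u\le u(a_1-b_1u)$ and $\dot v\le v(a_2-b_2v)$; also $v(t)>0$ for all $t$ since $\dot v=v(\cdots)$, so the orbit (which exists globally, the vector field having at most quadratic growth) stays off the $u$-axis and is eventually confined to a fixed compact rectangle.

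For the extinction claim I would show that $\mathcal{R}=\{(u,v):0<u<u_3,\ v>0,\ v>f(u)\}$ is forward invariant: on $\{v=f(u),\,0<u<u_3\}$ one has $\dot u=0$ and $\tfrac{d}{dt}\bigl(v-f(u)\bigr)=\dot v=f(u)\bigl(a_2-b_2f(u)-c_2u\bigr)>0$ because $f<g$ there; on $\{u=u_3,\,v>f(u_3)\}$ one has $\dot u=a_1u_3-b_1u_3^2-c_1u_3^{\,p}v<0$ precisely because $v>f(u_3)$; and $\{v=0\}$ is invariant. Inside $\mathcal{R}$ the identity $a_1u-b_1u^2-c_1u^pf(u)=0$ together with $-c_1u^p<0$ gives $\dot u<0$, so $u(t)\downarrow u_\infty\ge0$. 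Since the vector field is $C^1$ and competitive on the open quadrant ($\partial_v\dot u=-c_1u^p<0$, $\partial_u\dot v=-c_2v<0$), there are no periodic orbits there, hence the bounded orbit converges to an equilibrium; the only equilibrium in $\overline{\mathcal{R}}$ besides the repelling origin and the corner $E_3$ (which is unattainable since $u(t)<u(0)<u_3$ is strictly decreasing) is $(0,a_2/b_2)$, so $(u(t),v(t))\to(0,a_2/b_2)$ and $u_\infty=0$. To upgrade this to finite-time extinction, observe that once $u(t)<a_2/(2c_2)$ we have $\dot v\ge v(\tfrac{a_2}{2}-b_2v)$, hence $v(t)\ge\delta:=\min\{v(t_0),a_2/(2b_2)\}>0$ thereafter; then for $u$ small enough that $a_1u^{1-p}\le\tfrac12c_1\delta$ we get $\dot u\le-\tfrac12c_1\delta\,u^p$, so $\tfrac{d}{dt}\bigl(u(t)^{1-p}\bigr)\le-\tfrac12c_1\delta(1-p)<0$, which forces $u$ to reach $0$ at some finite $T^\ast$; for $t>T^\ast$ the $v$-axis is invariant and $\dot v=v(a_2-b_2v)$ drives $v\to a_2/b_2$, recovering the limit $(0,a_2/b_2)$. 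For data above $f$ with $u(0)\ge u_3$ one must first argue the orbit is funneled into $\mathcal{R}$ rather than leaking through $f$ at some $u\in(u_3,a_1/b_1)$; this is exactly where the separatrix $W^s(E_3)$ enters, and is the delicate point --- see Fig.~\ref{fig:Extinction-FTE1}(c).

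For the second assertion I would invoke the Hirsch convergence theorem for planar competitive systems: every bounded orbit converges to an equilibrium. The stable manifold $W^s(E_3)$, together with the invariant $u$-axis (on which the flow runs to $(a_1/b_1,0)$) and the a priori rectangle, bounds a forward-invariant region $\mathcal{D}$ --- the component of the quadrant ``below $W^s(E_3)$'' whose closure contains $(a_1/b_1,0)$ --- and $\mathcal{D}$ contains no equilibrium in its interior: $(0,0)$ is repelling and lies on $\partial\mathcal{D}$, $E_3$ lies on $W^s(E_3)\subset\partial\mathcal{D}$, and $(0,a_2/b_2)\notin\overline{\mathcal{D}}$. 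Hence any orbit starting in $\mathcal{D}$ stays in $\mathcal{D}$ (it cannot cross the invariant boundary) and converges to an equilibrium of $\overline{\mathcal{D}}$, which can only be $(a_1/b_1,0)$: it cannot converge to the repeller $(0,0)$, and it cannot converge to $E_3$ since $W^s(E_3)$ is disjoint from the open set $\mathcal{D}$.

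The main obstacle throughout is the non-smoothness of the kinetics at $u=0$: I must verify that solutions are unique and well-behaved in every region used (which holds as long as we remain off the $u$-axis, as noted in the Remark), that the differential inequality for $u(t)^{1-p}$ may be integrated down to $u=0$, and that the stable-manifold and competitive-systems machinery are legitimately applied on the open quadrant where the field is $C^1$. The secondary, and genuinely sharp, obstacle is the location of $W^s(E_3)$ relative to $f$ for $u>u_3$: this is what makes ``$v(0)>f(u(0))$'' a clean sufficient condition only after one checks that such orbits are indeed channeled into $\mathcal{R}$, and it is the reason the two assertions together (rather than either alone) encode the full bistable picture.
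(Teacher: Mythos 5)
Your route is genuinely different from the paper's, and it contains a gap that matters for the statement as written. The paper never argues through invariant regions or monotone-systems theory: it lower-bounds $v$ by an explicit decaying exponential, $v(t)\geq A\,e^{-(a_1c_2/b_1)t}$ with $A$ just below $v(0)$ (obtained by comparison with the Bernoulli equation $\dot w=-b_2w^2-(a_1c_2/b_1)w$, using only $u\le a_1/b_1$), feeds this into $\frac{d}{dt}\bigl(u^{1-p}\bigr)\le(1-p)a_1u^{1-p}-(1-p)c_1v$, and integrates with the factor $e^{-(1-p)a_1t}$. The threshold $f$ of \eqref{eq:dc1} is exactly the curve $v_0=\frac{a_1c_2+(1-p)a_1b_1}{(1-p)c_1b_1}\,u_0^{1-p}$ that makes the integrated inequality force $u^{1-p}$ to vanish at some finite $T^*$; the computation is uniform in $u(0)\in(0,a_1/b_1]$ and needs no phase-plane geometry.

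The gap: you take $f$ to be the $u$-nullcline $u^{1-p}\bigl(a_1/c_1-(b_1/c_1)u\bigr)$ of Lemma \ref{lem:l1}, but the theorem's $f$ is the curve of \eqref{eq:dc1}, which lies strictly above the nullcline (its coefficient equals $(a_1/c_1)\bigl(1+c_2/((1-p)b_1)\bigr)$, versus at most $a_1/c_1$ for the nullcline). Your invariant-region argument establishes finite-time extinction only for $u(0)<u_3$, and you yourself flag the case $u(0)\ge u_3$ (``funneling into $\mathcal{R}$'') as unresolved. With your choice of $f$ this is not a removable technicality: just to the right of $E_3$, points slightly above the nullcline can lie below $W^s(E_3)$ and then converge to $(a_1/b_1,0)$, so ``above the nullcline'' is simply not a sufficient condition there. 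The theorem survives because \eqref{eq:dc1} sits high enough, and that is precisely what the paper's explicit estimate delivers and your geometric argument does not. On the plus side, your region-$\mathcal{R}$ analysis, the local mechanism $\frac{d}{dt}\bigl(u^{1-p}\bigr)\le-\tfrac12(1-p)c_1\delta$ near $u=0$, and your treatment of the second assertion via $W^s(E_3)$ and planar competitive-systems theory are sound, and they supply the part of the statement (convergence to $(a_1/b_1,0)$ for data below the separatrix) that the paper's own proof leaves unaddressed.
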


\begin{proof}
Consider the  equation for initial condition $0 < u(0) \leq \frac{a_{1}}{b_{1}}$. Then 
\begin{equation}
 \dfrac{dv }{dt} \geq  - b_{2} v^{2} -  c_{2} uv^{q}\geq   - b_{2} v^{2} -  \dfrac{a_{1} c_{2}}{b_{1}} v.
\end{equation}
This follows as $u \leq \frac{a_{1}}{b_{1}}$, if initially so, by comparison to logistic equation. Also we consider $q = 1$.
Thus,

\begin{equation}
v(t) \geq \dfrac{a_{1} c_{2}}{e^{a_{1}c_{2}(t/b_{1}-C)}-b_{1} b_{2}}>\dfrac{a_{1} c_{2}}{e^{a_{1}c_{2}(t/b_{1}-C)}} = \left(a_{1} c_{2}e^{a_{1} c_{2} C}\right) e^{-\frac{a_{1} c_{2}}{b_{1}} t},
\end{equation}
 for all time $t \geq 0$.

 Here
$C$ is given by $\boxed{v(0) > a_{1} c_{2}e^{a_{1} c_{2} C}} $.

\begin{eqnarray}
 \dfrac{du }{dt} &=& a_{1} u - b_{1} u^{2} - c_{1}u^{p}v, \nonumber \\
&\leq& a_{1} u     - c_{1}u^{p}v.  \nonumber \\
\end{eqnarray}

Now we can divide the above by $u^{p}$ since $u$ is positive, to obtain,

\begin{equation}
 \dfrac{du }{dt}\frac {1}{u^{p}}  \leq a_{1} u ^{1-p} -  c_{1}v
\end{equation}

using the lower bound on $v$ yields,

\begin{equation}
 \dfrac{d }{dt}(u^{1-p}) \leq (1-p) a_{1} u^{1-p} - (1-p) c_{1}\left(a_{1} c_{2}e^{a_{1} c_{2} C}\right) e^{-\frac{a_{1} c_{2}}{b_{1}} t},
\end{equation}

multiplying both sides by the integrating factor $e^{-(1-p) a_{1} t}$,
and subsequently integrating the above in the time interval $[0,t]$ with $t \leq T^{*}$, we obtain

\begin{eqnarray}
&&e^{-(1-p) a_{1} t}u^{1-p} \nonumber \\
&\leq&  (u(0))^{1-p} - c_{1}\left( \dfrac{ (1-p)  b_{1} v(0)}{a_{1} c_{2}+(1-p)a_{1} b_{1}}\right) (1-e^{-(\frac{a_{1} c_{2}}{b_{1}}+(1-p)a_{1})t}). \nonumber \\
\end{eqnarray}

%Finally we obtain,
%
%\begin{equation}
%e^{-(1-l)a_{1}t}(x_{1})^{1-l} \leq  (x_{1}(0))^{1-l} - \left(\frac{(1-l)\left(\frac{1}{\frac{a_{1}}{b_{1}} +d} \right)^{m_{1} } (x_{2}(0))^{m_{2}}}{m_{2}a_{2}+(1-l)a_{1}}  \right)
%\end{equation}

Which then implies the finite time extinction of $u$, if 

\begin{equation}
 (u(0))^{1-p} < c_{1}\left( \dfrac{ (1-p)  b_{1} v(0)}{a_{1} c_{2}+(1-p)a_{1} b_{1}}\right).
\end{equation}

Thus we choose $f$ according to

\begin{equation}
\label{eq:dc1}
f (u(0)) =\left(\dfrac{a_{1} c_{2}+(1-p)a_{1} b_{1}}{(1-p) c_{1} b_{1}}\right) (u(0))^{1-p}
\end{equation}

% \left(\dfrac{a_{1} c_{2} (x_{1}(0))^{1-p}}{K-b_{1} b_{2} (x_{1}(0))^{1-p}}\right)

and for initial data chosen s.t $v(0) \geq f(u(0))$, $u$ will go extinct in finite time.
This proves the theorem.

\end{proof}

 We provide some simulations next to elucidate. 
 
 \begin{figure}[!htb]
\begin{center}
 \subfigure[]{    
    \includegraphics[scale=.45]{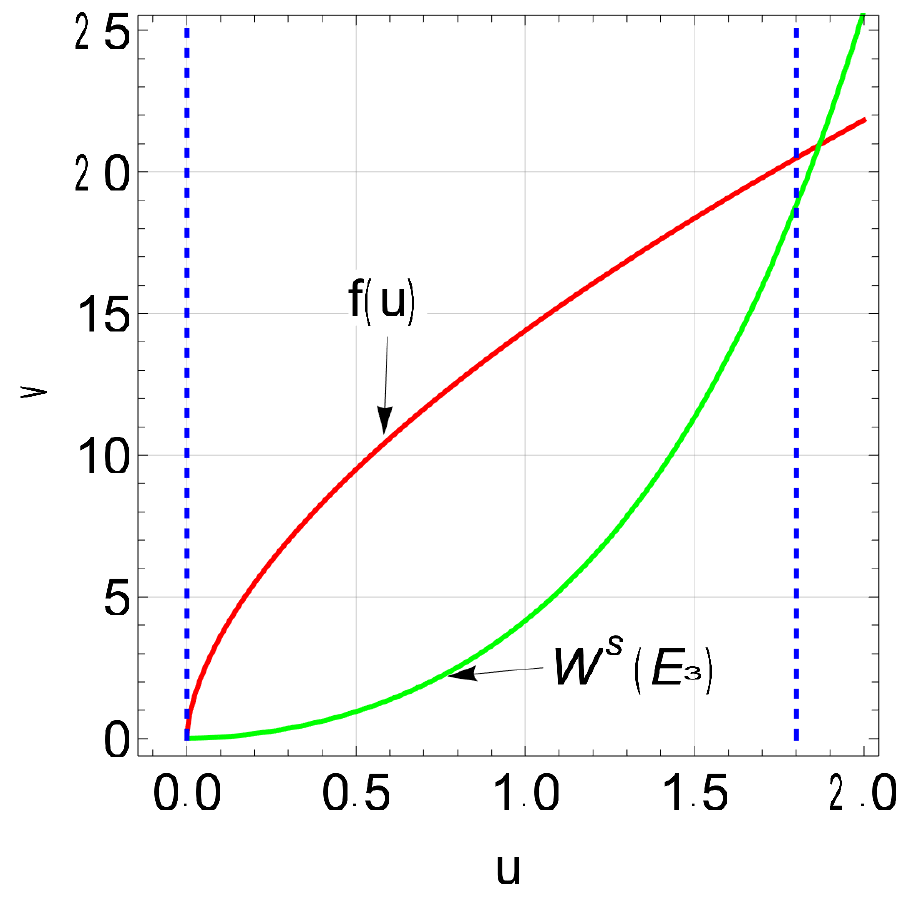}}
 \subfigure[]{    
    \includegraphics[scale=.45]{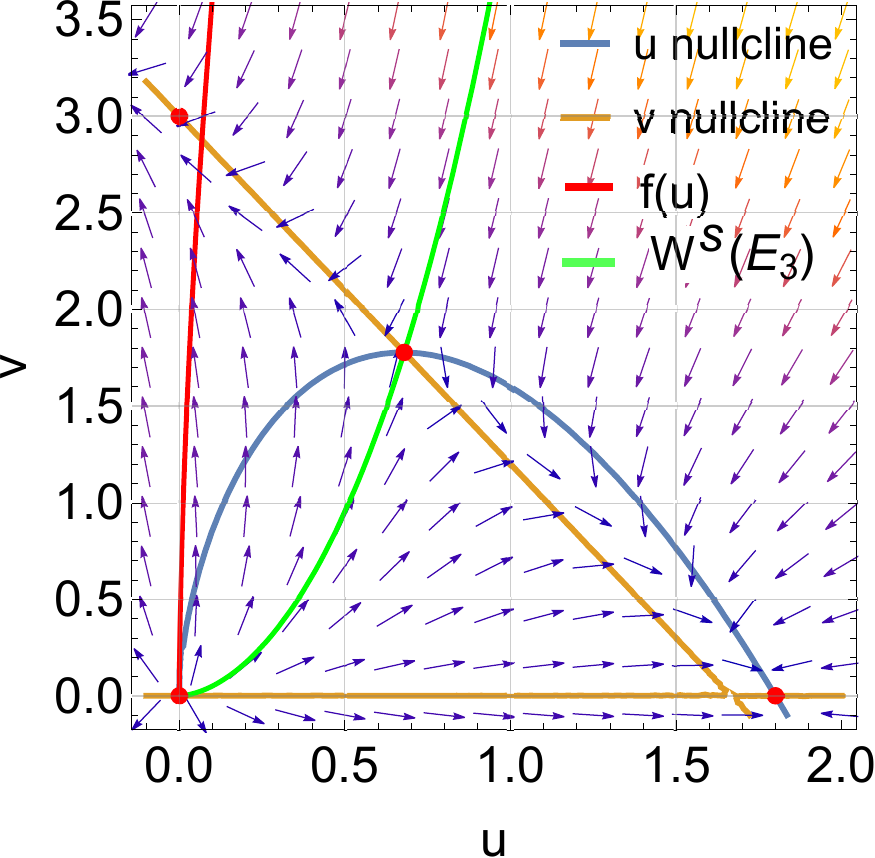}}
\end{center}
 \caption{Herein we demonstrate our results for Theorem \ref{thm:FiniteTimeTheorem}. Here   $a_{1}=1.8,~a_{2}=3,~b_{1}=1,~b_{2}=1,~c_{1}=0.5,~c_{2}=1.8,~p=0.4,~q=1$.}
      \label{fig:Separatrix1}
\end{figure}

%
%\begin{conjecture}
%Consider the competition model given by \eqref{eq:Ge1}, where $0<p=1 - \epsilon, q=1$, and $a_{21} >1>a_{12}$, $\epsilon > 0$. For any $\epsilon >0$, there exist $a_{21}, a_{12}$ s.t. $a_{21} >1>a_{12}$, s.t.
%the stronger competitor $x_1(t)$  with initial conditions $x_1(0)>0,~x_2(0)>0$ will go extinct in finite time, if $x_{2}(0) > f(x_{1}(0))$, for an appropriate $f$ and those trajectories will approach $(0,1)$. 
%\end{conjecture}

\subsection{The Weak Competition/Co-existence Case}
Here we consider the case 

\begin{align}\label{weakcondition}
\dfrac{b_{1}}{c_{2}}>\dfrac{a_{1}}{a_{2}}>\dfrac{c_{1}}{b_{2}}.
\end{align}

 The classical theory for $p=1$, predicts that all initial conditions would be attracted to a interior equilibrium. In this setting the competitors $u$ and $v$ coexist. However, this is not the case if $0<p<1$.

We state the following theorem.
\begin{theorem}
\label{thm:FiniteTimeTheoremWeakCase}
Consider the competition model given by \eqref{eq:Ge1}, where $0<p<1,~ q=1$, and \eqref{weakcondition} holds.
The competitor $u(t)$  with initial conditions $u(0)>0,~v(0)>0$ will go extinct in finite time, if $v(0) > f(u(0))$, and trajectories will approach $(0, a_{2}/b_{2})$. Here $f$ is same as in Theorem \ref{thm:FiniteTimeTheorem}. 
However, if $(u(0), v(0))$ lies below the stable manifold $W^s(E_3^1)$ of the interior equilibrium, then all trajectories will approach the stable interior equilibrium $E_3^2$.
\end{theorem}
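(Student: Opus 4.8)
The plan is to handle the two assertions by completely different routes, using the fact that the finite-time extinction (FTE) mechanism is insensitive to whether the parameters obey the exclusion condition \eqref{extinctioncondition2} or the weak-competition condition \eqref{weakcondition}.

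\emph{Finite time extinction of $u$.} For the first assertion I would rerun, essentially verbatim, the chain of differential inequalities in the proof of Theorem \ref{thm:FiniteTimeTheorem}. The only structural inputs used there were $q=1$ and the invariance of $\{0<u\le a_{1}/b_{1}\}$, the latter coming from comparison with the logistic equation and holding for any positive parameters; the exclusion condition \eqref{extinctioncondition2} is never invoked in that estimate. Hence, for $v(0)\ge f(u(0))$ with $f$ as in \eqref{eq:dc1}, one gets $u^{1-p}(t)\to 0$ at some finite $T^{*}$. Once $u\equiv 0$ on $[T^{*},\infty)$ the $v$-equation in \eqref{eq:Ge1} reduces to the logistic equation $\dot v=a_{2}v-b_{2}v^{2}$, so $v(t)\to a_{2}/b_{2}$; since $\{u=0\}$ is forward invariant and (per the Remark) data are taken off the $u$-axis, the trajectory genuinely approaches $(0,a_{2}/b_{2})$.

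\emph{Convergence to $E_3^2$ below the separatrix.} Here I would argue with the planar phase portrait, whose smooth-category features are all available in the open quadrant $\{u,v>0\}$. First, adapting the nullcline picture of Lemmas \ref{lem:l1}--\ref{lem:l2} to \eqref{weakcondition}: the $u$-nullcline $v=f(u)=u^{1-p}(a_{1}-b_{1}u)/c_{1}$ vanishes at $u=0$ and $u=a_{1}/b_{1}$, and for $p$ close to $1$ it bulges above the straight $v$-nullcline $v=g(u)=(a_{2}-c_{2}u)/b_{2}$ on an interval, producing exactly two interior equilibria; the linearization of Appendix \ref{a1} (unaffected by the non-smoothness at interior points) identifies $E_3^1$ as a saddle and $E_3^2$ as a locally asymptotically stable node/focus. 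Comparison with logistic equations in each variable shows every trajectory eventually enters, and cannot leave, the box $\mathcal{B}=\{0\le u\le a_{1}/b_{1}+\delta,\ 0\le v\le a_{2}/b_{2}+\delta\}$. I would then apply the stable manifold theorem at $E_3^1$: $W^{s}(E_3^1)$ is a $C^1$ curve (globally the backward orbit of its local piece) that splits $\mathcal{B}\cap\{u,v>0\}$ into two open components; let $\Omega$ be the one not containing the approach to $(0,a_{2}/b_{2})$, i.e.\ the ``below'' region of the statement. Next I would verify that $\Omega$ is positively invariant: its boundary is made of an arc of $W^{s}(E_3^1)$ (an orbit, hence not crossable), a segment of the $u$-axis on which $\dot v=v(a_{2}-c_{2}u)>0$ for $0<u<a_{2}/c_{2}$ so the field points inward (and the only boundary equilibrium there, $(a_{1}/b_{1},0)$, is a saddle whose unstable manifold enters the interior, hence not an $\omega$-limit of interior orbits), and arcs of $\partial\mathcal{B}$ on which the field points inward. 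Finally, since $\partial\dot u/\partial v=-c_{1}u^{p}\le 0$ and $\partial\dot v/\partial u=-c_{2}v^{q}\le 0$ on $\{u,v>0\}$, \eqref{eq:Ge1} is a competitive planar system there, so by the Poincar\'e--Bendixson theory for such systems (eventual monotonicity of orbits; equivalently, after exhibiting a Dulac function) there are no periodic orbits and every bounded orbit converges to an equilibrium. As $E_3^2$ is the only equilibrium lying in $\Omega$, every trajectory started below $W^{s}(E_3^1)$ converges to $E_3^2$.

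\emph{Main obstacle.} The delicate point is the interplay of the non-smoothness ($0<p<1$) with these planar tools: uniqueness and the stable-manifold/competitive-systems machinery are clean only in the interior, and one must rule out that $\overline{\Omega}$ secretly meets the FTE set, so that no orbit of $\Omega$ sneaks onto the $u$-axis and off to $(0,a_{2}/b_{2})$. I would dispose of this by running the whole argument strictly inside $\{u,v>0\}$, using that the $u$-axis is repelling there and that $W^{s}(E_3^1)$, being a full orbit, cannot be crossed, so an orbit in $\Omega$ stays bounded away from $u=0$ for all forward time and Poincar\'e--Bendixson applies. Showing that $W^{s}(E_3^1)$ is globally a separatrix with the claimed sidedness — rather than spiralling or limiting onto the boundary — is the step that needs the most care, and is precisely where the competitive structure does the real work.
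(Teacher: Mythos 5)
Your proposal is correct in outline, and it is in fact considerably more complete than what the paper supplies: the paper's entire ``proof'' of Theorem \ref{thm:FiniteTimeTheoremWeakCase} is the single line ``The proof is as of Theorem \ref{thm:FiniteTimeTheorem}.'' That reference covers only your first half --- and your observation there is exactly the right one, namely that the differential-inequality chain in Theorem \ref{thm:FiniteTimeTheorem} (logistic comparison for $u$, exponential lower bound for $v$, division by $u^{p}$ and integration) never invokes \eqref{extinctioncondition2}, so it transfers verbatim to the weak-competition regime and yields the same threshold $f$ from \eqref{eq:dc1}. Your second half --- the stable-manifold/invariant-region/competitive-systems argument for convergence to $E_3^2$ below $W^{s}(E_3^1)$ --- has no counterpart in the paper at all; the authors simply assert it, and your route (no periodic orbits for planar competitive systems, positive invariance of the sub-separatrix region, classification of the boundary equilibria as saddles under \eqref{weakcondition}) is the standard and correct way to actually establish it. One caveat you should make explicit: the existence of the two interior equilibria $E_3^1$ (saddle) and $E_3^2$ (stable) under \eqref{weakcondition} with $0<p<1$ is not automatic --- the paper's own Conjecture \ref{conjecture:C1} and Figs.~\ref{fig:Weak-FTE1}--\ref{fig:Weak-FTE2} indicate that for some parameter windows the interior equilibria disappear entirely when $p<1$ --- so your ``$p$ close to $1$'' nullcline argument should be stated as a standing hypothesis of the theorem (as the statement implicitly presupposes by referring to $E_3^1$ and $E_3^2$) rather than as a fact holding for every $0<p<1$. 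With that hypothesis made explicit, your argument fills a genuine gap the paper leaves open.
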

The proof is as of Theorem \ref{thm:FiniteTimeTheorem}.
% We provide a simulation next to elucidate. 
 
%   \begin{figure}[!htb]
%\begin{center}
%    \includegraphics[scale=.7]{Sep2.eps}
%\end{center}
% \caption{Herein we demonstrate our results for Theorem \ref{thm:FiniteTimeTheoremWeakCase}. Here   $a_{1}=1,~a_{2}=2,~b_{1}=1,~b_{2}=1,~c_{1}=0.3,~c_{2}=1.8,~p=0.4,~q=1$.}
%      \label{fig:Separatrix1}
%\end{figure}

\begin{conjecture}\label{conjecture:C1}
Assume the classical competition model when $p=q=1$ in model \eqref{eq:Ge1}, and the condition in \eqref{weakcondition} holds true, here the competitors coexist. There is a critical window of parameter $c_1\in [c_1^{*},c_1^{**}]$, for which there are two interior equilibria for $0<p<1$ and no interior equilibrium for $0<q<1$. Furthermore, there is a critical window of parameter $c_1\in [c_1^{***},c_1^{****}]$, for which there is no interior equilibrium for $0<p<1$ and there are two interior equilibria for $0<q<1$.
\end{conjecture}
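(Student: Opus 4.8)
The plan is to reduce the whole question, exactly as in the proofs of Lemmas~\ref{lem:l1} and~\ref{lem:l2}, to counting intersections of a straight line with a single-humped concave curve, and then to track how that count changes as $c_1$ sweeps across the weak-competition window $\left(0,\tfrac{a_1b_2}{a_2}\right)$ --- the only range \eqref{weakcondition} allows for $c_1$ once the remaining coefficients are held fixed.

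First I would establish a monotone bifurcation picture in each of the two cases. For $q=1$, $0<p<1$, the $v$-nullcline $v=g(u)=\tfrac{a_2}{b_2}-\tfrac{c_2}{b_2}u$ is a fixed line, while the $u$-nullcline $v=f_{c_1}(u)=\tfrac{u^{1-p}}{c_1}(a_1-b_1u)$ is strictly decreasing in $c_1$ at every $u\in(0,\tfrac{a_1}{b_1})$ and has $f_{c_1}''<0$ there. Hence $f_{c_1}-g$ is strictly concave, the set $\{u:\ f_{c_1}(u)>g(u)\}$ is an interval that shrinks monotonically as $c_1$ increases, and since $f_{c_1}<g$ at both endpoints $u=0$ and $u=\tfrac{a_1}{b_1}$ (by \eqref{weakcondition}), the interior equilibria number exactly two when that interval is nonempty and exactly zero when it is empty. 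As $c_1\to 0^+$ one has $f_{c_1}(u_m)\to\infty$ at the peak $u_m=\tfrac{a_1(1-p)}{b_1(2-p)}$, so the interval is nonempty, whereas for $c_1$ large $f_{c_1}<g$ uniformly, so it is empty; thus there is a single threshold $\bar c_1(p)\in(0,\infty)$ with two interior equilibria for $c_1<\bar c_1(p)$ and none for $c_1\ge\bar c_1(p)$. Symmetrically, for $p=1$, $0<q<1$, the concave curve $u=g_1(v)=\tfrac{v^{1-q}}{c_2}(a_2-b_2v)$ is now the fixed object and the line $u=f_{1,c_1}(v)=\tfrac{a_1}{b_1}-\tfrac{c_1}{b_1}v$ rotates downward about $(0,\tfrac{a_1}{b_1})$ as $c_1$ grows; by the same reasoning there is a single threshold $\tilde c_1(q)$ with no interior equilibrium for $c_1<\tilde c_1(q)$ and exactly two for $c_1>\tilde c_1(q)$. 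The crucial structural fact is that the two monotonicities run \emph{opposite} ways: the $p$-model loses its interior equilibria as $c_1$ increases, the $q$-model gains them.

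Granting this, the two windows fall out provided one can choose $p$ and $q$ so that both thresholds lie strictly inside $\left(0,\tfrac{a_1b_2}{a_2}\right)$ with $\tilde c_1(q)<\bar c_1(p)$: then any $[c_1^{*},c_1^{**}]\subset(0,\tilde c_1(q))$ has the $p$-model showing two interior equilibria and the $q$-model none, while any $[c_1^{***},c_1^{****}]\subset(\bar c_1(p),\tfrac{a_1b_2}{a_2})$ has the roles reversed. The local analysis at those equilibria (which two, or none, and their saddle/sink type) is then routine, exactly as in Lemma~\ref{lem:l2} and Appendix~\ref{a1}, once the concavity of $f_{c_1}$ and $g_1$ is in hand.

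The main obstacle --- and the reason the statement is posed as a conjecture --- is precisely the placement of $\bar c_1(p)$ and $\tilde c_1(q)$ relative to the endpoint $\tfrac{a_1b_2}{a_2}$. Both thresholds depend, through the exponents $1-p$ and $1-q$, on the location and height of the hump of the modified nullcline --- quantities such as $\tfrac{a_2}{c_2(2-q)}\bigl(\tfrac{a_2(1-q)}{b_2(2-q)}\bigr)^{1-q}$ --- and one checks, for instance, that as $p\to1^-$ or $q\to1^-$ the perturbed curve collapses onto a line through the (unique) classical interior equilibrium in a way that forces \emph{two} interior equilibria throughout the whole weak window; so the useful regimes are bounded away from $p=1$ and $q=1$ and must be pinned down by a careful estimate of these hump quantities. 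It is plausible that steering both thresholds into $\left(0,\tfrac{a_1b_2}{a_2}\right)$ (and in the required order) needs, beyond \eqref{weakcondition}, a mild additional quantitative restriction on $a_i,b_i,c_2$; absent that, the defensible form of the claim is ``there exist coefficients satisfying \eqref{weakcondition} and $p,q\in(0,1)$ for which both windows occur,'' which the opposite-monotonicity structure above essentially already yields, and the sharp version would require isolating that extra condition.
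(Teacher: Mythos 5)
The paper does not prove this statement: it is posed explicitly as a conjecture and is supported only by the nullcline plots in Figs.~\ref{fig:Weak-FTE1} and \ref{fig:Weak-FTE2}, so there is no proof of record to compare yours against. Judged on its own terms, your reduction is the right one and is consistent with the paper's numerics: under \eqref{weakcondition} the admissible range for $c_1$ is indeed $\left(0,\tfrac{a_1b_2}{a_2}\right)$; the perturbed nullcline $u^{1-p}(a_1-b_1u)/c_1$ (resp.\ $v^{1-q}(a_2-b_2v)/c_2$) is strictly concave with zeros at the ends of its hump, so its difference with the fixed line is strictly concave and the interior-equilibrium count is generically $0$ or $2$; and the pointwise monotonicity in $c_1$ runs in opposite directions in the two cases, giving a single threshold $\bar c_1(p)$ above which the $p$-model loses its pair and a single threshold $\tilde c_1(q)$ above which the $q$-model gains one. (For the paper's parameters $a_1=1$, $a_2=2$, $b_1=b_2=1$, $c_2=1.8$, $p=0.6$, $q=0.9$, a direct check places both thresholds in $(0.3,0.47)\subset(0,0.5)$, exactly matching the transition between Figs.~\ref{fig:Weak-FTE1} and \ref{fig:Weak-FTE2}.)

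The genuine gap is the one you name yourself: nothing in \eqref{weakcondition} forces $\tilde c_1(q)>0$ (this needs $\tfrac{a_1}{b_1}$ to exceed the hump height $\tfrac{a_2}{c_2(2-q)}\bigl(\tfrac{a_2(1-q)}{b_2(2-q)}\bigr)^{1-q}$, an extra restriction on $a_i,b_i,c_2,q$), nor does it force $\bar c_1(p)$ and $\tilde c_1(q)$ to lie strictly below $\tfrac{a_1b_2}{a_2}$; until those placements are established the two windows are not guaranteed to be nonempty, which is precisely why the statement remains a conjecture. Two smaller corrections: the ordering $\tilde c_1(q)<\bar c_1(p)$ you impose is not actually needed --- the first window is $\bigl(0,\min(\bar c_1(p),\tilde c_1(q))\bigr)$ and the second is $\bigl(\max(\bar c_1(p),\tilde c_1(q)),\tfrac{a_1b_2}{a_2}\bigr)$, so only positivity of $\tilde c_1(q)$ and the upper bounds matter; and at $c_1$ equal to a threshold there is a single tangential interior equilibrium, so ``none for $c_1\ge\bar c_1(p)$'' should read ``none for $c_1>\bar c_1(p)$.'' With the extra parametric condition isolated and the tangency case noted, your argument would upgrade the conjecture to a theorem in the ``there exist coefficients and $p,q$'' form; the sharp form stated in the paper would still require showing those conditions are compatible with, or implied by, \eqref{weakcondition} for the intended range of $p$ and $q$.
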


We provide some simulations next to elucidate the conjecture. 

%%%%%%%% Weak Competition FTE
\begin{figure}[!htb]
\begin{center}
\subfigure[]{
    \includegraphics[scale=.45]{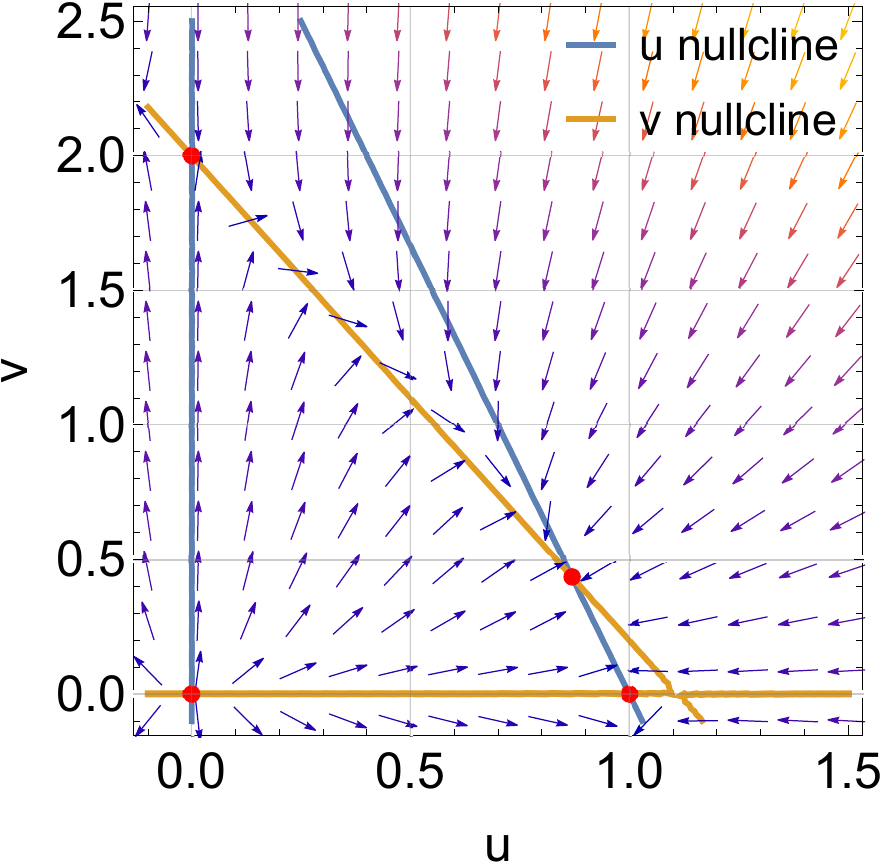}}
\subfigure[]{    
    \includegraphics[scale=.45]{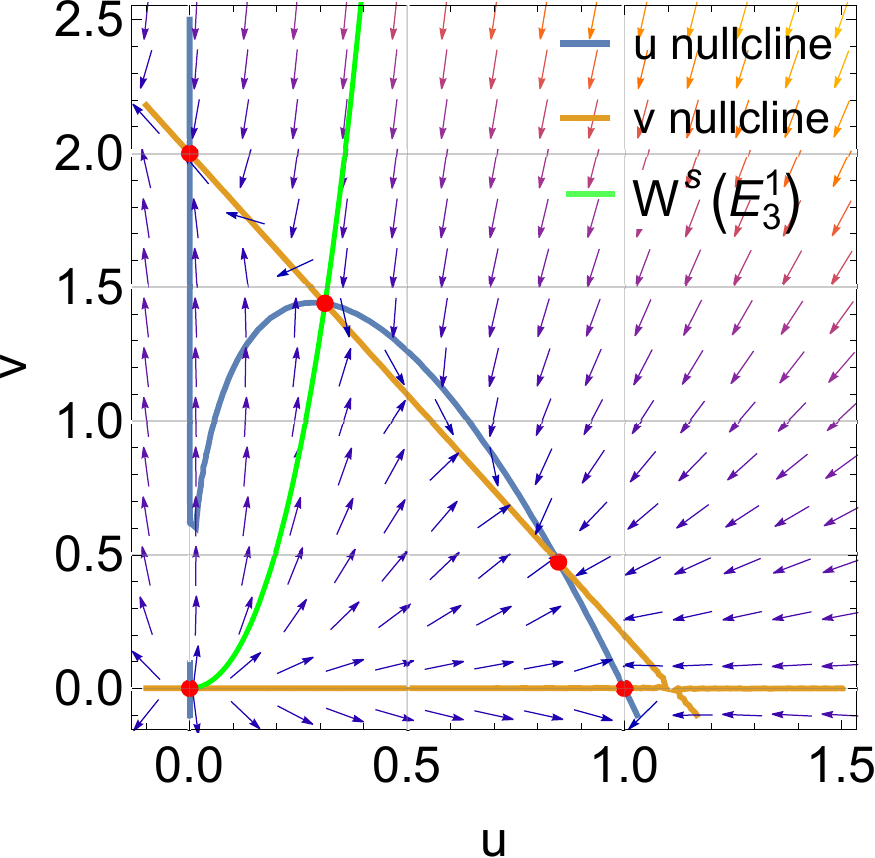}}
\subfigure[]{    
    \includegraphics[scale=.45]{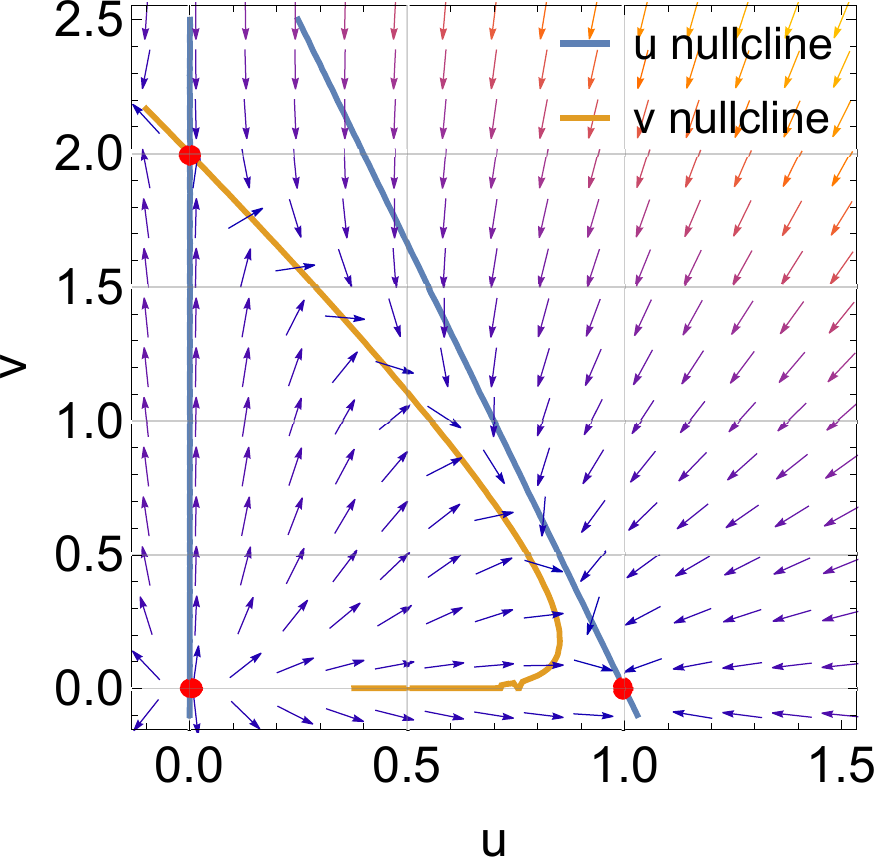}}    
\end{center}
 \caption{Weak competition case \eqref{weakcondition} of model \eqref{eq:Ge1} for $a_{1}=1,~a_{2}=2,~b_{1}=1,~b_{2}=1,~c_{1}=0.3,~c_{2}=1.8$. (a) classical case when $p=1,~q=1.$ (b)  FTE when  $p=0.6,~q=1.$ (c)  FTE when  $p=1,~q=0.9$.}
      \label{fig:Weak-FTE1}
\end{figure}

\begin{figure}[!htb]
\begin{center}
\subfigure[]{
    \includegraphics[scale=.45]{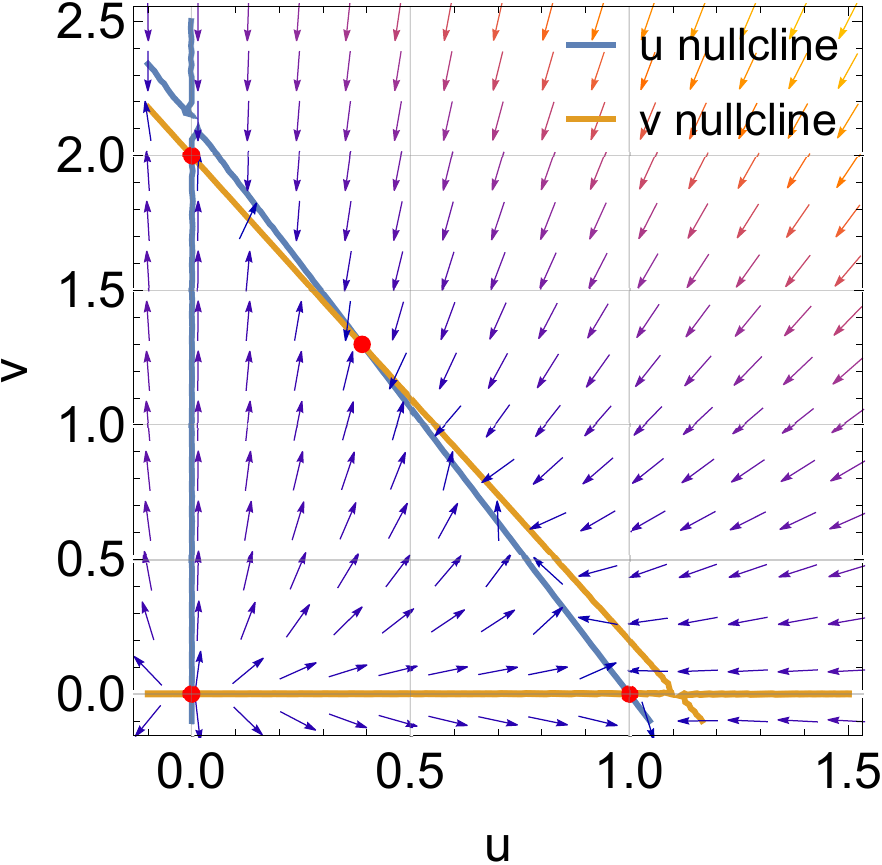}}
\subfigure[]{    
    \includegraphics[scale=.45]{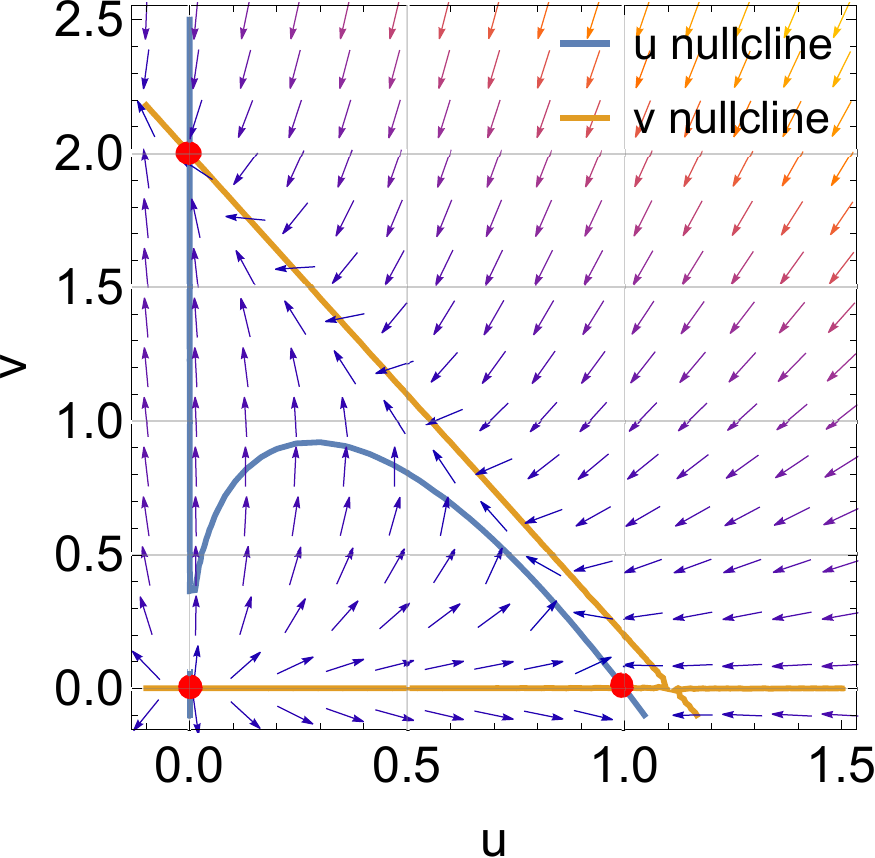}}
\subfigure[]{    
    \includegraphics[scale=.45]{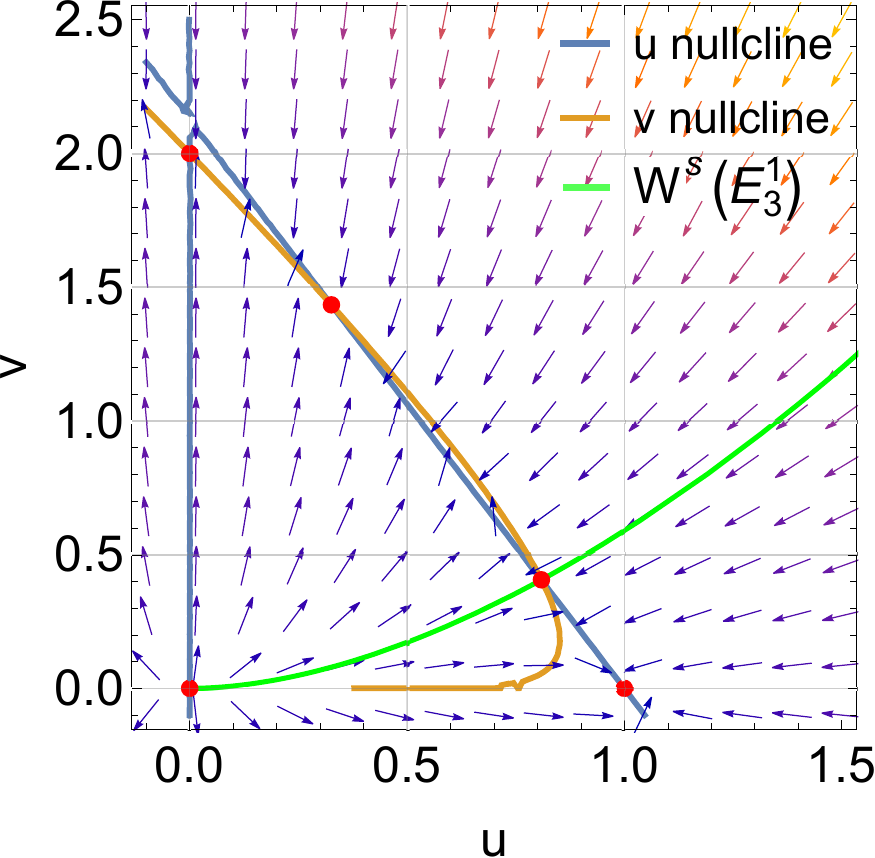}}    
\end{center}
 \caption{  Weak competition case \eqref{weakcondition} of model \eqref{eq:Ge1} for $a_{1}=1,~a_{2}=2,~b_{1}=1,~b_{2}=1,~c_{1}=0.47,~c_{2}=1.8$. (a) classical case when $p=1,~q=1.$ (b)  FTE when  $p=0.6,~q=1.$ (c)  FTE when  $p=1,~q=0.9$.}
      \label{fig:Weak-FTE2}
\end{figure}

%%%%%%%%  Extinction(2)
%\begin{figure}[!htb]
%\begin{center}
%\subfigure[]{
%    \includegraphics[scale=.68]{nullcline3a.eps}}
%\subfigure[]{    
%    \includegraphics[scale=.68]{nullcline3b.eps}}
%\subfigure[]{    
%    \includegraphics[scale=.68]{nullcline3c.eps}}    
%\end{center}
% \caption{ Extinction case \eqref{extinctioncondition1}  for $a_{1}=0.5,~a_{2}=1,~b_{1}=1,~b_{2}=1,~c_{1}=0.6,~c_{2}=1.8$ (a) classical when $p=1,~q=1$ (b) FTE when $p=0.9,~q=1$  (c) FTE when $p=1,~q=0.6$.}
 %     \label{fig:Extinction-FTE1}
%\end{figure}

%%%%%%%%  Strong Competition
\begin{figure}[!htb]
\begin{center}
\subfigure[]{
    \includegraphics[scale=.44]{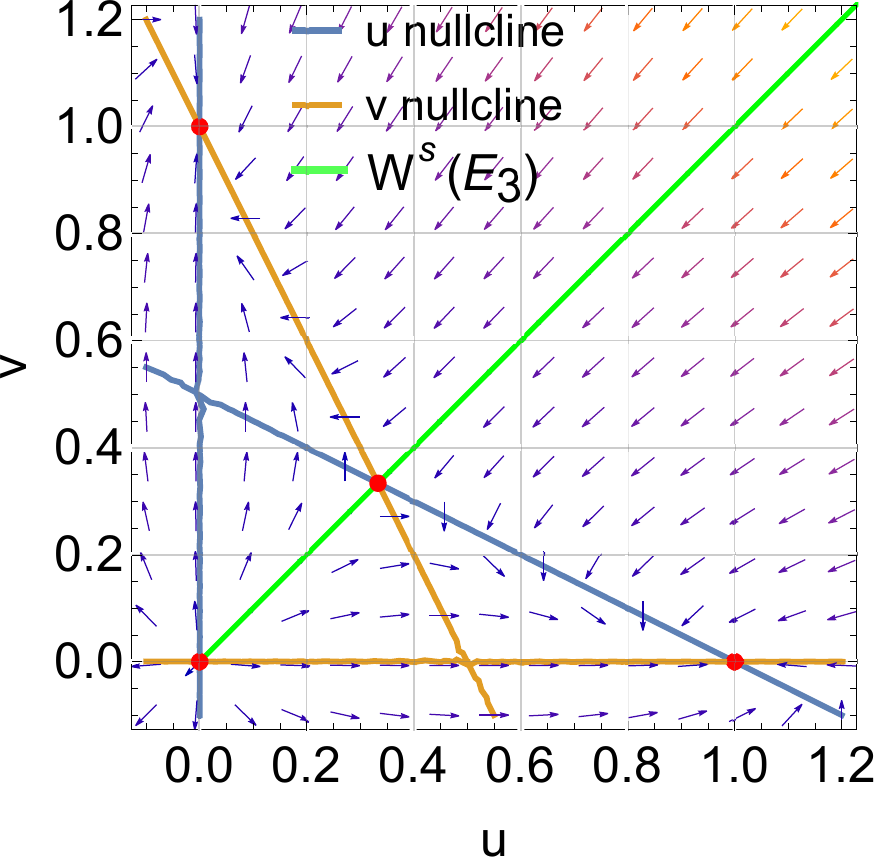}}
\subfigure[]{    
    \includegraphics[scale=.44]{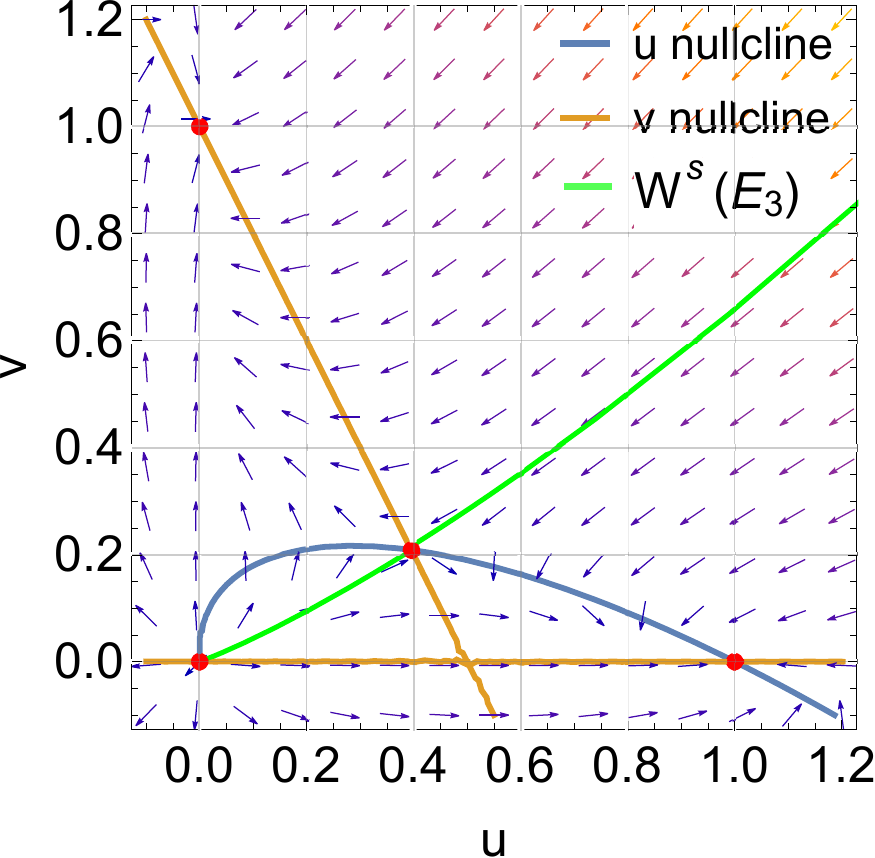}}
\subfigure[]{    
    \includegraphics[scale=.44]{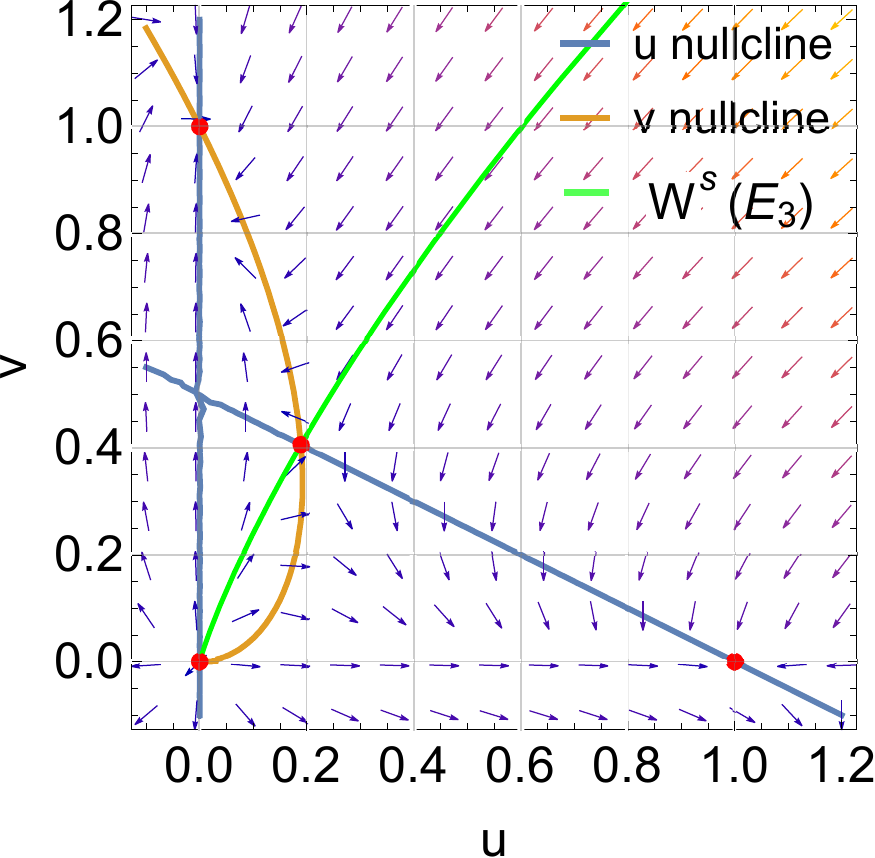}}
\end{center}
  \caption{ Strong competition case \eqref{eq:f1} of model \eqref{eq:Ge1} for $a_{1}=1,~a_{2}=1,~b_{1}=1,~b_{2}=1,~c_{1}=2,~c_{2}=2$ (a) classical when $p=1,~q=1$ (b) FTE when $p=0.6,~q=1$ (b) FTE when $p=1,~q=0.5$.}
      \label{fig:Strong-FTE1}
\end{figure}

\section{The PDE case}

\subsection{The case of strong competition}
The spatially explicit two species competition model has been intensely investigated \cite{Ninomiya1995, Cantrell2003,  Chen2020, DeAngelis2016, Lou2017, Mazari2020a, Liang2012, Yao2008, Hutson2001, Ni2011, Ni2020, Ni2012}.
We consider a generalized version
\begin{equation}
\label{eq:Ge1p}
\left\{ \begin{array}{ll}
\dfrac{\partial u }{\partial t} &~ =d_1 \Delta u +  a_{1}u- b_{1}u^{2} - c_{1}u^{p}v , 0 <p \leq 1, \\[2ex]
\dfrac{\partial  v}{ \partial t} &~ = d_2 \Delta v + a_{2} v- b_{2} v^{2} - c_{2} u v,
\end{array}\right.
\end{equation}

\begin{equation}
\label{eq:Ge1b}
\nabla  u \cdot n = \nabla  v \cdot n  = 0, on \ \partial \Omega\ , \ u(x,0) =  u_{0}(x) > 0, \  v (x,0) =  v_{0}(x) > 0,
\end{equation}
here we consider a bounded domain $\Omega \subset \mathbb{R}^{n}, \ n=1, 2$.  
%\begin{equation}
%\label{eq:Ge1bi}
%  u(x,0) =  u_{0}(x) > 0, \  v (x,0) =  v_{0}(x) > 0,
%\end{equation}
Under the strong competition setting, 
\begin{equation}
\label{eq:f1}
\frac{ b_{1}}{c_{2}}  < \frac{a_{1} }{a_{2}} <  \frac{ c_{1}}{b_{2}}. 
\end{equation}
When $p=1$, classical results show that in the absence of diffusion, there is a stable manifold of the saddle equilibrium (separatrix) denoted as $h$, that splits the phase space into 2 regions, $W_{B}$ the region above the separatrix - Note, for initial data $(u(x,0), v(x,0)) \in W_{B}$ $ \forall x \in \Omega$, the solution converges to $(0, \frac{a_{2}}{b_{2}})$. Likewise, $W_{A}$ is the region below the separatrix, and for initial data $(u(x,0), v(x,0)) \in W_{A}$ $ \forall x \in \Omega$, the solution converges to $( \frac{a_{1}}{b_{1}}, 0)$.
We recap a classical result from \cite{Masato1998, Ninomiya1995}, to this end.

\begin{theorem}[Diffusion induced extinction]
\label{thm:die}
 Let $(u, v)$ be a solution of \eqref{eq:Ge1p}-\eqref{eq:Ge1b}, and $p=q=1$. Suppose that $h^{''} \leq 0$. Then there exists initial data such that $(u(x,0), v(x,0)) \in W_{B}$ $ \forall x \in \Omega$, but the solution initiating from this data converges uniformly to $(\frac{a_{1}}{b_{1}},0)$.
\end{theorem}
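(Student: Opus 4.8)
The plan is to exploit the non-convexity of $W_B$ that the hypothesis $h''\le 0$ forces, and to couple it with the rapid homogenization of fine-scale spatial oscillations by the heat semigroup. Roughly: although every pointwise value of the chosen datum lies in $W_B$, a sufficiently oscillatory datum has its spatial mean in $W_A$, the solution relaxes to that mean before the reaction can move it, and from then on the classical recalled statement drives it to $(a_1/b_1,0)$.

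\emph{Step 1 (geometry).} First I would record the elementary fact behind the phenomenon: when $h''\le 0$ the set $W_A$ lying below $\mathrm{graph}(h)$ is convex while $W_B$ lying above it is not. Assuming $h$ is not globally affine (if it were, $W_B$ would be a half-plane and the statement could not hold), there are $u_1<u_2$ in the domain of $h$ and a weight $\theta\in(0,1)$ with $(1-\theta)h(u_1)+\theta h(u_2) < h\big((1-\theta)u_1+\theta u_2\big)$, i.e. the chord joining $Q_1=(u_1,h(u_1))$ and $Q_2=(u_2,h(u_2))$ dips a definite distance $3\delta>0$ below the graph at $Q_\theta:=(1-\theta)Q_1+\theta Q_2\in W_A$. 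Choosing $Q_1,Q_2$ in the open positive quadrant and then $P_1,P_2\in W_B$ with positive components and $|P_i-Q_i|<\delta$, the point $P_\theta:=(1-\theta)P_1+\theta P_2$ lies in the interior of $W_A$ with $B_\delta(P_\theta)\subset W_A$.

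\emph{Step 2 (data and homogenization).} Next I would take a fine checkerboard as initial datum: partition $\Omega$ into measurable sets $\Omega_1,\Omega_2$ with $|\Omega_1|=(1-\theta)|\Omega|$, $|\Omega_2|=\theta|\Omega|$, arranged so that each consists of many pieces of diameter $\le\ell$, and set $(u_0,v_0)=P_1$ on $\Omega_1$, $(u_0,v_0)=P_2$ on $\Omega_2$. Then $(u_0(x),v_0(x))\in\{P_1,P_2\}\subset W_B$ for every $x\in\Omega$, while the spatial averages satisfy $(\bar u_0,\bar v_0)=P_\theta$. The system is dissipative, so the solution and hence the reaction terms are bounded uniformly in $\ell$. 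Writing the solution via Duhamel against the Neumann heat semigroups $e^{t d_i\Delta}$, the deviation of the datum from its mean is damped because a pattern at scale $\ell$ is nearly orthogonal to the low Neumann eigenfunctions; combining this with the parabolic $L^2\to L^\infty$ smoothing and the $O(t)$ bounds both on the Duhamel contribution of the (bounded) reaction and on the drift of the averages, one obtains: for every $\eta>0$ there are $\ell$ small and a short time $\tau=\tau(\ell)>0$ with $\|u(\cdot,\tau)-\bar u_0\|_{L^\infty}+\|v(\cdot,\tau)-\bar v_0\|_{L^\infty}<\eta$. Taking $\eta=\delta$ gives $(u(x,\tau),v(x,\tau))\in B_\delta(P_\theta)\subset W_A$ for all $x\in\Omega$.

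\emph{Step 3 (conclusion) and the main obstacle.} Since the problem is autonomous, I would restart it at time $\tau$: the state $(u(\cdot,\tau),v(\cdot,\tau))$ lies pointwise in $W_A$, so by the classical convergence result recalled above (data everywhere in $W_A$ forces uniform convergence to $(a_1/b_1,0)$) the solution, hence the original solution issuing from the $W_B$ datum, converges uniformly to $(a_1/b_1,0)$; this is the asserted diffusion-induced extinction of $v$. The only substantial step is the homogenization estimate of Step 2: one must quantify that a sufficiently fine oscillatory datum relaxes to its spatial mean in a time so short that neither the mean nor the $L^\infty$ profile has moved appreciably under the bounded reaction. This is standard — the spectral-gap picture for the Neumann Laplacian on $\Omega$, hypercontractivity of the heat semigroup, and a Gronwall estimate on the Duhamel remainder — but it is where all the $\varepsilon$-$\delta$ bookkeeping lives. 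A minor point to flag is the degenerate case $h''\equiv 0$, excluded in Step 1, where $W_B$ is convex and no such datum can exist.
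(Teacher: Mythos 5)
The paper does not actually prove this statement: Theorem \ref{thm:die} is explicitly a recap of a classical result, cited to Iida--Mimura--Ninomiya--Yanagida and Ninomiya, so there is no in-paper proof to compare against. Your mechanism --- concavity of $h$ makes $W_B$ non-convex, so one can place two points of $W_B$ whose convex combination sits a definite distance inside $W_A$; a fine two-valued datum then has pointwise values in $W_B$ but spatial mean in $W_A$; diffusion homogenizes the datum toward that mean before the (bounded) reaction can move it appreciably --- is exactly the mechanism of the cited classical proof, and Steps 1 and 2 are sound modulo routine bookkeeping (the one technical caveat in Step 2 is that ``many pieces of diameter $\le \ell$'' is not enough; you need the pattern locally equidistributed, e.g.\ $\ell$-periodic with the correct area fractions per cell, so that $u_0-\bar u_0$ is nearly orthogonal to the low Neumann eigenfunctions).

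The genuine gap is Step 3. You restart at time $\tau$ with data pointwise in $W_A$ and invoke ``the classical convergence result recalled above'' to conclude uniform convergence to $(a_1/b_1,0)$. But the paper's preamble only asserts that $W_A$ and $W_B$ are basins for the \emph{kinetic} system (``in the absence of diffusion''); the statement ``data pointwise in a kinetic basin forces the PDE solution into the corresponding equilibrium'' is precisely the kind of claim that Theorem \ref{thm:die} itself refutes for $W_B$, so you cannot borrow it for $W_A$ without argument. It does hold for $W_A$, but for a nontrivial reason --- $h''\le 0$ makes $W_A$ convex, and a convex, kinetically invariant region is invariant for the reaction--diffusion system --- and even that invariant-region step is delicate when $d_1\neq d_2$, since the Chueh--Conley--Smoller theorem then only admits regions bounded by coordinate hyperplanes, not a curved separatrix; one further needs to rule out other attractors inside $W_A$. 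This is the load-bearing half of the cited classical analysis, and the standard way to close it (used implicitly by the paper, which works on a tiny domain) is instead the Conway--Hoff--Smoller large-diffusion asymptotics: for $\min(d_1,d_2)\lambda_1$ large relative to the Lipschitz constant of the kinetics, the solution tracks the ODE trajectory of its spatial average uniformly for all time, and that trajectory starts in $W_A$ and converges to $(a_1/b_1,0)$. That route avoids the $W_A$-invariance issue entirely but restricts the conclusion to the large-diffusion/small-domain regime; your route would give the result for all $d_1,d_2>0$ but only if you actually supply the $W_A$ half. As written, Step 3 assumes the hardest part of the theorem.
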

This can change when the FTE dynamic is present. 

\begin{remark}
Note, the FTE dynamic could hinder well posedness due to the non-smooth term $u^{p}$, $0<p<1$, in \eqref{eq:Ge1p}. Two species semi-linear reaction diffusion systems have been considered in \cite{BS02}, where there are non-smooth terms in \emph{one} of the equations - such as in our case. 
The key tool used to show existence of bounded global in time, classical solutions, is a weak comparison principle method \cite{BS02}. This is for the dirichlet boundary condition however. Recently such problems have also been investigated in the case of more complicated boundary conditions, \cite{KF18}. In general, there could be data that lead to non-unique solutions,  however, for certain given data, one has weak/classical solutions to the class of problems considered herein \cite{BV03}, even for the neuman problem. Our goal is not to demonstrate well (or ill) posedness here, more to focus on the dynamical changes that the $0<p<1$ can bring about, and the many ecological consequences therein.
\end{remark}

We state and prove the following result,

\begin{theorem}[Finite time extinction induced recovery]
 \label{thm:FTE-ESTIMATE}
 Consider \eqref{eq:Ge1p}-\eqref{eq:Ge1b}, under the strong competition case \eqref{eq:f1}, when $p=1$, and initial data $(u_{0}(x), v_{0}(x)) \in W_{B}$ $ \forall x \in \Omega$, that converges uniformly to $( \frac{a_{1}}{b_{1}}, 0)$, for some $d_{2}, d_{1 } > 0$. Then $\exists$ $p <1$, 
 s.t solutions to \eqref{eq:Ge1p}-\eqref{eq:Ge1b}, from the same 
initial data $(u_{0}(x), v_{0}(x)) \in W_{B}$ converge uniformly to $(0, \frac{a_{2}}{b_{2}})$, as long as equations \eqref{eq:cond1}-\eqref{eq:cond123} hold.
\end{theorem}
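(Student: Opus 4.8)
The plan is to transfer the finite‑time extinction mechanism from the ODE case (Theorem \ref{thm:FiniteTimeTheorem}) to the PDE setting by a spatial averaging / comparison argument. First I would integrate the $u$‑equation of \eqref{eq:Ge1p} over $\Omega$. Writing $\bar u(t) = \frac{1}{|\Omega|}\int_\Omega u\,dx$ and similarly $\bar v(t)$, the Neumann condition \eqref{eq:Ge1b} kills the Laplacian term, so that $\frac{d\bar u}{dt} = a_1 \bar u - \frac{b_1}{|\Omega|}\int_\Omega u^2 - \frac{c_1}{|\Omega|}\int_\Omega u^p v$. By Jensen's inequality $\int_\Omega u^2 \ge |\Omega|\,\bar u^2$, and since $p<1$ the map $u\mapsto u^p$ is concave, so $\int_\Omega u^p v$ needs a lower bound in terms of $\bar u^p$ and a lower bound on $v$. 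The key preliminary step, therefore, is a \emph{pointwise} lower bound on $v$: exactly as in the proof of Theorem \ref{thm:FiniteTimeTheorem}, one shows that once $u \le a_1/b_1$ uniformly in $x$ (which holds for $t$ large, or from the start if $u_0 \le a_1/b_1$, by the parabolic comparison principle against the logistic equation $w' = a_1 w - b_1 w^2$), the $v$‑equation obeys $\partial_t v \ge d_2\Delta v - b_2 v^2 - \frac{a_1 c_2}{b_1} v$, and comparing with the spatially homogeneous subsolution of the corresponding ODE gives $v(x,t) \ge \kappa e^{-\frac{a_1 c_2}{b_1} t}$ for all $x$, with $\kappa$ depending on $\min_\Omega v_0$.

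The second block of steps mirrors the ODE computation at the level of $\bar u$. Using $u^p v \ge u^p \cdot \kappa e^{-\lambda t}$ with $\lambda = a_1 c_2/b_1$, and Jensen once more ($\int_\Omega u^p \ge |\Omega| \bar u^p$ is false for concave $u^p$ — rather $\int_\Omega u^p \le |\Omega|\bar u^p$, so I must instead bound from below by a different device: e.g. use that $u \le a_1/b_1$ implies $u^p \ge (b_1/a_1)^{1-p} u$, converting the superlinear‑in‑smallness term back to a linear one). That yields $\frac{d\bar u}{dt} \le a_1\bar u - c_1 \kappa (b_1/a_1)^{1-p} e^{-\lambda t}\,\bar u$ — but this is multiplicative in $\bar u$ and only forces exponential decay, not finite‑time extinction. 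To genuinely capture FTE I should instead keep the $u^p$ structure: divide the $\bar u$ ODE‑inequality by $\bar u^p$ after first establishing, via the comparison principle applied to the full scalar problem $\partial_t u \le d_1\Delta u + a_1 u - c_1 \kappa e^{-\lambda t} u^p$, that $u$ itself is bounded above by the spatially homogeneous supersolution $U(t)$ solving $U' = a_1 U - c_1\kappa e^{-\lambda t} U^p$, $U(0) = \|u_0\|_\infty$. Then the scalar ODE analysis of Theorem \ref{thm:FiniteTimeTheorem} applies verbatim to $U$: $U^{1-p}$ satisfies a linear ODE with integrating factor $e^{-(1-p)a_1 t}$, and $U$ hits zero in finite time $T^*$ provided $\|u_0\|_\infty^{1-p} < c_1\big(\tfrac{(1-p)b_1 \kappa}{a_1 c_2 + (1-p)a_1 b_1}\big)$, i.e. provided $\kappa = \kappa(\min_\Omega v_0, b_2,\dots)$ is large enough relative to $\|u_0\|_\infty$. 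This inequality, together with the requirement that the logistic comparison already forces $u \le a_1/b_1$ on the relevant time window, is precisely what I expect equations \eqref{eq:cond1}--\eqref{eq:cond123} to encode.

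Once $u \equiv 0$ on $\Omega$ for $t \ge T^*$, the $v$‑equation becomes the scalar logistic diffusion equation $\partial_t v = d_2\Delta v + a_2 v - b_2 v^2$ with Neumann data and strictly positive initial profile $v(\cdot,T^*)$; by the classical theory for the scalar Fisher--KPP/logistic Neumann problem, $v(x,t) \to a_2/b_2$ uniformly. Hence the solution converges uniformly to $(0,a_2/b_2)$, which is the claimed diffusion/FTE‑induced recovery, in contrast to Theorem \ref{thm:die}. I would close by noting (as in the preceding remark) that although $u^p$ is non‑smooth, the comparison arguments only require sub/supersolution inequalities, which remain valid; and that the convergence to $(a_1/b_1,0)$ in the hypothesis for $p=1$ is used only to guarantee that, after some time, $(u,v)$ enters a neighborhood where $u \le a_1/b_1$ and $v$ is bounded below, so that the FTE estimate can be launched from a suitable later time rather than from $t=0$.

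The main obstacle is the concavity mismatch flagged above: spatial averaging interacts badly with the concave nonlinearity $u^p$ (Jensen goes the ``wrong way''), so the clean route is \emph{not} to average the $u$‑equation but to control $u$ pointwise by a spatially homogeneous scalar supersolution $U(t)$ and run the ODE FTE argument on $U$. Making that supersolution comparison rigorous for the non‑Lipschitz right‑hand side $c_1\kappa e^{-\lambda t}U^p$ — in particular checking that $U$ stays nonnegative and that the comparison principle of \cite{BS02, BV03} applies up to the extinction time — is the delicate point, along with bookkeeping the constants so that the single inequality $\|u_0\|_\infty^{1-p} < c_1\big(\tfrac{(1-p)b_1\kappa}{a_1 c_2+(1-p)a_1 b_1}\big)$ is simultaneously compatible with the $W_B$ membership and with the lower bound $\kappa$ extracted from $v_0$; these compatibility constraints are what \eqref{eq:cond1}--\eqref{eq:cond123} must spell out.
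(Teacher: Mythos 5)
Your proposal is correct in substance but takes a genuinely different technical route from the paper. The paper's proof short-circuits the PDE entirely: it asserts that in the constant-coefficient case solutions are spatially homogeneous (citing \cite{Masato1998, Ninomiya1995}), reduces \eqref{eq:Ge1p}--\eqref{eq:Ge1b} to the kinetic system, reuses the ODE finite-time-extinction estimate of Theorem \ref{thm:FiniteTimeTheorem} verbatim to obtain the threshold curve $f_1$, and then closes with a purely geometric separatrix argument: concavity of $h$ places it above the chord $f_2(u_0)=\left(\tfrac{c_2a_1-b_1a_2}{c_1a_2-a_1b_2}\right)u_0$ joining $(0,0)$ to $(u^*,v^*)$, the separatrix $h_1$ for $p<1$ is lowered, and the sandwich \eqref{eq:cond1} together with $u_0\le u^*$ in \eqref{eq:cond12} and the compatibility condition \eqref{eq:cond123} (which makes $f_1\le f_2$ possible on $[0,u^*]$) guarantees the data sits above $f_1$, hence above $h_1$, so $u$ undergoes FTE. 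You instead keep the diffusion terms and run genuine parabolic comparison: a spatially constant subsolution gives the pointwise bound $v\ge\kappa e^{-\lambda t}$, a spatially constant supersolution $U(t)$ dominates $u$, and the ODE FTE computation is applied to $U$. Your route buys rigor exactly where the paper is weakest: the blanket claim of spatial homogeneity is questionable for the heterogeneous data that drives diffusion-induced extinction in Theorem \ref{thm:die}, and your observation that Jensen goes the wrong way for the concave $u^p$ (so one must compare pointwise rather than average) is the right repair; you also correctly isolate the comparison principle for the non-Lipschitz term as the delicate point, consistent with \cite{BS02, BV03}. What the paper's route buys is the explicit form of the hypotheses: your smallness condition $\|u_0\|_\infty^{1-p}<c_1\tfrac{(1-p)b_1\kappa}{a_1c_2+(1-p)a_1b_1}$ is a uniform-norm analogue of the left inequality in \eqref{eq:cond1}, but you do not recover the upper bound $v_0\le f_2(u_0)$ or the restriction $u_0\le u^*$, whose role is to tie the data back to the separatrix geometry and to the hypothesis that the same data exhibits diffusion-induced extinction at $p=1$; to match the theorem as stated you would still need the (routine) bookkeeping that \eqref{eq:cond1}--\eqref{eq:cond123} imply your condition with $\kappa$ determined by $\min_\Omega v_0$.
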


\begin{figure}[hbt!] 
\begin{center}
\subfigure[]{
    \includegraphics[scale=.31]{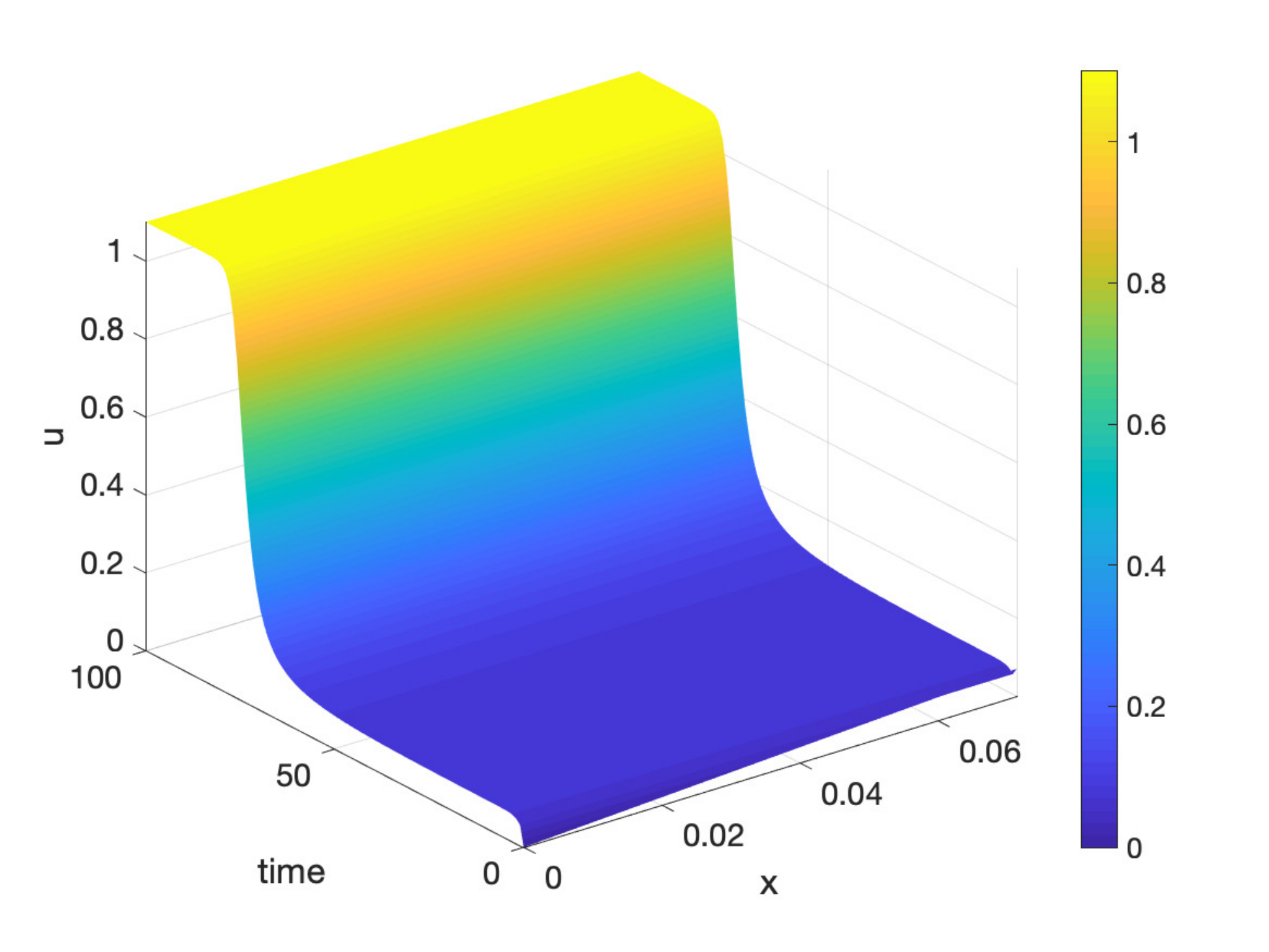}}
\subfigure[]{    
    \includegraphics[scale=.31]{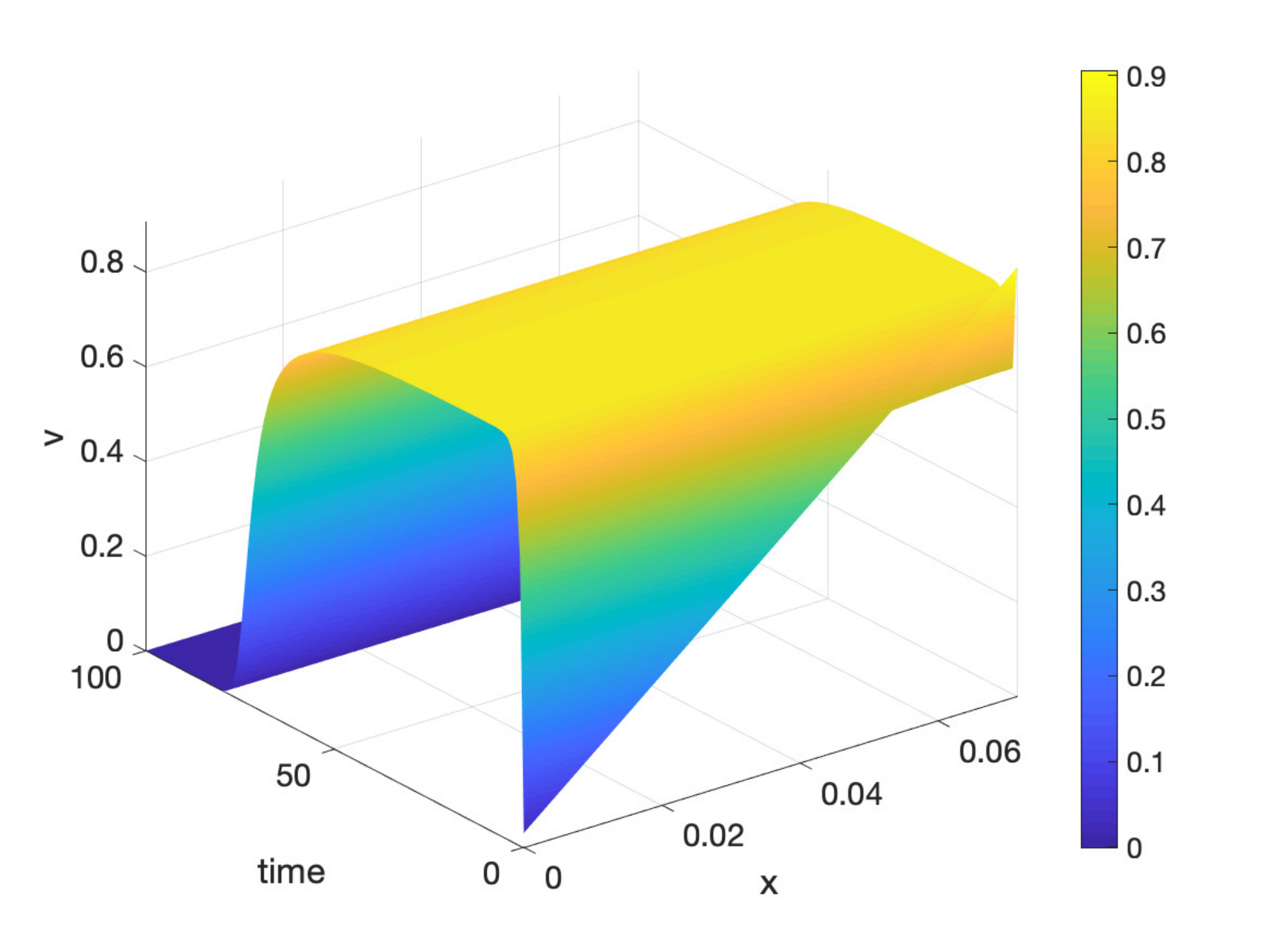}}
\end{center}
%\centering
%\begin{tabular}{cc}
%\hspace{-3.3cm}
%\includegraphics[scale=0.45]{uu1.eps} &
%\includegraphics[scale=0.45]{vv1.eps} \\
%(a) & (b) 
%\end{tabular}
\caption{Diffusion induced extinction is seen. Here we consider parameters  $a_1=1.1, b_1=1, c_1=1.2, a_2=1, b_2=1, c_2=2, p=1, d_1=1, d_2=0.001$ for  \eqref{eq:Ge1p}-\eqref{eq:Ge1b}.  We choose $\Omega= [0, 0.071429]$. The initial data is chosen as per the estimates of Theorem \ref{thm:FTE-ESTIMATE}, see Fig. \ref{fig:estimates}.}
\label{fig:Est2}
\end{figure}

\begin{figure}[hbt!] 
\begin{center}
\subfigure[]{
    \includegraphics[scale=.31]{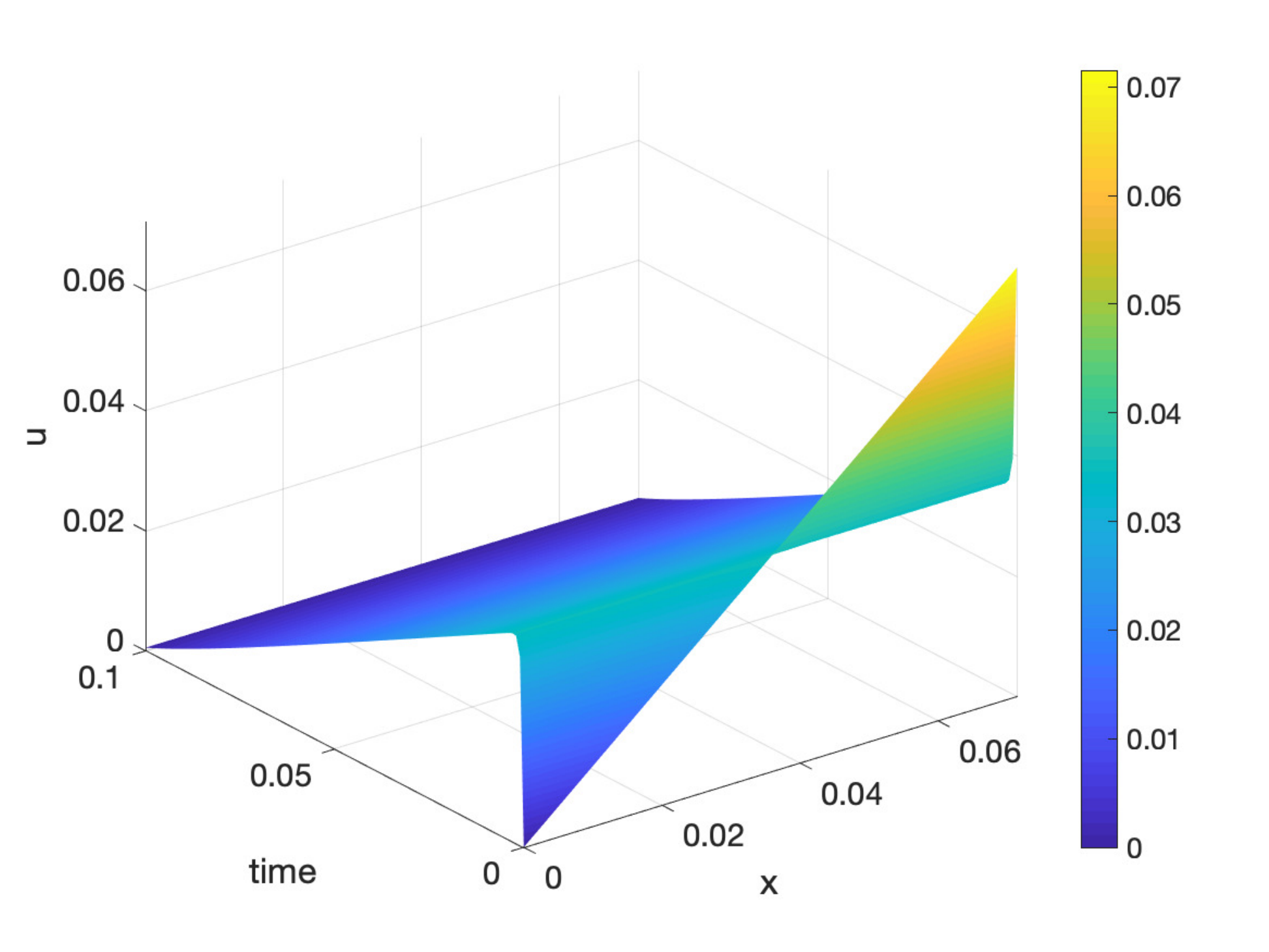}}
\subfigure[]{    
    \includegraphics[scale=.31]{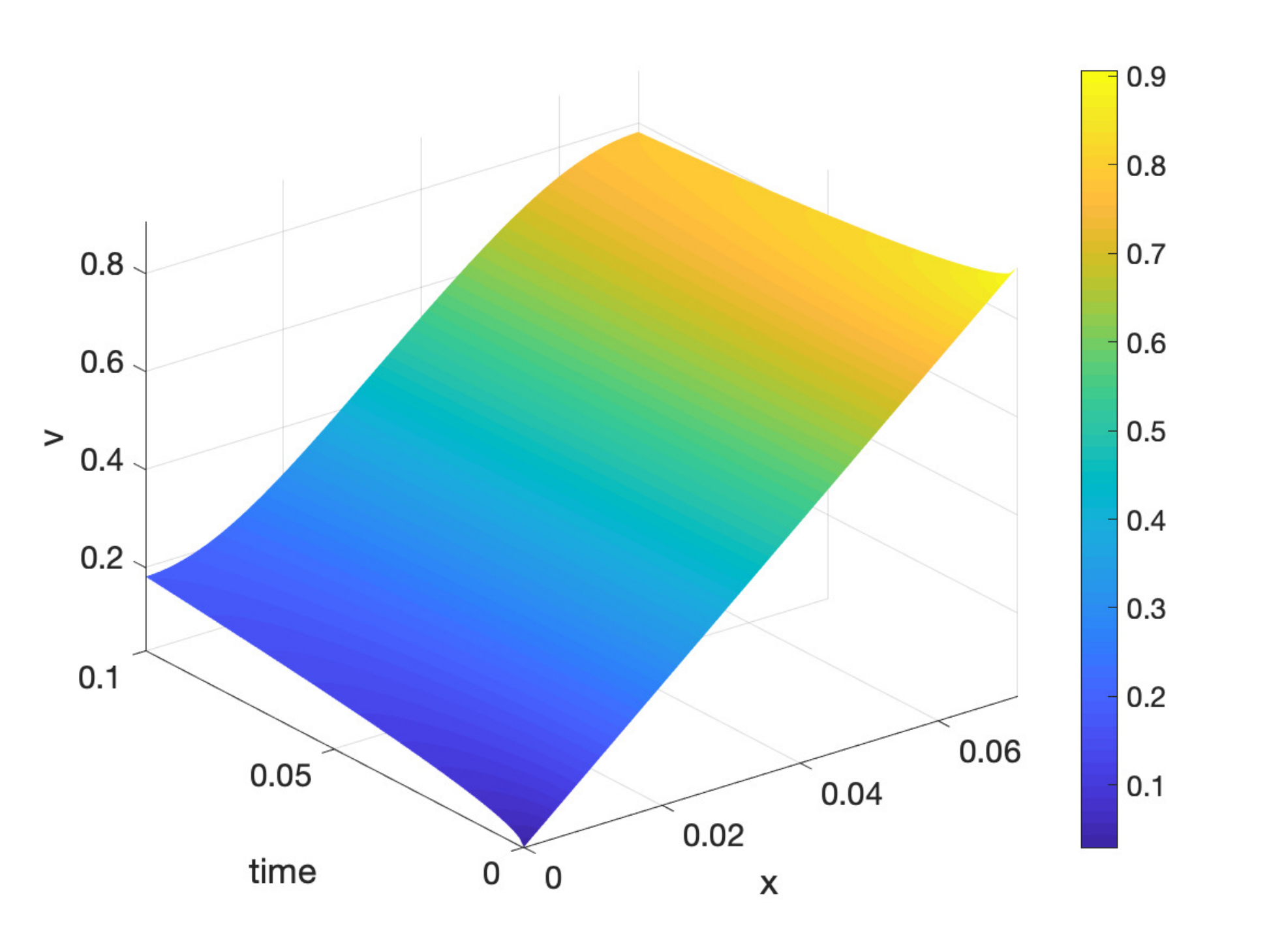}}
\end{center}
%\begin{figure}[hbt!] 
%\centering
%\begin{tabular}{cc}
%\hspace{-3.3cm}
%\includegraphics[scale=0.45]{uu2.eps} &
%\includegraphics[scale=0.45]{vv2.eps} \\
%(a) & (b) 
%\end{tabular}
\caption{Finite time induced recovery of superior species. We choose  $a_1=1.1, b_1=1, c_1=1.2, a_2=1, b_2=1, c_2=2, p=0.1, d_1=1, d_2=0.001$ and $\Omega= [0, 0.071429]$ for  \eqref{eq:Ge1p}-\eqref{eq:Ge1b}. The initial data is chosen as per the estimates of Theorem \ref{thm:FTE-ESTIMATE}, see Fig. \ref{fig:estimates}.}
\label{fig:Est3}
\end{figure}

\begin{proof}

%We focus on the following equation
%
%\begin{equation}
%\label{eq:Ge1pn11}
%\left\{ \begin{array}{ll}
%\dfrac{\partial u }{\partial t} &~ = d_{1}\Delta u +  a_{1}u - b_{1}u^{2} - c_{1}u^{p}v ,\\[2ex]
%\dfrac{\partial  v}{ \partial t} &~ =  d_{2}\Delta v + a_{2} v -  b_{2}v^{2} - c_{2} uv    ,
%\end{array}\right.
%\end{equation}
%under the strong competition case that is
%
%\begin{equation}
%\label{eq:f1}
%\frac{ b_{1}}{c_{2}}  < \frac{a_{1} }{a_{2}} <  \frac{ c_{1}}{b_{2}} 
%\end{equation}

For the constant coefficient case as we are dealing with herein, solutions are spatially homogeneous \cite{Masato1998, Ninomiya1995}, thus our system is reduced to

\begin{equation}
\label{eq:Ge1pno}
\left\{ \begin{array}{ll}
\dfrac{\partial u }{\partial t} &~ =    a_{1}u - b_{1}u^{2} - c_{1}u^{p}v ,\\[2ex]
\dfrac{\partial  v}{ \partial t} &~ =   a_{2} v -  b_{2}v^{2} - c_{2} uv .
\end{array}\right.
\end{equation}

%from the $v$ equation we have via comparison, that if $u_{0}(x) \leq \frac{a_{1}}{b_{1}}$,  then $||u||_{\infty} \leq  \frac{a_{1}}{b_{1}}, \forall t$,
%and so
%\begin{equation}
%\dfrac{\partial  v}{ \partial t} \geq   -  b_{2}v^{2} - c_{2} \frac{a_{1}}{b_{1}}v,
%\end{array}\right.
%\end{equation}
%
%Thus 
%
%\begin{equation}
%v(x,t) \geq v_{0}(x) C_{2}e^{-C_{1} t}
%\end{equation}
%
%Now we have 
%
%\begin{equation}
%\dfrac{\partial  u}{ \partial t} \leq   a_{1}u  - c_{1}v_{0}(x) e^{-C_{1} t}u^{p}    ,
%\end{array}\right.
%\end{equation}
%
%Here we have assume WLOG that $C_{2} \leq 1$ .
%Setting $u(x,t)=w(x,t) e^{a_{1} t}$, note $u(x,0) = w(x,0)$ we obtain,
%
%\begin{equation}
%\dfrac{\partial  w}{ \partial t} \leq    - c_{1}v_{0}(x) e^{-(C_{1} + 1 - p) t}w^{p},
%\end{array}\right.
%\end{equation}

Standard estimates as in theorem \ref{thm:FiniteTimeTheorem}, yield the finite time extinction of $u$ for initial data chosen s.t.

\begin{equation}
\left(\dfrac{a_{1} c_{2}+(1-p)a_{1} b_{1}}{(1-p) c_{1} b_{1}}\right) (u_{0}(x))^{1-p} = f_{1}(u_{0}) \leq v_{0}(x).
\end{equation}

Note, analysis of the ODE/kinetic system, via a simple modification of Lemma \ref{lem:l1}, see Fig. \ref{fig:Extinction-FTE1}, clearly shows that when $p<1$, the interior equilibrium is lowered, and so is the separatrix. We refer to the separatrix for the $0<p<1$ case as $h_{1}$.
Now consider when $p=1$, initial data $(u_{0}(x), v_{0}(x)) \in W_{B}$, for which diffusion induced extinction occurs.
%at every point of $\Omega$, that converges uniformly to $(u^{*}, 0)$ in $\Omega$, for $d_{2} \geq d_{1 }>0$, but to $(0, v^{*})$ if $d_{1}=d_{2}=0$, that is for this data, diffusion induced extinction occurs. 
Since $(u(x,0), v(x,0)) \in W_{B}$, it lies above the separatrix $h$, and by the concavity assumption on $h$, $h$ lies above the line segment connecting $(0,0)$ and $(u^{*}, v^{*})$, which is given by the equation

\begin{equation}
v_{0}(x) = f_{2}(u_{0}) = \left(\frac{c_{2}a_{1} - b_{1}a_{2}}{c_{1}a_{2} - a_{1}b_{2}}\right) u_{0}(x).
\end{equation}

Thus if we choose $p$ s.t $h_{1}$ is lowered enough s.t $f_{1}(u_{0})< f_{2}(u_{0})$, for certain $u^{*}_{0}$, then there exists data $(u^{*}_{0}, v^{*}_{0}) \in W_{B}$ (for which diffusion induced extinction occurs if $p=1$), but that lies above the separatrix $h$, which in turn lies above $ f_{2}(u_{0}) $, which by the appropriate choice of $p < 1$ lies above $f_{1}(u_{0})$, which lies above $h_{1}$ - and so will converge uniformly to $(0, v^{*})$, and diffusion induced extinction does not occur, when $p < 1$. To this end it is sufficient that,

\begin{equation}
\label{eq:cond1}
f_{1}(u_{0}(x)) = \left(\dfrac{a_{1} c_{2}+(1-p)a_{1} b_{1}}{(1-p) c_{1} b_{1}}\right) (u_{0}(x))^{1-p} \leq v_{0}(x) \leq \left(\frac{c_{2}a_{1} - b_{1}a_{2}}{c_{1}a_{2} - a_{1}b_{2}}\right) u_{0}(x) = f_{2}(u_{0}(x))
\end{equation}
and,
\begin{equation}
\label{eq:cond12}
u_{0}(x) \leq u^{*} = \left(\frac{c_{1}a_{2} - a_{1}b_{2}}{c_{2}c_{1} - b_{1}b_{2}} \right).
\end{equation}

A sufficient parametric restriction for which the above is true is given by

\begin{equation}
\label{eq:cond123}
\left(\dfrac{a_{1} c_{2}+(1-p)a_{1} b_{1}}{(1-p) c_{1} b_{1}}\right)  \leq \left(\frac{c_{2}a_{1} - b_{1}a_{2}}{c_{1}a_{2} - a_{1}b_{2}}\right)  \left(\frac{c_{1}a_{2} - a_{1}b_{2}}{c_{2}c_{1} - b_{1}b_{2}} \right)^{p}.
\end{equation}

This proves the theorem.

\end{proof}

\begin{figure}[hbt!]
\begin{center}
    \includegraphics[scale=.17]{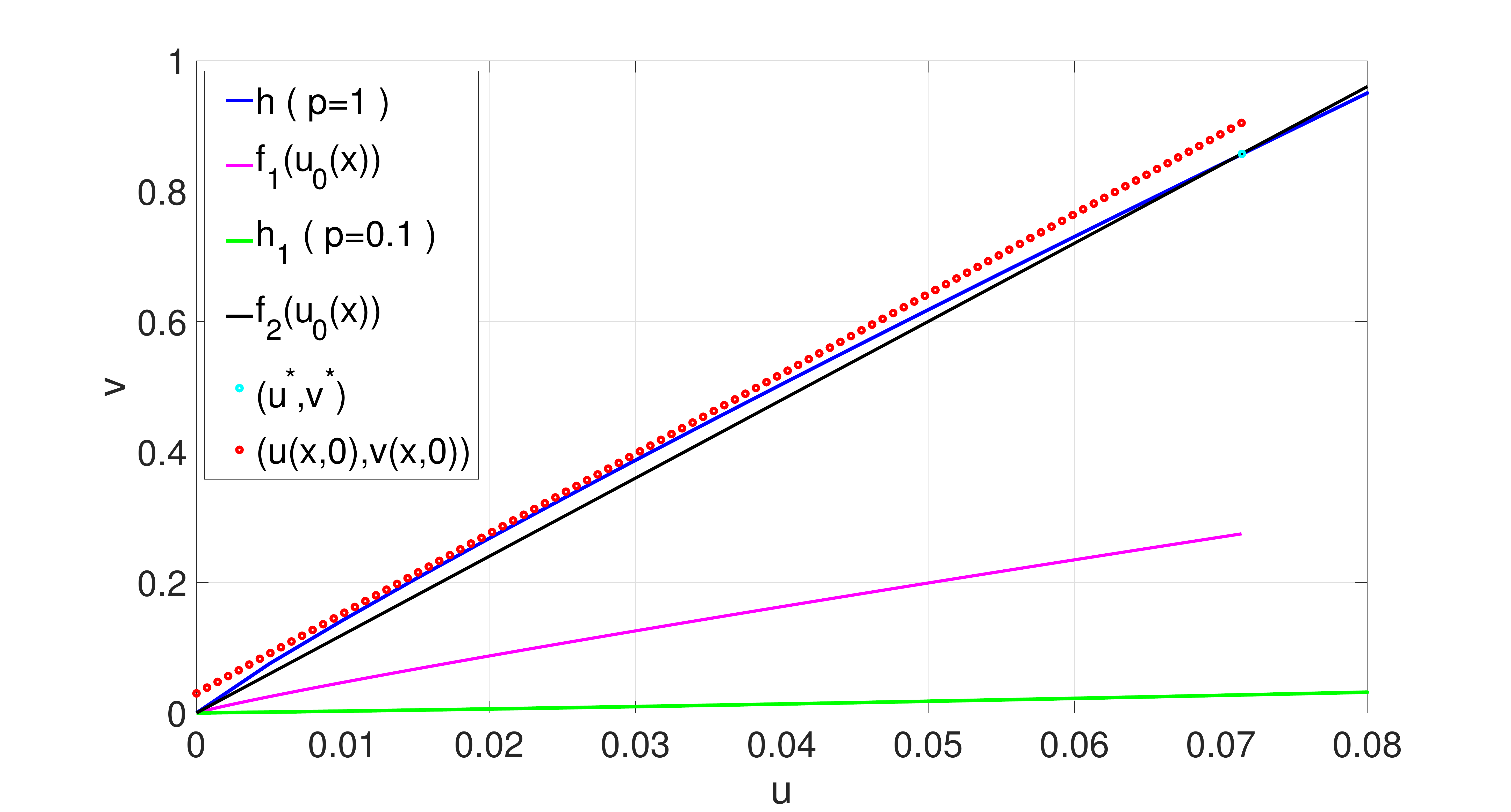}
\end{center}
%\centering
%\hspace{-1.7cm}
%\includegraphics[scale=0.25]{fte-estimates.eps} 
\caption{Plot validating theorem \ref{thm:FTE-ESTIMATE}. Parameters used are $a_1=1.1, b_1=1, c_1=1.2, a_2=1, b_2=1, c_2=2, p=0.1$. We choose $\Omega = [0, 0.071429]$. Here, $(u^*,v^*)=(0.071429,0.85714) $. The red dots are the data, that lie above the separatrix when $p=1$, we see diffusion induced extinction occur, that is we approach $(\frac{a_{1}}{b_{1}},0)$ in this case - see Fig. \ref{fig:Est2}. When $p<1$, the same data converges to $(0, \frac{a_{2}}{b_{2}})$, see Fig. \ref{fig:Est3}.}
\label{fig:estimates}
\end{figure}

\subsection{The Spatially Inhomogeneous Problem }

The spatially inhomogeneous problem has been intensely investigated in the past 2 decades \cite{Cantrell2003,  Chen2020, DeAngelis2016, He2019, He2013a, He2013b, He2016a, He2016b, Hutson2003, Lou2006a, Lou2006b, Liang2012, Li2019, Lam2012, Nagahara2018, Nagahara2020}. The premise here is that $u,v$ do not have resources that are uniformly distributed in space, rather there is a spatially dependent resource function $m(x)$. We consider again a normalized generalization of the classical formulation, where there are 2 parameters $b,c$ for inter/intra specific kinetics, as opposed to 6 from earlier. The parameter $p$, enables FTE in $u$.

\begin{equation}
\label{eq:Ge1pn}
\left\{ \begin{array}{ll}
\dfrac{\partial u }{\partial t} &~ = d_{1}\Delta u +  m(x)u - u^{2} - bu^{p}v ,   0 < p \leq 1, \\[2ex]
\dfrac{\partial  v}{ \partial t} &~ =  d_{2}\Delta v + m(x) v -  v^{2} - cuv    ,
\end{array}\right.
\end{equation}

\begin{equation}
\label{eq:Ge1bh}
\nabla  u \cdot n = \nabla  v \cdot n  = 0, on \ \partial \Omega\ , \ u(x,0) =  u_{0}(x) > 0, \  v (x,0) =  v_{0}(x) > 0.
\end{equation}
Note, $p=1$, is the classical case. We consider $m$ to be non-negative on $\Omega$, and bounded. We recap a seminal classical result \cite{Dockery1998, Hastings},

\begin{theorem}[Slower diffuser wins]
Consider \eqref{eq:Ge1pn}-\eqref{eq:Ge1bh}, when $b=c=p=1$, and $d_{1} < d_{2}$, solutions initiating from any positive initial data $(u_{0}(x), v_{0}(x))$ converge uniformly to $(u^{*}(x),0)$. 
\end{theorem}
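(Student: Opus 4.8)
The plan is to follow the classical variational / monotone-dynamical-systems argument of Dockery--Hutson--Mischaikow--Pernarowski \cite{Dockery1998} (see also \cite{Hastings}), which applies here essentially verbatim, since taking $b=c=p=1$ reduces \eqref{eq:Ge1pn}--\eqref{eq:Ge1bh} to the equal-kinetics competition system $u_{t}=d_{1}\Delta u+u(m(x)-u-v)$, $v_{t}=d_{2}\Delta v+v(m(x)-u-v)$ with Neumann data; throughout, one assumes (as is implicit in the statement) that $m\geq 0$ is non-constant. First I would collect the preliminaries: comparison with the scalar logistic equation $w_{t}=d\Delta w+w(m-w)$ yields global existence, positivity (strong maximum principle), and ultimate boundedness of solutions by $\|m\|_{\infty}$, so the generated semiflow is dissipative and compact; and since $\int_{\Omega}m\,dx>0$, the scalar steady problem $d\Delta w+w(m-w)=0$ has a unique positive solution, giving the two semitrivial equilibria $(u^{*},0)$ and $(0,v^{*})$ of the system (with $u^{*}=u^{*}_{d_{1}}$, $v^{*}=v^{*}_{d_{2}}$) in addition to $(0,0)$.

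Next I would carry out the linear stability analysis in terms of the principal eigenvalue $\lambda_{1}(d,q)$ of $-d\Delta-q(x)$ under Neumann conditions, variationally characterized by $\lambda_{1}(d,q)=\inf_{\phi\not\equiv 0}\frac{\int_{\Omega}\left(d|\nabla\phi|^{2}-q\phi^{2}\right)}{\int_{\Omega}\phi^{2}}$. Since $u^{*}>0$ solves $d_{1}\Delta u^{*}+(m-u^{*})u^{*}=0$, it is the principal eigenfunction, so $\lambda_{1}(d_{1},m-u^{*})=0$; likewise $\lambda_{1}(d_{2},m-v^{*})=0$. The envelope formula gives $\partial_{d}\lambda_{1}(d,q)=\frac{\int_{\Omega}|\nabla\phi_{d}|^{2}}{\int_{\Omega}\phi_{d}^{2}}\geq 0$, with strict inequality unless $q$ is constant; as $m$ is non-constant, so are $m-u^{*}$ and $m-v^{*}$, hence $d\mapsto\lambda_{1}(d,\cdot)$ is strictly increasing for these potentials. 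At $(u^{*},0)$ the $v$-equation linearizes to $d_{2}\Delta+(m-u^{*})$, with principal growth rate $-\lambda_{1}(d_{2},m-u^{*})<-\lambda_{1}(d_{1},m-u^{*})=0$ because $d_{1}<d_{2}$, while the $u$-equation linearizes to $d_{1}\Delta+(m-2u^{*})$, whose principal eigenvalue lies strictly below that of $d_{1}\Delta+(m-u^{*})$, namely $0$, since $m-2u^{*}<m-u^{*}$ pointwise; thus $(u^{*},0)$ is linearly, hence locally asymptotically, stable. Symmetrically, at $(0,v^{*})$ the $u$-equation linearizes to $d_{1}\Delta+(m-v^{*})$ with principal growth rate $-\lambda_{1}(d_{1},m-v^{*})>-\lambda_{1}(d_{2},m-v^{*})=0$, so $(0,v^{*})$ is unstable.

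Then I would rule out coexistence steady states. If $(u,v)$ were a positive steady state, set $\phi(x)=m(x)-u(x)-v(x)$; then $d_{1}\Delta u+\phi u=0$ and $d_{2}\Delta v+\phi v=0$ with $u,v>0$, forcing $\lambda_{1}(d_{1},\phi)=\lambda_{1}(d_{2},\phi)=0$. If $\phi$ is non-constant this contradicts the strict monotonicity of $d\mapsto\lambda_{1}(d,\phi)$; if $\phi$ is constant, then $-d_{1}\Delta u=\phi u$ with $u>0$ forces $\phi/d_{1}$ to be the principal Neumann eigenvalue of $-\Delta$, i.e. $\phi\equiv 0$, whence $u$ and $v$ are constant and $m=u+v$ is constant, a contradiction. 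Hence $(0,0)$, $(u^{*},0)$, $(0,v^{*})$ are the only equilibria. Finally, the system is competitive, so after one time step (parabolic smoothing plus the strong maximum principle) the semiflow is strongly order-preserving for the competitive cone $(u_{1},v_{1})\leq_{K}(u_{2},v_{2})\iff u_{1}\leq u_{2},\ v_{1}\geq v_{2}$; combined with dissipativity, this places us in the framework of the trichotomy for two-species competition (Hsu--Smith--Waltman; Hess--Lazer), which, given that there is no positive equilibrium, that $(0,v^{*})$ is unstable, and that $(u^{*},0)$ is stable, forces $(u^{*},0)$ to attract every solution with positive initial data, the convergence being uniform in $x$ by parabolic regularity.

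I expect the main obstacle to be this last step: verifying with care the hypotheses of the abstract monotone-systems machinery (strong monotonicity on the competitive cone, order-compactness, the dynamical role of the unstable manifold of $(0,v^{*})$) and ruling out that some trajectory limits on $(0,v^{*})$ or fails to converge. Intertwined with this is the non-constancy hypothesis on $m$, which is exactly what drives the strict eigenvalue monotonicity and therefore the entire conclusion; when $m$ is constant the two species are interchangeable, a continuum of equilibria $\{(s,m-s):0\leq s\leq m\}$ appears, and $(u^{*},0)$ is no longer globally attracting, so that degenerate case must be excluded (as it is in \cite{Dockery1998, Hastings}).
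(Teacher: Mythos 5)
Your proposal is correct, but note that the paper offers no proof of this statement at all: it is presented as a recap of a classical result with a citation to \cite{Dockery1998, Hastings}, and your argument (logistic comparison, strict monotonicity of the principal eigenvalue in the diffusion rate for non-constant potentials, nonexistence of coexistence states, and the competitive-order trichotomy) is a faithful reconstruction of exactly the proof in those cited references. So you take essentially the same approach as the paper's (implicit, by-citation) proof.
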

That is, the slower diffuser wins, in the case of equal kinetics.
However, a difference in the inter specific kinetics can cause the slower diffuser to \emph{loose}, depending on the initial conditions. We now state the following result in one spatial dimension,

\begin{theorem}[Slower diffuser can loose]
\label{thm:sdl}
Consider \eqref{eq:Ge1pn}-\eqref{eq:Ge1bh}, where $\Omega \subset \mathbb{R}$, when $b=c=p=1$, $d_{1} < d_{2}$. There exists positive initial data $(u_{0}(x), v_{0}(x))$, for which solutions converge to $(u^{*}(x),0)$, but solutions with the same diffusion coefficients, initiating from the same data, will converge to $(0, v^{*}(x))$ in finite time, for a sufficiently chosen $0< p < 1$.
\end{theorem}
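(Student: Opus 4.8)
The plan is to produce the ``slower diffuser can loose'' behaviour through a finite-time-extinction mechanism for $u$, implemented by spatially homogeneous super- and sub-solutions in the PDE, exactly as the kinetic argument of Theorem~\ref{thm:FiniteTimeTheorem}. The first half of the statement is essentially free: whatever positive data $(u_0,v_0)$ we end up selecting, when $p=1$ it converges to $(u^{*}(x),0)$ by the quoted ``slower diffuser wins'' theorem, which holds for \emph{all} positive data once $b=c=1$ and $d_1<d_2$. So the real task is to choose positive $(u_0,v_0)$ and some $0<p<1$ for which $u$ is driven to $0$ in finite time while $v$ is kept uniformly bounded below by a positive constant; after that, $u\equiv 0$ and $v$ solves the scalar logistic equation $v_t=d_2\Delta v+m v-v^2$, whence $v(\cdot,t)\to v^{*}(x)$ uniformly ($v^{*}>0$ since $\lambda_1(d_2\Delta+m)>0$, as $m\ge 0$, $m\not\equiv 0$).

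First I would fix a time window, say $[0,1]$, and extract two crude one-sided bounds. Because $m u-u^2-b u^{p}v\le\|m\|_\infty u$ for $u\ge 0$, the spatially constant function $\varepsilon e^{\|m\|_\infty t}$ is a supersolution of the $u$-equation of \eqref{eq:Ge1pn}, so $u_0\le\varepsilon\le e^{-\|m\|_\infty}$ forces $u(x,t)\le 1$ on $[0,1]$. With this, the $v$-equation satisfies $v_t\ge d_2\Delta v-v^2-c v$ on $[0,1]$ (using $m\ge0$ and $u\le1$), so comparing with the spatially constant sub-solution $\zeta(t)$ solving $\dot\zeta=-\zeta^2-c\zeta$, $\zeta(0)=1$, and taking $v_0\ge 1$, we get $v(x,t)\ge\zeta(t)\ge\zeta(1)=:\delta>0$ on $[0,1]$, where $\delta$ is an explicit positive constant depending only on $c$. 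This is the step that dissolves the apparent circularity between ``$u$ is small'' and ``$v$ is bounded below'': taking $u_0$ small makes $u$ stay small long enough that $v$ cannot fall close to $0$ on $[0,1]$.

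Next I would transplant the extinction estimate of Theorem~\ref{thm:FiniteTimeTheorem}. With $v\ge\delta$ on $[0,1]$, the scalar ODE $\dot z=\|m\|_\infty z-b\delta z^{p}$, $z(0)=\varepsilon$, provides a spatially homogeneous supersolution of the $u$-equation; the substitution $y=z^{1-p}$ linearises it to $\dot y=(1-p)\|m\|_\infty y-(1-p)b\delta$, so provided $\varepsilon^{1-p}<b\delta/\|m\|_\infty$ the function $z$ reaches $0$ at the explicit time $T^{*}(\varepsilon)=\frac{1}{(1-p)\|m\|_\infty}\ln\!\big(\frac{b\delta/\|m\|_\infty}{\,b\delta/\|m\|_\infty-\varepsilon^{1-p}}\big)$, and $T^{*}(\varepsilon)\to0$ as $\varepsilon\to0^{+}$. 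Fixing $p\in(0,1)$ and then taking $\varepsilon$ small enough that simultaneously $\varepsilon\le e^{-\|m\|_\infty}$, $\varepsilon^{1-p}<b\delta/\|m\|_\infty$ and $T^{*}(\varepsilon)<1$, comparison gives $0\le u(\cdot,t)\le z(t)$ on $[0,T^{*}]$, hence $u(\cdot,T^{*})\equiv0$; since $0$ is a supersolution of the $u$-equation, $u\equiv0$ for all $t\ge T^{*}$, and for $t\ge T^{*}$ the faster diffuser $v$ solves the scalar logistic equation with Neumann data $v(\cdot,T^{*})\ge\delta>0$ and converges uniformly to $v^{*}(x)$. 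Thus the solution tends to $(0,v^{*}(x))$, while for $p=1$ the same positive data (e.g.\ $u_0\equiv\varepsilon$, $v_0\equiv1$) tends to $(u^{*}(x),0)$; the constants are chosen in the order $\delta$ (from $c$), then $\varepsilon$ (from $p,\delta,\|m\|_\infty$), with no backward dependence.

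The main obstacle is legitimising these comparisons in the presence of the non-Lipschitz term $u^{p}$ at $u=0$. Each comparison used is \emph{one-sided}: on the set $\{u>\bar u\ge0\}$ one has $f(u)-f(\bar u)\le\|m\|_\infty(u-\bar u)$ with $f(s)=ms-s^2-b v s^{p}$, because $s\mapsto s^{p}$ and $s\mapsto s^{2}$ are increasing so the singular and quadratic terms contribute with the favourable sign; this is precisely the one-sided Lipschitz bound the scalar parabolic maximum principle needs, and it is consistent with the weak-comparison framework for such non-smooth reaction--diffusion systems cited earlier (cf.\ \cite{BS02, BV03}), and it also yields the forward invariance of $\{u\equiv0\}$ past $T^{*}$. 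The final ingredient, uniform convergence $v(\cdot,t)\to v^{*}$ for the scalar logistic problem, is classical. Notice that nothing in the $p<1$ argument uses $d_1<d_2$: that hypothesis only serves to make the conclusion genuinely counterintuitive, since then it is the \emph{slower} diffuser $u$ that is eliminated.
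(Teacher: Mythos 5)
Your proposal is correct, but it takes a genuinely different route from the paper's. The paper runs an $L^2$ energy argument: it multiplies the $u$-equation of \eqref{eq:Ge1pn} by $u$, integrates by parts, inserts a global-in-time lower bound $v(x,t)\ge C_1v_0(x)e^{-C_2t}$, and then combines the Gagliardo--Nirenberg--Sobolev inequality with Young's inequality to control $\bigl(\|u\|_2^2\bigr)^{\alpha}$, $0<\alpha<1$, by $\|\nabla u\|_2^2+\int_\Omega u^{1+p}\,dx$; this yields the differential inequality $\tfrac12\tfrac{d}{dt}\|u\|_2^2\le C_5\|u\|_2^2-C_4e^{-C_2t}\bigl(\|u\|_2^2\bigr)^{\alpha}$, whose solution vanishes in finite time for $\|u_0\|_2$ small. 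You instead work entirely with pointwise, spatially homogeneous barriers on a fixed window $[0,1]$: a crude supersolution keeps $u\le 1$ there, which gives the constant lower bound $v\ge\delta$, and then the kinetic computation of Theorem \ref{thm:FiniteTimeTheorem} (the substitution $y=z^{1-p}$) drives the barrier, hence $u$, to zero before the window closes. Your route buys three things: it delivers $u(\cdot,T^*)\equiv 0$ pointwise, thereby sidestepping the paper's assertion that $L^2$ convergence implies uniform convergence (which is false in general and is the weakest step of the paper's argument); it makes no use of the restriction $\Omega\subset\mathbb{R}$, which the paper needs only to arrange the GNS exponents; and it isolates exactly where the non-Lipschitz term $u^p$ matters, supplying the correct one-sided Lipschitz bound that legitimises the comparisons and the forward invariance of $\{u\equiv 0\}$. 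The paper's energy method, for its part, only requires $\|u_0\|_2$ (rather than $\|u_0\|_\infty$) to be small, and its exponential lower bound on $v$ holds for all time, so no window bookkeeping is needed. The only cosmetic mismatch is the order of quantifiers --- you fix $p$ and then shrink $\varepsilon$, while the statement names the data first --- but since both are existential this is harmless.
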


%That is the slower diffuser actually looses, for certain initial condition - and goes extinct in finite time.

\begin{proof}
Via comparison with the logistic equation \cite{Cantrell2003}, we see that $u \leq C m(x)$, $\forall x, t \in \Omega \times [0,\infty)$. Now from the equation for $v$ in \eqref{eq:Ge1pn}, we have,

\begin{equation}
\dfrac{\partial  v}{ \partial t}  =  d_{2}\Delta v + m(x) v -  v^{2} - uv \geq d_{2}\Delta v -  v^{2} - C||m||_{\infty}v,
\end{equation}

via comparison we have, 

\begin{equation}
\label{eq:v1}
v(x,t)  \geq  C_{1}v_{0}(x) e^{-C_{2}t}
\end{equation}

where $C_{1}, C_{2}$, are independent of $u$. We now multiply the $u$ equation in \eqref{eq:Ge1pn} by $u$ and integrate by parts to obtain

\begin{eqnarray}
\frac{1}{2} \dfrac{d}{dt} ||u||^{2}_{2} + d_{1} ||\nabla u||^{2}_{2} + \int_{\Omega} u^{1+p} v dx +  \int_{\Omega} u^{3}dx  &=&  \int_{\Omega}m(x)u^{2} dx.    \nonumber \\
\end{eqnarray}

Using the estimate on $v$ from \eqref{eq:v1} we obtain

\begin{eqnarray}
\frac{1}{2} \dfrac{d}{dt} ||u||^{2}_{2} + d_{1} ||\nabla u||^{2}_{2} + C_{3} e^{-C_{2}t}\int_{\Omega} u^{1+p}dx +  \int_{\Omega} u^{3}dx  &\leq&  \int_{\Omega}m(x)u^{2} dx.    \nonumber \\
\end{eqnarray}

Here $C_{3} = C_{1} \min{v_{0}(x)} > 0$, then It follows that,

\begin{equation}
\frac{1}{2} \dfrac{d}{dt} ||u||^{2}_{2} + \min(d_{1}, C_{3}) e^{-C_{2}t} \left( ||\nabla u||^{2}_{2} + \int_{\Omega} u^{1+p}dx \right) +  \int_{\Omega} u^{3}dx  \leq  \int_{\Omega}m(x)u^{2} dx.   \nonumber \\
\end{equation}

 Thus we have that,

\begin{equation}
\frac{1}{2} \dfrac{d}{dt} ||u||^{2}_{2} + C_{4} e^{-C_{2}t} \left( ||\nabla u||^{2}_{2} + \int_{\Omega} u^{1+p}dx \right) +  \int_{\Omega} u^{3}dx  \leq ||m(x)||_{\infty}||u||^{2}_{2} dx.    \nonumber \\
\end{equation}

Our goal is to show that 

\begin{equation}
 \left( || u||^{2}_{2}  \right) ^{\alpha}  \leq  C_{4}\left( ||\nabla u||^{2}_{2} + \int_{\Omega} u^{1+p}dx \right) 
\end{equation}

where $0< \alpha < 1$, then we will have the finite time extinction of $u$ in analogy with the ODE

\begin{equation}
\frac{dy}{dt} = C_{5}y- C_{4}e^{-C_{2}t}y^{\alpha}, 0< \alpha < 1, C_{2}, C_{4}, C_{5} > 0.
\end{equation}

Now recall the Gagliardo-Nirenberg-Sobolev (GNS) inequality \cite{Sell2013},

	\begin{equation}
	||\phi||_{W^{k,p^{'}}(\Omega)} \leq C ||\phi||^{\theta}_{W^{m,q^{'}}(\Omega)} ||\phi||^{1 - \theta}_{L^{q}(\Omega)} 
	\end{equation}
for $\phi \in W^{m,q}(\Omega)$ provided $p^{'}, q^{'}, q \geq 1, 0 \leq \theta \leq 1$, and

\begin{equation}
k - \frac{n}{p^{'}} \leq \theta \left(  m - \frac{n}{q^{'}} \right) - (1 - \theta) \frac{n}{q}.
\end{equation}

Now consider exponents s.t.

\begin{equation}
W^{k,p^{'}}(\Omega) = L^{2}(\Omega), \ W^{m,q^{'}}(\Omega) = W^{1,2}(\Omega), \ L^{q}(\Omega) = L^{1+p}(\Omega)
\end{equation}

for $0 < p < 1$.

This yields

\begin{equation}
\label{eq:uest}
	||u|_{L^{2}(\Omega)} \leq C ||\phi||^{\theta}_{W^{1,2}(\Omega)} ||\phi||^{1 - \theta}_{L^{q}(\Omega)}, 
	\end{equation}
	
 as long as

%	\begin{equation}
%	-\frac{1}{2} \leq \frac{\theta}{2} - \frac{ (1-\theta)}{q}
%	\end{equation}
%	
%or

	\begin{equation}
	\label{eq:1h}
	\frac{2-q}{2+q} \leq \theta \leq 1.
	\end{equation}

We raise both sides of \eqref{eq:uest} to the power of $l$	, $0<l<2$, to obtain

%\begin{equation}
%	||u|^{l}_{L^{2}(\Omega)} \leq C ||u||^{l \theta}_{H^{1}(\Omega)} ||\phi||^{l(1 - \theta)}_{L^{q}(\Omega)} 
%	\end{equation}
%
%We see that 
	
	\begin{equation}
	\left( \int_{\Omega} u^{2}dx \right)^{\frac{l}{2}} \leq C\left( \int_{\Omega} \nabla u^{2}dx \right)^{\frac{l \theta}{2}} \left( \int_{\Omega}  u^{q}dx \right)^{\frac{l (1-\theta)}{q}}.
	\end{equation}
	
	Using Young's inequality on the right hand side (for $a b \leq \frac{a^{r}}{r} + \frac{b^{m}}{m}$), with $r = \frac{2}{l \theta}, \ m= \frac{q}{l (1-\theta)}$, yields
	
	\begin{equation}
	\left( \int_{\Omega} u^{2}dx \right)^{\frac{l}{2}} \leq C \left( \int_{\Omega} \nabla u^{2}dx + \int_{\Omega}  u^{q}dx \right).
	\end{equation}
	
	We notice that given any $1<q<2$, it is always possible to choose $0<l<2$, s.t, $\frac{1}{r} + \frac{1}{m} = 1$,
	
	\begin{equation}
\frac{1}{r} + \frac{1}{m} = 	\frac{l \theta}{2}  + \frac{l (1-\theta)}{q} = 1,
	\end{equation}
	
	by choosing
	
	\begin{equation}
	\theta  = \frac{\frac{1}{l} - \frac{1}{q} }{\frac{1}{q} - \frac{1}{2} } = \frac{2(q-l)}{l(2-q)},
	\end{equation} 
	
	thus we need to choose $l$ s.t,
	
	\begin{equation}
	 \frac{2(q-l)}{l(2-q)} \geq \frac{2-q}{2+q}.
	\end{equation}

This enables the application of Young's inequality above, within the required restriction \eqref{eq:1h}, enforced by the GNS inequality.

%However, there exists a $0<p<1$ s.t 
% all solutions to \eqref{eq:Ge1p}-\eqref{eq:Ge1bi}, converge to $(0, v^{*}(x))$, for initial data 
% chosen according to \eqref{eq:}

%Consider initial data $(x_{1}(x,0), x_{2}(x,0) \in W_{B}$ at every point of $\Omega$, that converges uniformly to $B$ if $d_{1}=d_{2}=0$, but that converges uniformly to  $A$ in $\Omega$, for

Thus we have
\begin{equation}
\frac{1}{2} \dfrac{d}{dt} ||u||^{2}_{2} + C_{4} e^{-C_{2}t} \left(  ||u||^{2}_{2} \right)^{\frac{l}{2}}   \leq C_{5}||u||^{2}_{2} dx. \nonumber \\
\end{equation}
Let  $\alpha=\frac{l}{2} <1$ we have that $||u||^{2}_{2} \rightarrow 0$ as $t \rightarrow T^{*} < \infty$, for appropriately chosen initial data, in analogy with the ODE,

\begin{equation}
\frac{dy}{dt} = C_{5}y- C_{4}e^{-C_{2}t}y^{\alpha}, 0< \alpha < 1, C_{2}, C_{4}, C_{5} > 0.
\end{equation}

We set $y=g(t)e^{C_{5}t}$, to obtain

\begin{equation}
\label{eq:Ode}
  \frac{dg}{dt}=-C_{4}e^{-C_{6}t}(g(t))^{\alpha}, 0< \alpha < 1, C_{6}, C_{4}, C_{5} > 0.
\end{equation}

Solving eqn.(\ref{eq:Ode}) yields 
\begin{equation}
g(t)=\Bigg( \dfrac{(1-\alpha) C_4 e^{-C_6 t}}{C_6} + K \Bigg)^ \frac{1}{1-\alpha}, K~ \text{a~constant.}
\end{equation}

Here $K=(g_{0})^{(1-\alpha) } - \frac{((1-\alpha) )C_4}{C_6}$. Thus for initial data chosen s.t., $g(0) < \left(\frac{(1-\alpha)C_4}{C_6}\right)^{\frac{1}{1-\alpha}}$, then $g$ goes extinct at finite time $T^{*} = \ln \left(  \frac{C_{6}}{(1-\alpha)C_{4}-C_{6}(g_{0})^{1-\alpha } }\right)$, and so does $y(t)$. Thus we need to choose the initial data s.t. $||u_{0}||^{2}_{2} < \left(\frac{(1-\alpha)C_4}{C_6}\right)^{\frac{1}{1-\alpha}}$.
Since $L^{2}(\Omega)$ convergence implies uniform convergence on $\Omega$, which is closed and bounded, we see that for sufficiently chosen data $(u,v) \rightarrow (0,v^{*}(x))$ uniformly, and this occurs in finite time. However, if $p=1$, classical results \cite{Dockery1998}, would imply the same data would have converged to $(u^{*}(x),0)$.
This completes the proof.

\end{proof}

\newcommand{\overbar}[1]{\mkern 1.5mu\overline{\mkern-1.5mu#1\mkern-1.5mu}\mkern 1.5mu}

\subsection{The Weak Competition Case}

In the event that $0<b, c<1$ in \eqref{eq:Ge1pn}-\eqref{eq:Ge1bh}, we are in the weak competition case. Herein, if $d_{1} < d_{2}$, the slower diffuser could win or coexistence can occur. 
We define 
\newline
$\sum :=\Big \{(d_1,d_2) \in (0,\infty)\times (0,\infty) : (u^*,0) $ is stable\Big \} 
\newline
and recap a classical result \cite{He2016a},

\begin{theorem}\label{thm:classic}
Consider \eqref{eq:Ge1pn}-\eqref{eq:Ge1bh}, with $p=1$. Suppose that $0 < b \leq 1, c \in (c^*, 1)$ and m is non-constant. If $(d_1, d_2) \in \sum$, then $(u^*, 0)$ is globally stable among all non-negative and non-trivial initial conditions; if $(d_1, d_2) \notin \overbar{\sum}$ and $d_1 < d_2$, then (\ref{eq:Ge1pn}) admits a unique positive steady state which is globally stable.
\end{theorem}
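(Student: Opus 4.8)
The plan is to reconstruct the argument of \cite{He2016a}, which is a monotone–dynamical–systems argument built on scalar principal–eigenvalue estimates. Throughout, let $u^{*}=u^{*}_{d_{1}}$ and $v^{*}=v^{*}_{d_{2}}$ be the unique positive solutions of the scalar logistic problems $d_{i}\Delta w+w(m(x)-w)=0$ under Neumann conditions, and let $\mu_{1}(d,W)$ be the principal eigenvalue of $d\Delta\phi+W(x)\phi=\mu\phi$ on $\Omega$ with Neumann data. First I would note that in the competitive order $(u_{1},v_{1})\preceq(u_{2},v_{2})\iff u_{1}\le u_{2},\ v_{1}\ge v_{2}$ the semiflow generated by \eqref{eq:Ge1pn}--\eqref{eq:Ge1bh} with $p=1$ is strongly order preserving, and every orbit is bounded by comparison with a logistic equation. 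Hence the theory of two–species competitive monotone systems (Hsu--Smith--Waltman, Lam--Ni) reduces the global dynamics to the stability of the semi-trivial equilibria $(u^{*},0)$ and $(0,v^{*})$ and the structure of the set of coexistence steady states: if neither semi-trivial equilibrium is stable there is a stable coexistence state; if exactly one is stable and there is no coexistence state, that equilibrium attracts every positive initial datum.

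Next I would translate stability into eigenvalues: $(u^{*},0)$ is linearly stable iff $\mu_{1}(d_{2},m-cu^{*})<0$, which is precisely the condition defining $\sum$, and $(0,v^{*})$ is linearly stable iff $\mu_{1}(d_{1},m-bv^{*})<0$. I would also use the identity $\mu_{1}(d,m-w^{*}_{d})=0$, the strict monotonicity of $\mu_{1}$ in the potential, and the Lam--Ni inequality $\int_{\Omega}v^{*}\,dx>\int_{\Omega}m\,dx$ valid for non-constant $m$. In the regime $(d_{1},d_{2})\in\sum$, where $(u^{*},0)$ is stable, I would show: (i) every positive steady state $(u,v)$ satisfies $u<u^{*}$ and $v<v^{*}$ by subsolution comparison (because $bv,cu>0$ pointwise); (ii) under $0<b\le1$ and $c\in(c^{*},1)$ there is \emph{no} positive steady state, by feeding the bounds from (i) into the eigenvalue characterisations and invoking the definition of $c^{*}$ together with the Lam--Ni inequality to contradict $\mu_{1}(d_{2},m-cu^{*})<0$; and (iii) the same estimates force $(0,v^{*})$ to be unstable. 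By the monotone-systems dichotomy, $(u^{*},0)$ is then globally asymptotically stable among non-negative, non-trivial data.

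For the regime $(d_{1},d_{2})\notin\overbar{\sum}$ with $d_{1}<d_{2}$, I would use that $(u^{*},0)$ is now \emph{strictly} unstable, i.e.\ $\mu_{1}(d_{2},m-cu^{*})>0$, and combine this with $d_{1}<d_{2}$ and $c<1$ — comparing $u^{*}_{d_{1}}$ with $v^{*}_{d_{2}}$ through the dependence of the logistic solution on the diffusion rate and inserting the comparison into the eigenvalue for $(0,v^{*})$ — to deduce that $(0,v^{*})$ is also unstable. With neither semi-trivial equilibrium stable, the monotone-systems theory yields a stable coexistence steady state; uniqueness of this state follows from the weak-competition structure ($b\le1$, $c<1$) by a sliding/sweeping comparison showing any two positive steady states are ordered and hence equal, after which global stability of the unique coexistence state is automatic.

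The hard part will be steps (ii)--(iii) of the first regime: identifying the threshold $c^{*}$ and proving that whenever $(u^{*},0)$ is stable there is no coexistence state and $(0,v^{*})$ is unstable. This rests on sharp quantitative estimates for the principal eigenvalues of the heterogeneous operators $d\Delta+(m-cu^{*}_{d_{1}})$ as functions of $(d_{1},d_{2},b,c)$, which is the technical core of \cite{He2016a}; by contrast the monotone-dynamics reduction and the uniqueness of the coexistence state are comparatively routine once those eigenvalue facts are in place.
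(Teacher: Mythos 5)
You should first be aware that the paper contains no proof of Theorem \ref{thm:classic} at all: it is explicitly presented as a ``recap [of] a classical result \cite{He2016a}'' and is simply quoted from He--Ni, so there is nothing in the manuscript to compare your argument against line by line. Judged on its own terms, your proposal correctly identifies the architecture of the He--Ni/Lam--Ni proof: the competitive order, the reduction via monotone dynamical systems theory to (a) linear stability of the two semi-trivial states, characterised by the sign of the principal eigenvalues $\mu_1(d_2,m-cu^*)$ and $\mu_1(d_1,m-bv^*)$, and (b) the structure of the set of coexistence states; together with the integral inequality $\int_\Omega \theta_d\,dx>\int_\Omega m\,dx$ for the logistic profile when $m$ is non-constant. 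That roadmap is accurate.

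However, as a proof the proposal has two genuine gaps, both at the places you wave at rather than execute. First, $c^*$ is never defined, and steps (ii)--(iii) of your first regime --- nonexistence of coexistence states and instability of $(0,v^*)$ whenever $(d_1,d_2)\in\sum$ --- are precisely where the definition of $c^*$ enters and where all the work of \cite{He2016a} lives; you acknowledge this, but then the ``proof'' of the first half of the theorem is an empty box. Likewise, in the second regime your claim that instability of $(u^*,0)$ plus $d_1<d_2$ and $c<1$ forces instability of $(0,v^*)$ cannot be settled by ``comparing $u^*_{d_1}$ with $v^*_{d_2}$ through the dependence of the logistic solution on the diffusion rate'': the logistic steady states $\theta_d$ are not monotone in $d$ for general non-constant $m$, so that comparison does not exist in the form you need. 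Second, and more seriously, you describe uniqueness of the positive steady state as ``comparatively routine'' via a sliding argument showing any two coexistence states are ordered. That is not true in this setting --- distinct coexistence states of a competitive system need not be order-related, and uniqueness of the positive steady state for the heterogeneous weak-competition system is itself a main theorem of Lam--Ni \cite{Lam2012} and He--Ni, obtained through delicate eigenvalue and degree-theoretic arguments, not a comparison sweep. So your write-up is a faithful summary of the strategy of the cited reference, but it is not a proof; the paper, for its part, does not attempt one either.
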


Once we bring in FTE, that is $p<1$, numerical simulations illustrate interesting scenarios in the bifurcation plots in $(d_{1},d_{2})$ space. See Fig. \ref{fig:Diff_Diff} (a) for the classical result \cite{He2016a, He2016b} - however, when $p<1$, the bifurcation plot changes qualitatively, see Fig. \ref{fig:Diff_Diff} (b)-(c). We now define,
\newline
$\sum_{1} :=\Big \{(d_1,d_2) \in (0,\infty)\times (0,\infty) : (0,v^{*}) $ is stable\Big \}. 
\newline

This motivates the following conjecture,

\begin{conjecture}\label{con:c1}
Consider \eqref{eq:Ge1pn}-\eqref{eq:Ge1bh}. Suppose that $0 < b \leq 1, c \in (c^*, 1)$ and m is non-constant, then $\exists \ 0 <p<1$, s.t. If $(d_1, d_2) \in \sum$, then $(u^*, 0)$ is globally stable among all non-negative and non-trivial initial conditions; If $(d_1, d_2) \in \sum_{1}$, then $(0, v^{*})$ is globally stable among all non-negative and non-trivial initial conditions;
 if $(d_1, d_2) \notin \overbar{\sum \cup \sum_{1}}$ and $d_1 < d_2$, then (\ref{eq:Ge1pn}) admits a unique positive steady state which is globally stable.
\end{conjecture}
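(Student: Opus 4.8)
The final statement is a conjecture, so what follows is the line of attack I would pursue, together with the point at which it presently stalls. The plan is to treat the three regions of $(d_1,d_2)$-space separately and then to glue the results by a continuation argument in $p$. As preliminaries I would first record, as in the remark following Theorem~\ref{thm:die}, that although $u^p$ is only H\"older at $u=0$, away from $\{u=0\}$ the kinetics are smooth, the system retains its competitive (order-preserving) structure, and the weak comparison principle of \cite{BS02,BV03} is available; this supports sub/supersolution arguments and a (possibly multivalued) monotone semiflow. I would also record the a priori bound $u(x,t)\le C\,m(x)$ and the corresponding lower bound $v(x,t)\ge C_1 v_0(x)e^{-C_2 t}$ used in the proof of Theorem~\ref{thm:sdl}, both obtained by comparison with scalar logistic problems \cite{Cantrell2003}.

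For $(d_1,d_2)\in\sum$, I would argue that global convergence to $(u^*,0)$ survives $p<1$ via the competitive structure: the linearization at $(u^*,0)$ is unaffected by $p$ because the term $bu^p v$ vanishes to first order there (as $v=0$), so $(u^*,0)$ is linearly stable precisely when $(d_1,d_2)\in\sum$, and, provided there is no positive steady state, the monotone/competitive-systems dichotomy forces every non-trivial orbit to $(u^*,0)$. Non-existence of positive steady states for the chosen $p$ holds by perturbing Theorem~\ref{thm:classic} when $p$ is close to $1$. Symmetrically, on the smaller set $(d_1,d_2)\notin\overbar{\sum\cup\sum_{1}}$ with $d_1<d_2$, I would preserve the classical conclusion of Theorem~\ref{thm:classic} (a unique, globally stable positive steady state) by continuation in $p$: off the degenerate set the coexistence state depends continuously on $p$, and uniform persistence of both species is an open condition, so the conclusion persists for $p$ near $1$.

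The heart of the matter is $(d_1,d_2)\in\sum_{1}$, where I want $(0,v^*)$ globally stable. Here I would reuse verbatim the $L^2$ energy computation from the proof of Theorem~\ref{thm:sdl}: multiply the $u$-equation by $u$, insert $v\ge C_1 v_0 e^{-C_2 t}$ and the Gagliardo--Nirenberg interpolation $\|u\|_2^{\,l}\le C(\|\nabla u\|_2^2+\int_\Omega u^{1+p})$ with $0<l<2$, obtaining $\tfrac12\tfrac{d}{dt}\|u\|_2^2+C_4 e^{-C_2 t}(\|u\|_2^2)^{l/2}\le C_5\|u\|_2^2$, hence finite-time extinction of $u$ once $\|u_0\|_2$ falls below an explicit threshold depending on $\min_x v_0$. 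To upgrade this to global stability — for all non-trivial data — I would first prove a uniform absorption statement: when $(d_1,d_2)\in\sum_{1}$ and $p$ is small, every orbit eventually enters a region where $\min_x v(x,t)\ge\delta>0$ and $\|u(x,t)\|_2$ is below the extinction threshold, after which $u$ dies in finite time and $v$ relaxes to $v^*$. The delicate points are (i) characterizing $\sum_{1}$ by the correct eigenvalue condition — the natural candidate is negativity of the principal eigenvalue of $d_1\Delta+m(x)-bv^*$, but the non-smooth term makes the genuine basin of $(0,v^*)$ strictly larger, so some care is needed — and (ii) the absorption step, which requires the absence of coexistence steady states and of non-constant $\omega$-limit behaviour in this regime; I would attempt it by combining the monotone-semiflow dichotomy with the energy estimate above, or by a Lyapunov functional for the reduced dynamics once $\|u\|_\infty$ is small.

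The main obstacle is the gluing. The region $\sum_{1}$ forces $p$ to be \emph{small} (so that $u^p\gg u$ near $u=0$ and finite-time extinction is triggered on a large set of data), whereas the arguments in the other two regions were run by \emph{proximity to $p=1$}. A single $p$ serving all three at once requires quantitative control of how $\sum$, $\sum_{1}$ and the coexistence region move in $(d_1,d_2)$-space as $p$ decreases from $1$ — a monotonicity/continuity property of this partition in $p$ that the simulations in Fig.~\ref{fig:Diff_Diff} support but that I do not see how to establish in closed form; this is why the statement is recorded as a conjecture rather than a theorem.
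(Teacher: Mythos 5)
There is nothing in the paper to check your proposal against: Conjecture~\ref{con:c1} is stated without proof, supported only by the numerical bifurcation diagrams in Fig.~\ref{fig:Diff_Diff} and the simulations in Fig.~\ref{fig:newfte1}, and the Discussion explicitly lists a proof of this conjecture as future work. So your sketch cannot be ``the same approach as the paper'' or ``a different route''; it is an attempt at something the authors themselves left open. Judged on its own terms, the plan is sensible and draws on exactly the tools the paper makes available: the comparison bounds $u\le Cm(x)$ and $v\ge C_1v_0e^{-C_2t}$, the $L^2$ energy/Gagliardo--Nirenberg argument of Theorem~\ref{thm:sdl} for finite-time extinction of $u$, and perturbation of Theorem~\ref{thm:classic} for $p$ near $1$. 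Your observation that the spectrum of the linearization at $(u^*,0)$ is unaffected by $p$ (the $p$-dependence sits only in an off-diagonal entry of a triangular linearization) is correct, and your caveat that $(0,v^*)$ has no classical linearization in the $u$-direction --- so that $\sum_1$ must be characterized by a nonlinear or one-sided condition --- is precisely the right worry.

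Two points deserve sharpening. First, the tension you identify between ``$p$ small for $\sum_1$'' and ``$p$ near $1$ for the other two regions'' is probably not the true obstruction: for \emph{any} $p<1$ one has $u^p\gg u$ as $u\to0$, so the finite-time-extinction mechanism is qualitatively present for all $p<1$; what varies with $p$ is the size of the basin, and indeed Fig.~\ref{fig:Diff_Diff}(b) shows all three regions coexisting already at $p=0.999$ (this is the content of Conjecture~\ref{con:c2}). The genuine gap is the one you flag as step (ii): upgrading the small-data finite-time extinction of Theorem~\ref{thm:sdl} to an \emph{absorption} statement valid for all non-trivial data when $(d_1,d_2)\in\sum_1$, i.e., ruling out coexistence steady states and showing every orbit eventually enters the extinction basin. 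Second, your appeal to the monotone-systems dichotomy needs more care than a parenthetical ``(possibly multivalued)'': the dichotomy for competitive parabolic systems is normally proved for a single-valued, strongly order-preserving semiflow, and the non-Lipschitz kinetics destroy backward (and potentially forward) uniqueness on $\{u=0\}$, which is exactly the set the dynamics is being driven to. Establishing an order-preserving selection of solutions, or working with the maximal/minimal solutions of the comparison framework of \cite{BS02,BV03}, is a prerequisite for the whole architecture and is itself nontrivial. With those caveats, your sketch is a fair roadmap to what remains an open problem in the paper.
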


We also conjecture,

\begin{conjecture}\label{con:c2}
Consider \eqref{eq:Ge1pn}-\eqref{eq:Ge1bh}. Suppose that $0 < b \leq 1, c \in (c^*, 1)$ and m is non-constant, then for certain initial data, and any $\epsilon >0$, $\exists \ 0 <1-\epsilon < p <1$, s.t. If $(d_1, d_2) \in \sum$, then $(u^*, 0)$ is globally stable among all non-negative and non-trivial initial conditions; If $(d_1, d_2) \in \sum_{1}$, then $(0, v^{*})$ is globally stable among all non-negative and non-trivial initial conditions;
 if $(d_1, d_2) \notin \overbar{\sum \cup \sum_{1}}$ and $d_1 < d_2$, then (\ref{eq:Ge1pn}) admits a unique positive steady state which is globally stable.
\end{conjecture}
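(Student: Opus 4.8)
The plan is to split Conjecture \ref{con:c2} into two essentially independent pieces: (A) a robustness statement, to the effect that the classical trichotomy of Theorem \ref{thm:classic} survives the singular perturbation $p\nearrow 1$ in \eqref{eq:Ge1pn}, so that the sets $\Sigma,\Sigma_{1}$ and the coexistence r\'egime behave as in the $p=1$ theory; and (B) a quantitative sharpening of the finite time extinction mechanism of Theorem \ref{thm:sdl}, in which the dependence of the extinction threshold on $p$ is made explicit, so that $p$ can be pushed arbitrarily close to $1$ at the cost of shrinking the admissible data. The phrase ``for certain initial data'' in the statement is then exactly the basin on which (B) forces convergence to $(0,v^{*})$, and it is the size of that basin that degenerates as $\epsilon\to 0$.

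For (B) I would re-run the energy argument in the proof of Theorem \ref{thm:sdl} with $q=1+p$ kept symbolic. The Gagliardo--Nirenberg--Sobolev step imposes $\tfrac{2-q}{2+q}\le\theta\le 1$, and the constraint $\theta\le 1$ already forces the interpolation exponent to satisfy $l\ge \tfrac{2q}{4-q}$; since $l<2$ is required, as $q=1+p\uparrow 2$ one is squeezed to $l\uparrow 2$, i.e. $\alpha=\tfrac l2\uparrow 1$. Carrying $\alpha$ through the comparison ODE \eqref{eq:Ode} gives finite time extinction of $u$ whenever $\|u_{0}\|_{2}^{2}<\Theta(p):=\bigl(\tfrac{(1-\alpha)C_{4}}{C_{6}}\bigr)^{1/(1-\alpha)}$, where $C_{4},C_{6}>0$ depend only on $\|m\|_{\infty}$, $\min v_{0}$, $d_{1},d_{2}$ and $|\Omega|$. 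One checks $\Theta(p)>0$ for every $p\in(0,1)$ while $\Theta(p)\to 0$ as $p\to 1$; hence, given $\epsilon>0$, pick $p\in(1-\epsilon,1)$, let $v_{0}>0$ be arbitrary and $u_{0}$ satisfy $\|u_{0}\|_{2}^{2}<\Theta(p)$. The corresponding solution has $u(\cdot,t)\equiv 0$ for $t\ge T^{*}$, after which $v$ solves the logistic problem and converges uniformly to $v^{*}$. This produces the finite--time / $\Sigma_{1}$ branch of the conjecture and pins down the ``certain initial data''.

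For (A) I would exploit that \eqref{eq:Ge1pn} with $0<p\le 1$ is still a competitive system (the off--diagonal reaction terms $-bu^{p}$ and $-cv$ are nonpositive), so away from $\{u=0\}$ the theory of monotone flows applies, and I would establish three structural facts. First, $(u^{*},0)$ is a steady state whose linearisation is block--triangular with $v$--block $d_{2}\Delta+m-cu^{*}$; since this operator does not involve $p$, the set $\Sigma$ on which $(u^{*},0)$ is linearly stable coincides with the classical one, and its attractivity on $\Sigma$ (for data not covered by (B)) is inherited from Theorem \ref{thm:classic} by a comparison argument together with the $p\to 1$ continuity of the flow on compact sets bounded away from $\{u=0\}$. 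Second, $(0,v^{*})$ is a steady state and, for $p<1$, is locally attracting: near it the $u$--equation behaves like $u_{t}\approx d_{1}\Delta u+mu-bv^{*}u^{p}$ with $u^{p}\gg u$, which is the same mechanism as in (B). Third, on the complement of $\overline{\Sigma\cup\Sigma_{1}}$, where the classical system has a globally attracting positive steady state, I would continue that state to $p<1$ near $1$ by an implicit function / topological degree argument and then invoke the monotone dichotomy (no stable semitrivial equilibrium competing in that r\'egime, once the super--stable corner is excluded) to upgrade to global attractivity among the remaining interior data.

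I expect the main obstacle to lie in (A), and to be caused precisely by the loss of $C^{1}$ smoothness of the nonlinearity at $\{u=0\}$ when $p<1$. This already clouds well--posedness of the Neumann problem (one only has weak/classical solutions for restricted data; cf.\ the Remark above and \cite{BS02,KF18,BV03}), so a rigorous proof must first fix a solution class in which the comparison principle holds. More seriously, the standard ``linearised stability plus monotone flow'' machinery that would give persistence and global attractivity of the coexistence steady state is not directly available, since the linearisation at the boundary is singular; one likely needs a generalised index / fixed point argument, or a Lyapunov functional adapted to \eqref{eq:Ge1pn}. Finally, the words ``globally stable among all non-negative and non-trivial initial conditions'' must be reconciled with the local super--stability of $(0,v^{*})$ for $p<1$: since $b\le 1$, the basin of $(0,v^{*})$ should contract to a point as $p\to 1$, so what one can realistically hope to prove is that on $\Sigma$ every trajectory outside an arbitrarily small neighbourhood of $(0,v^{*})$ converges to $(u^{*},0)$, that neighbourhood being exactly the ``certain initial data'' of the finite--time branch. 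Making this dichotomy sharp is, in my view, where the real work lies.
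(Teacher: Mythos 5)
You should first be aware that the paper does not prove this statement: it appears as Conjecture \ref{con:c2}, is supported only by the numerical bifurcation diagrams of Fig. \ref{fig:Diff_Diff}, and the Discussion explicitly defers a proof to future work. There is therefore no argument of the authors' to compare yours against; what follows assesses your program on its own terms.

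Your part (B) is sound and is, in substance, the quantitative content already latent in the proof of Theorem \ref{thm:sdl}: with $q=1+p$ kept symbolic, the constraint $\theta\le 1$ does force $l\ge 2q/(4-q)$, so $\alpha=l/2\to 1$ as $p\to 1$ and the extinction threshold $\bigl((1-\alpha)C_4/C_6\bigr)^{1/(1-\alpha)}$ collapses to zero; this correctly explains why the admissible data must shrink as $\epsilon\to 0$ and is consistent with the persistence of the ``green band'' in Fig. \ref{fig:Diff_Diff}(b) for $p$ very close to $1$. (If you import that argument wholesale you also inherit its weak points: the lower bound $v\ge C_1v_0e^{-C_2t}$ is asserted rather than derived for the Neumann problem, and the closing step ``$L^2$ convergence implies uniform convergence'' is false as stated, so uniform convergence to $(0,v^{*})$ needs a separate regularity argument.) The genuine gap is your part (A), as you yourself concede: for any fixed $p<1$ the corner $(0,v^{*})$ attracts an open set of data in finite time, so $(u^{*},0)$ cannot be globally stable among \emph{all} non-trivial non-negative data on $\sum$ in the literal sense, and the conjecture is only coherent once the clause ``for certain initial data'' is read as excising that basin; your proposal does not yet supply uniform-in-$p$ control of that basin, nor the continuation and global attractivity of the coexistence state for $p<1$, since the monotone-systems and linearised-stability machinery you invoke is exactly what the non-Lipschitz term $u^{p}$ at $\{u=0\}$ breaks (well-posedness itself is only available in restricted solution classes, cf. the paper's remark citing \cite{BS02, KF18, BV03}). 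In short, this is a plausible research program that agrees with the paper's numerics and correctly isolates the hard points, but it is not a proof, and the obstacles you list in your final paragraph are precisely the reasons the authors left the statement as a conjecture.
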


\begin{figure}[hbt!] 
\begin{center}
\subfigure[p=1]{
    \includegraphics[scale=.203]{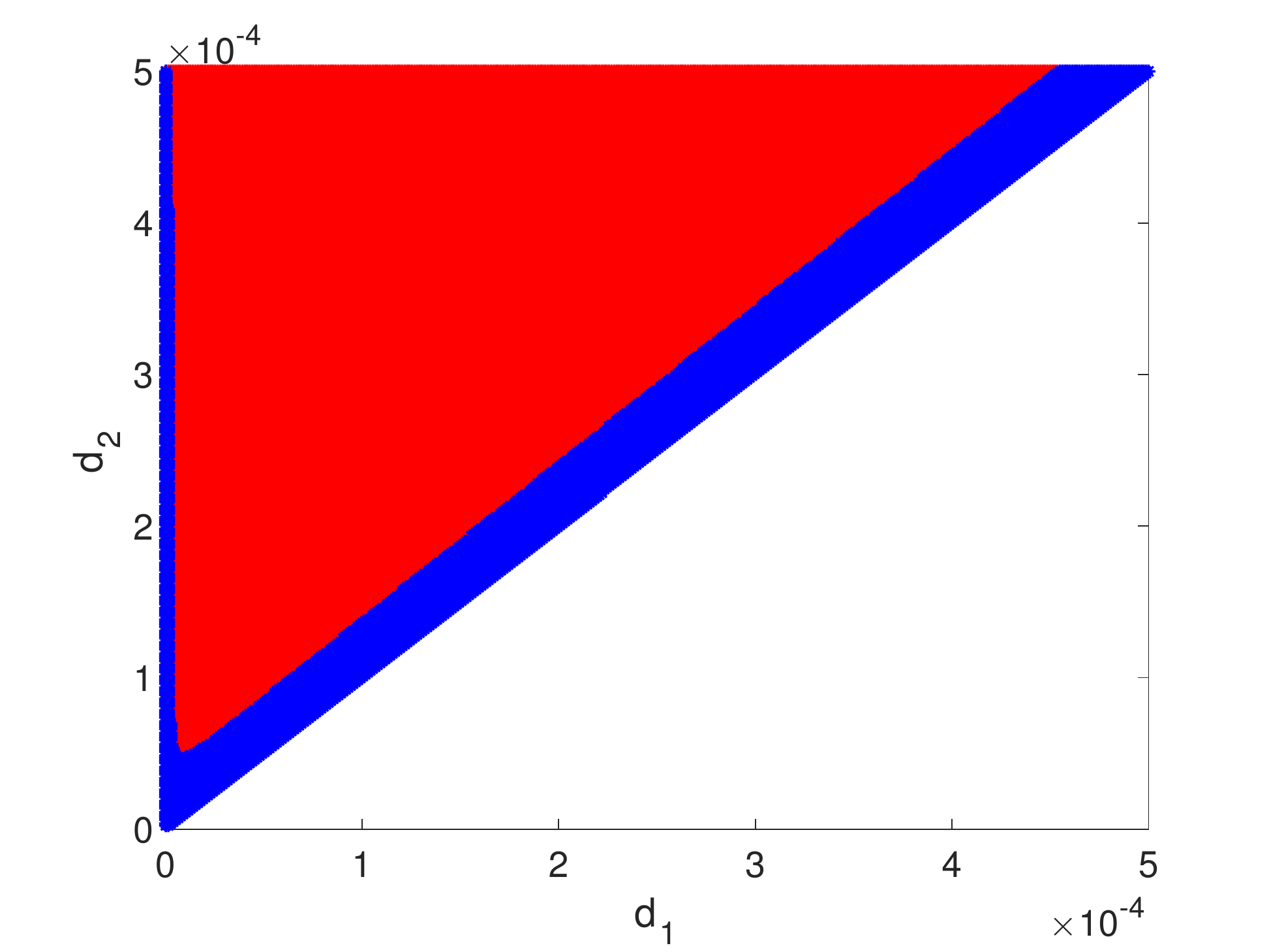}}
\subfigure[p=0.999]{    
    \includegraphics[scale=.203]{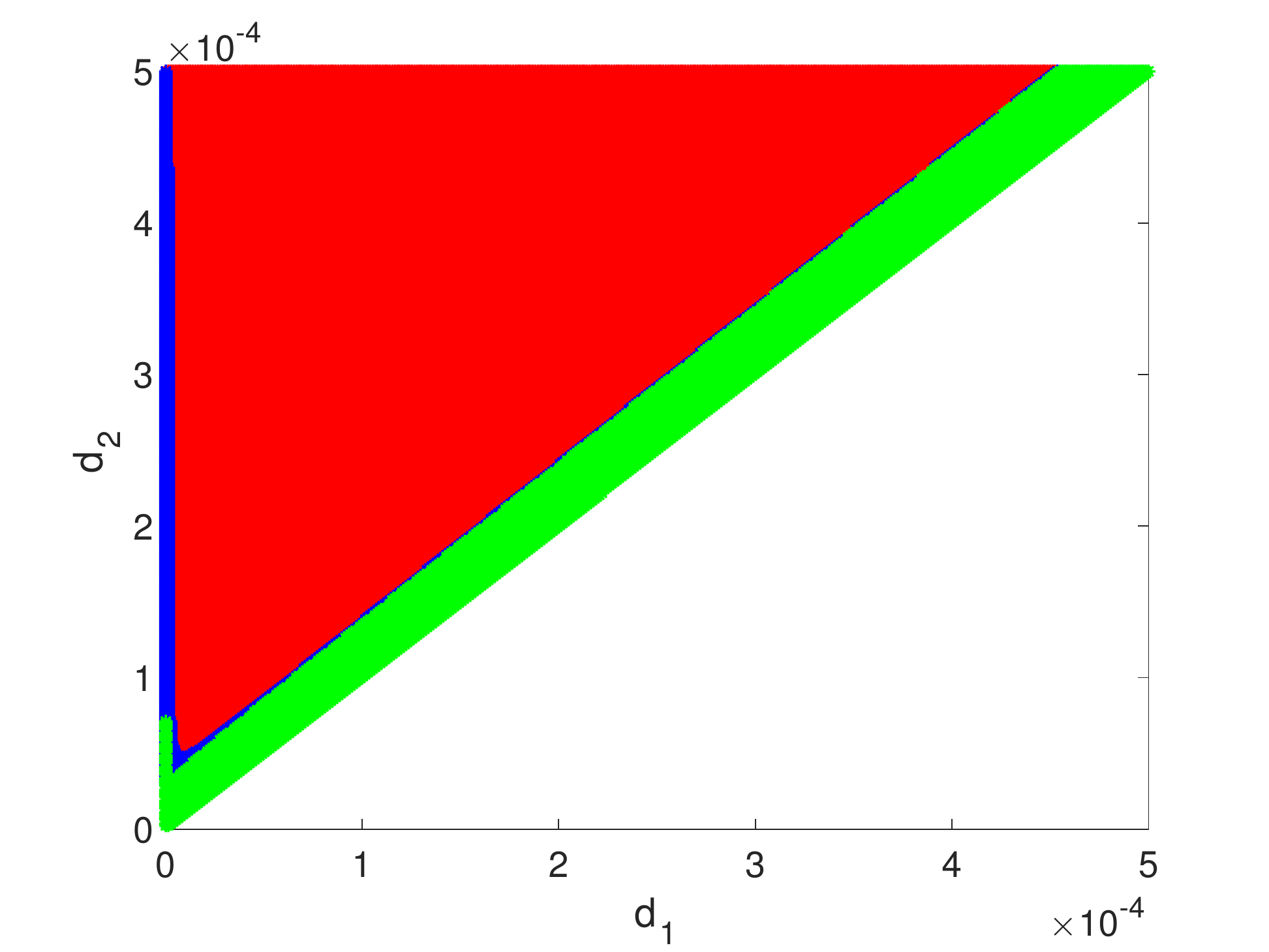}}
\subfigure[0.7]{
    \includegraphics[scale=.203]{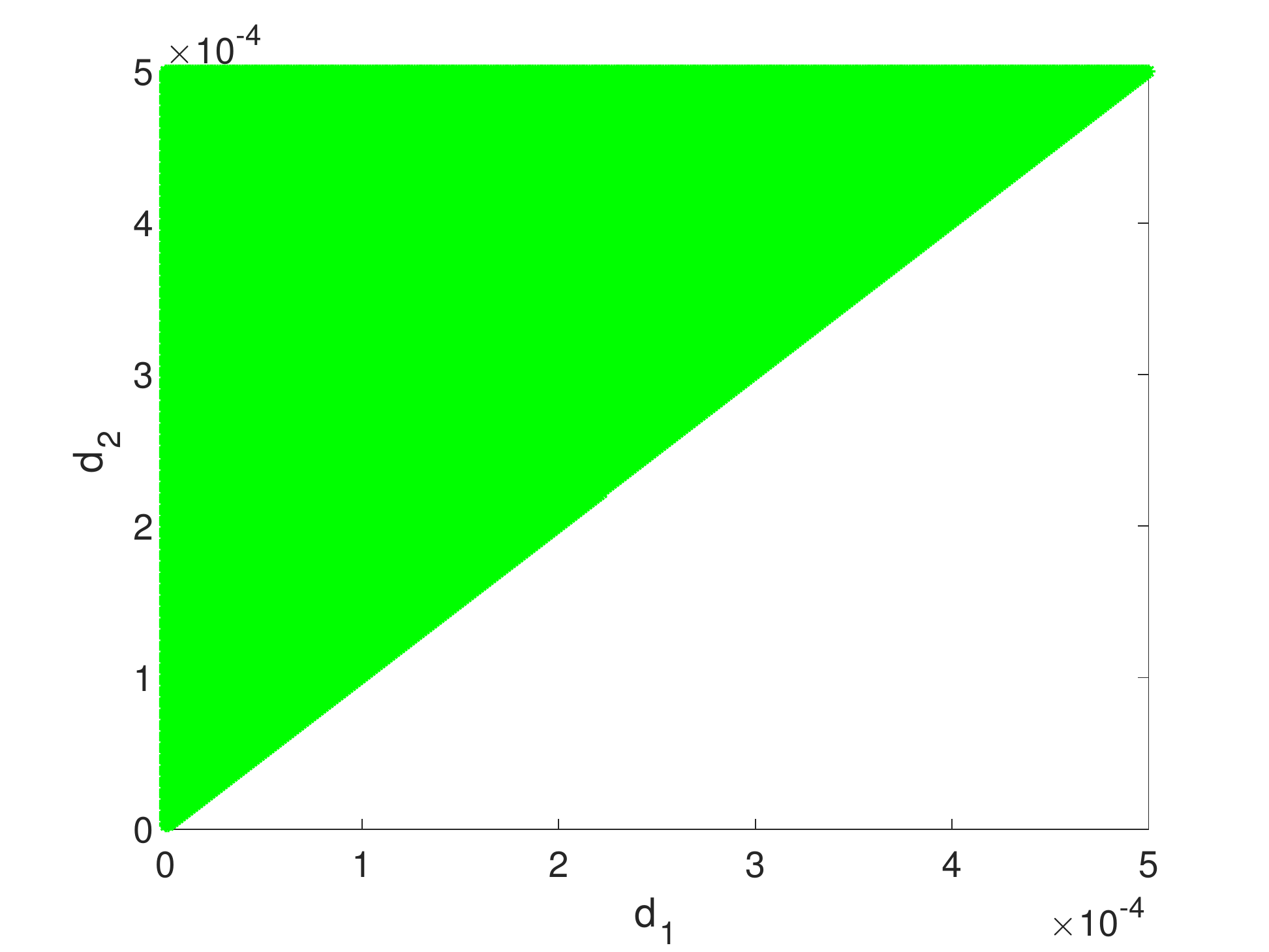}}
\end{center}
%\centering
%\begin{tabular}{ccc}
%\hspace{-3.3cm}
%\includegraphics[scale=0.3]{MainP1.eps} &
%\includegraphics[scale=0.3]{pp999.eps} &
%\includegraphics[scale=0.3]{Main_0pt7.eps} \\
%(a) $p=1$ & (b) $p=0.999$ & (c) $p=0.7$\\
%\end{tabular}
%\begin{tabular}{cc}
%\hspace{-3.3cm}
%\includegraphics[scale=0.5]{Mainp98.eps} &
%\includegraphics[scale=0.5]{Main_0pt7.eps} \\
%(c) $p=0.98$  & (d) $p=0.7$\\
%\end{tabular}
\caption{Plots of $d_1$ vs $d_2$ for \eqref{eq:Ge1pn}-\eqref{eq:Ge1bh}.  We choose $\Omega = [0, 1] $ and $m(x)=x(1-x)$. We use the following parameters: $b=c=0.999$.  The  red region shows that $u$ prevails, ie  ($u^*,0)$, blue region shows coexistence $(u^*,v^*)$ and green region shows $v$ prevailing, ie $(0,v^*)$. The classical results in theorem \ref{thm:classic} is seen in (a) and FTE results in (b) and (c) respectively.}
\label{fig:Diff_Diff}
\end{figure}

\begin{figure}[hbt!] 
\begin{center}
\subfigure[]{
    \includegraphics[scale=.31]{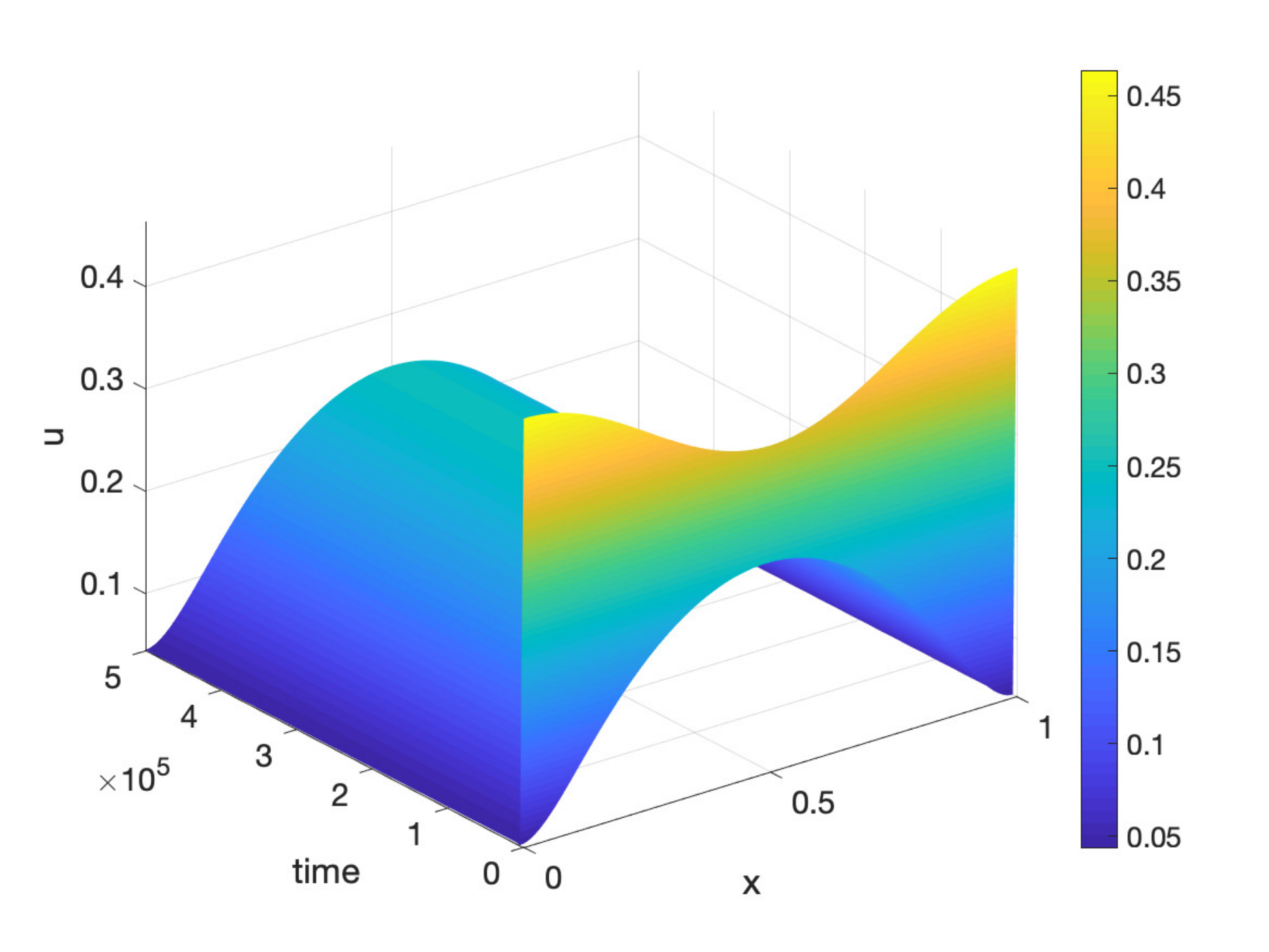}}
\subfigure[]{    
    \includegraphics[scale=.31]{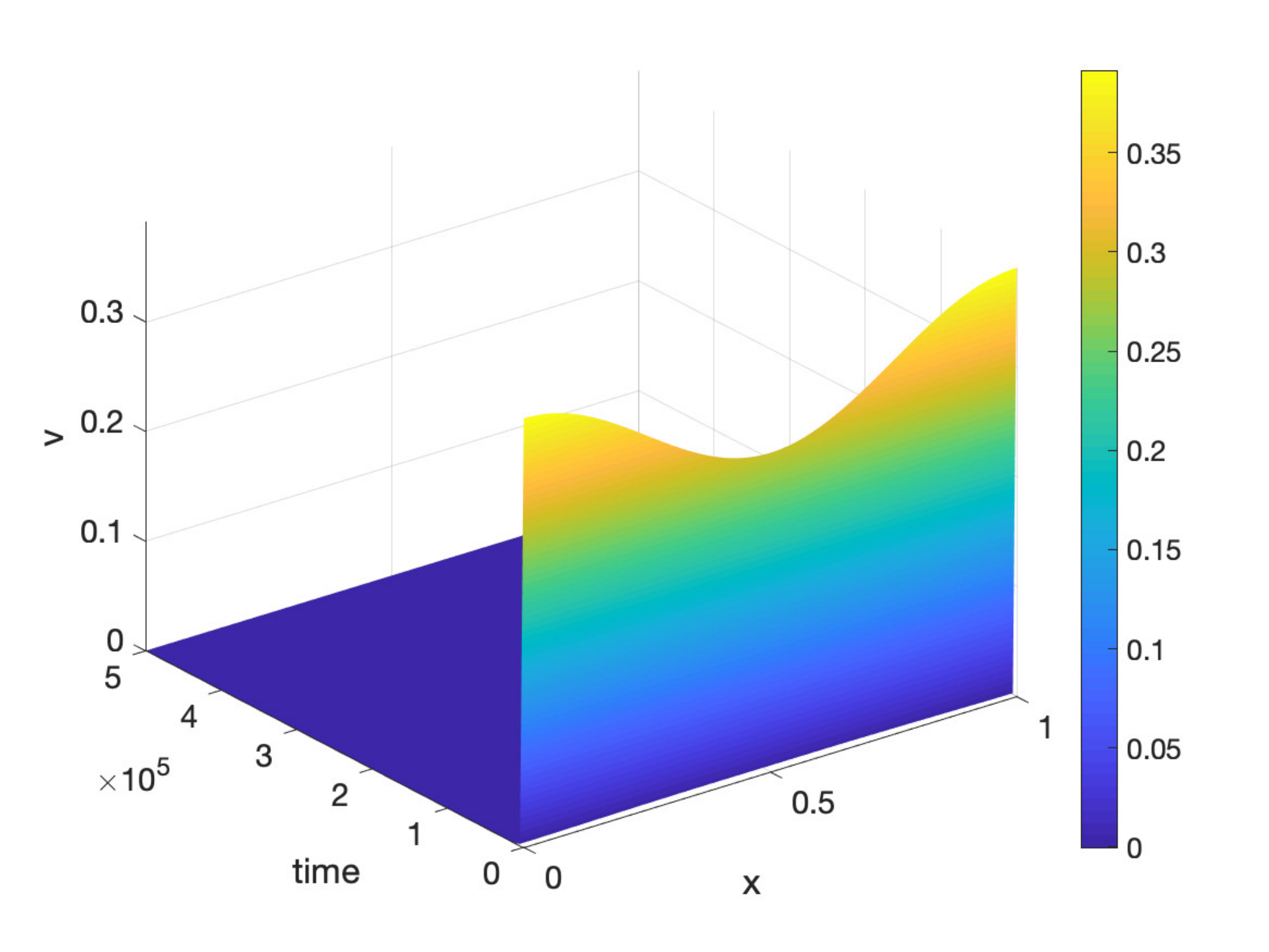}}
\subfigure[]{
    \includegraphics[scale=.31]{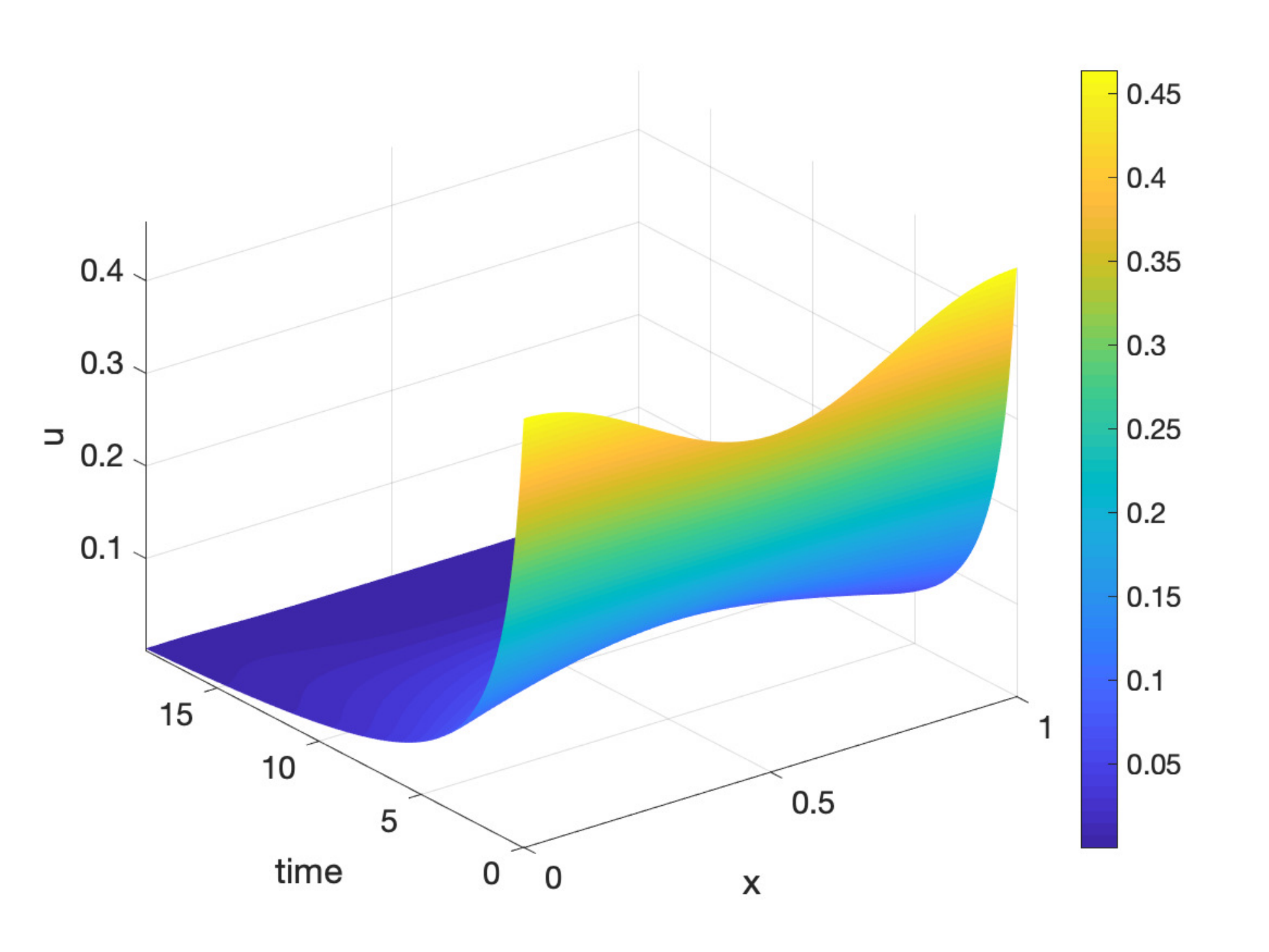}}
\subfigure[]{    
    \includegraphics[scale=.31]{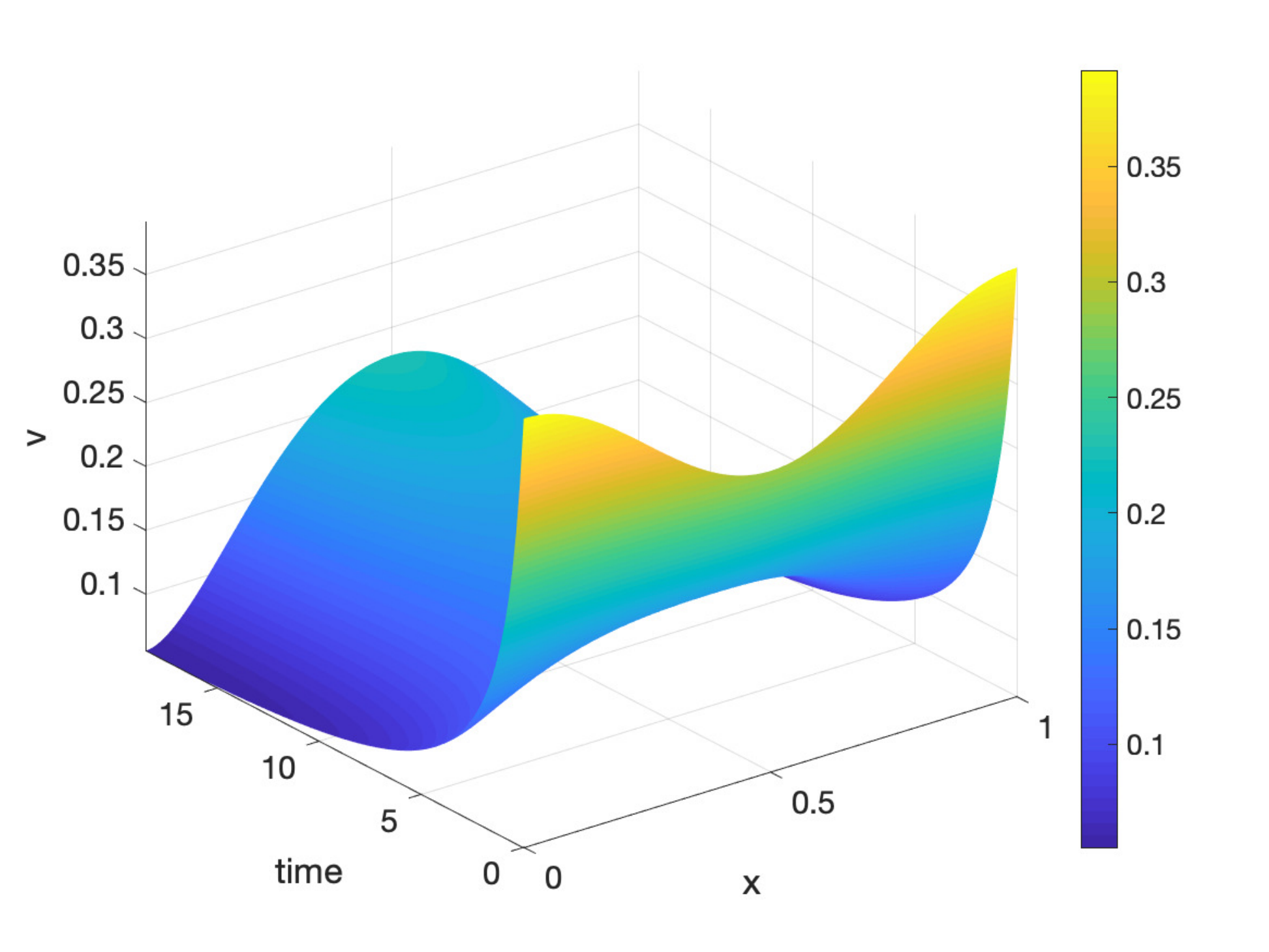}}    
\end{center}
%\centering
%\begin{tabular}{cc}
%\hspace{-3.3cm}
%\includegraphics[scale=0.45]{ustar_zero-u.eps} &
%\includegraphics[scale=0.45]{ustar_zero-v.eps} \\
%(a)  & (b)\\
%\end{tabular}
%\begin{tabular}{cc}
%\hspace{-3.3cm}
%\includegraphics[scale=0.45]{zero_vstar-uplot.eps} &
%\includegraphics[scale=0.45]{zero_vstar-vplot.eps} \\
%(c)   & (d) \\
%\end{tabular}
\caption{Simulation showing solutions converging to $(u^*,0)$ in (a) and (b) for $p=1$. (c) and (d) show solutions converging to $(0,v^*)$ in finite time for $p=0.7$. The parameters used for \eqref{eq:Ge1pn}-\eqref{eq:Ge1bh} are $b=c=0.999, d_1=0.00012425 ~\text{and}~ d_2=0.00033167$. We choose $\Omega=[0,1] $ and $m(x)=x(1-x)$. The slower diffuser wins in (a) for $p=1$ and loses in (c) for $p<1$ in a weak competition case.}
\label{fig:newfte1}
\end{figure}

\section{Self Regulating or External Mechanisms of Control}
We consider the case where some proportion of the weaker competitor is harvested by an external controller or self regulates its population by an action such as cannibalism \cite{L20}. We ask if this ``strategy" might make it possible for stabilization of weaker population. We choose parametric restrictions according to the extinction case.
 Let $v= d v + e v$, here $d+e=1$, and $e$ is the proportion of the population that will possess the FTE dynamic. If $e=0, d=1$, we are in the competitive exclusion case \eqref{extinctioncondition2}. This leads us to the model,
\begin{equation}
\label{eq:ES1}
\left\{ \begin{array}{ll}
\dfrac{du }{dt} &~ = a_{1}u- b_{1}u^{2} - c_{1}auv ,\\[2ex]
\dfrac{dv }{dt} &~ =  a_{2}v - b_{2}v^{2} - c_{2} d uv- c_{2} e v^{q}.
\end{array}\right.
\end{equation}
We see that even in this setting $v$ can avoid competitive exclusion and persist, so coexist with the stronger competitor $u$. 
\begin{figure}[!htb]
\begin{center}
    \includegraphics[scale=.6]{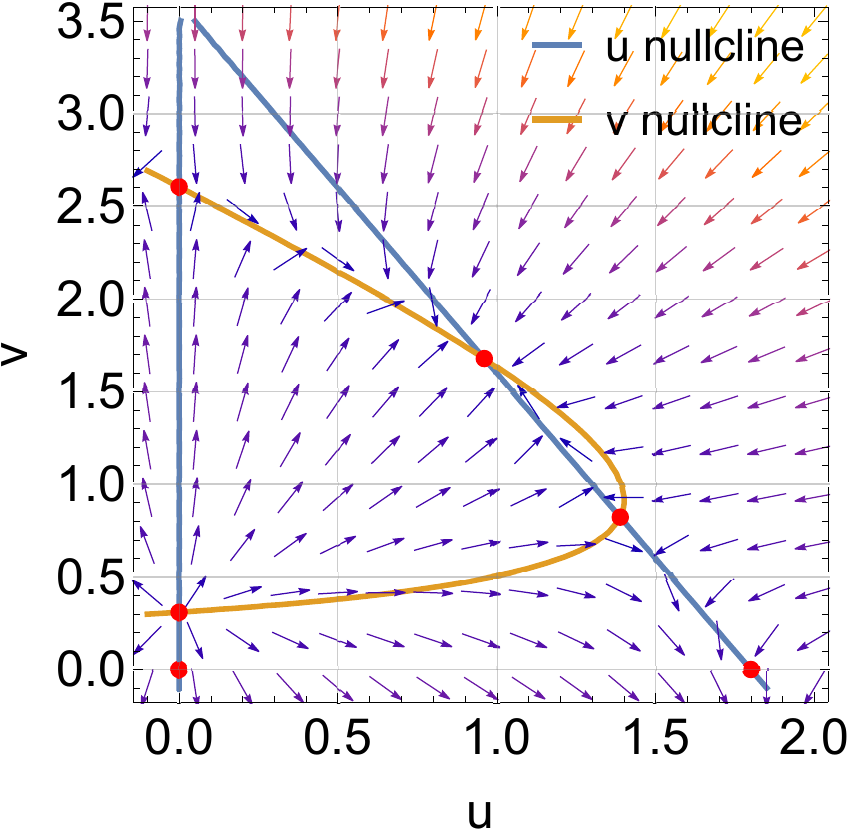}
\end{center}
 \caption{ External mechanisms of control \eqref{eq:ES1}: Extinction case \eqref{extinctioncondition2} for $a_{1}=1.8,~a_{2}=3,~b_{1}=1,~b_{2}=1,~c_{1}=0.5,~c_{2}=1.7,~a=1,~d=0.45,~e=0.55,~q=0.1$.}
      \label{fig:Extinction-Suicide1}
\end{figure}

%%%%%%%%%%%%%%%%%%%%%%%%%%%%%%%%%%%%%%%%%%%%%%%%%%%%%%%%%%%
\section{Discussions and Conclusions}\label{section:Dissussion_conclusion}

The current manuscript considers the two species ODE and PDE Lotka-Volterra competition model, where one competitor possesses the dynamic of FTE. As mentioned this is of immense interest currently to mathematicians and ecologists alike, in particular there is effort to understand in what capacity species will ``optimise" \cite{Bai2016, Caubet, Ding2010, Ni2020}. We see that bringing in FTE can change (albeit counterintuitively) certain classical ecological scenarios. Most notably, in the ODE case, we see that the weaker competitor can avoid competitive exclusion with the FTE dynamic - this is counterintuitive as it posits, that speeding up its extinction, enables it to turn the tables on a stronger competitor and coexist. This bodes interesting consequences for bio-control applications \cite{L20}, as well as motivates the use of such mechanisms in insect resistance management strategies, where two competing biotypes of a pest species are preferred to coexist \cite{O18} - our results could be used to develop tactics in these directions. Note, from an applied point of view, the FTE can be engineered by self regulating mechanisms or external control as well, via \eqref{eq:ES1}, thus a future direction could be a detailed investigation of such models. Also interesting, would be considering models where the stronger competitor counters the FTE dynamic in the weaker competitor with its own FTE dynamic. 

In the PDE case, Fig. \ref{fig:Diff_Diff}, is immensely interesting both from a mathematical and evolutionary point of view. Mathematically we aim to focus on a proof of conjecture \ref{con:c1}. From an evolutionary point of view, what we see is that the FTE dynamic, takes away some of the competitive advantage the slower diffuser has, in that if $d_{1} < d_{2}$, but close to $d_{2}$, the faster diffuser may win, and thus be selected for. This is the ``green" band seen in Fig. \ref{fig:Diff_Diff} (b). However, as $p$ is decreased, the advantage of slow diffusion, is taken away further and only $(0,v^{*})$ is observed, Fig. \ref{fig:Diff_Diff} (c). Conjecture \ref{con:c2} hypothesizes, that this taking away of competitive advantage, can be done for $p$ as close to 1 as possible - and in this setting we will see a plot qualitatively similar to Fig. \ref{fig:Diff_Diff} (b). Proving this would make for interesting future work. Another worthwhile future direction will be an extensive numerical simulation across a broader parameter range, to investigate how these dynamics might be effected. Also, such results may/may not hold in time varying environments \cite{Hutson2001}, this is also worthy of future investigations in light of the FTE dynamic.
\section{Appendix}
\label{a1}
\subsection{Analytic Guidelines}
We present analytic guidelines in this section to analyze the model \eqref{eq:Ge1} and to investigate its equilibria. Consider the solutions to the steady state equations:
\begin{align}
u \left[a_{1}  -b_{1} u - c_{1}u^{p-1}v\right]   &= 0,  \label{equilibrium1} \\
v\left[a_{2} -b_{2} v - c_{2} uv^{q-1}\right]  &= 0.   \label{equilibrium2}
\end{align}
The above equations, \eqref{equilibrium1} and \eqref{equilibrium2},  have four types of non-negative equilibria:

 \begin{enumerate}[label=(\roman*)]
 \item  $E_0 (0,0)$;
 \item  $E_1(a_1/b_1,0)$;
 \item  $E_2(0,a_{2}/b_{2})$;
 \item  $E_3(u^*, v^*)$; for $q=1$, we have

\begin{align}
u^*  &=\dfrac{1}{c_{2}}\left[a_{2}-b_{2} v^*\right],   \label{equilibrium3} \\
v^*  &=\dfrac{1}{c_1}\left[ a_{1} (u^*)^{1-p} -b_{1} (u^*)^{2-p}\right]  \label{equilibrium4}
\end{align}

and for $p=1$, we have

\begin{align}
v^*  &=\dfrac{1}{c_{1}}\left[a_{1}-b_{1} u^*\right],   \label{equilibrium5} \\
u^*  &=\dfrac{1}{c_2}\left[ a_{2} (v^*)^{1-q} -b_{2} (v^*)^{2-q}\right].  \label{equilibrium6}
\end{align}

The possible existence of  a unique interior or multiple equilibria are shown in Figs.~\ref{fig:Extinction-FTE1}, ~\ref{fig:Weak-FTE1}, ~\ref{fig:Weak-FTE2} and ~\ref{fig:Strong-FTE1}.
 \end{enumerate}
Now we discuss the local stability of an interior equilibrium point. The Jacobian  matrix $\bf{J}$ of the model \eqref{eq:Ge1} evaluated at any of the possible interior equilibria
$E_3(u^*, v^*)$ is
\begin{align*}
 \bf{J}=  \begin{bmatrix}
      a_1 - 2b_1 u^*-p c_1 {u^*}^{p-1} v^* &   - c_1 {u^*}^{p}\\
      - c_2 {v^*}^{q}  &   a_2 - 2b_2 v^*-q c_2 u^* {v^*}^{q-1}
     \end{bmatrix}.
\end{align*}

The characteristic equation corresponding to ${\bf J}$ is given by
 $$\lambda^2 - \operatorname{tr} \,({\bf{J}}) \lambda + \det \,({\bf{J}}) =0,$$
where
\begin{eqnarray*}
\operatorname{tr} \,({\bf{J}}) 
& = & a_1+a_2 - 2b_1 u^*- 2b_2 v^*-p c_1 {u^*}^{p-1} v^* -q c_2 u^* {v^*}^{q-1},
\end{eqnarray*}
and
\begin{eqnarray*}
\det \,({\bf{J}})
 &=& \left(  a_1 - 2b_1 u^*-p c_1 {u^*}^{p-1} v^* \right) 
  \left(a_2 - 2b_2 v^*-q c_2 u^* {v^*}^{q-1}\right) - c_1 c_2 {u^*}^{p} {v^*}^{q}.
\end{eqnarray*}
Here, $\operatorname{tr} \,({\bf{J}})$ and $\det \,({\bf{J}}) $ represent the trace and determinant of the Jacobian matrix.  Hence the stability of $E_3(u^*, v^*)$ is determined by the sign of $\det \,({\bf{J}}) $ and $\operatorname{tr} \,({\bf{J}})$.

The above results are summarized in the following theorem,

\begin{theorem}\label{localstability}
The interior equilibrium $E_3(u^*, v^*)$ of model \eqref{eq:Ge1} is locally asymptotically stable if  $\operatorname{tr} \,({\bf{J}}) < 0$ and $\det \,({\bf{J}})  > 0$ by Routh-Hurwitz stability criteria.
\end{theorem}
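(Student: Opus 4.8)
The plan is to recognize that Theorem \ref{localstability} is simply the linearization principle (Hartman--Grobman) combined with the Routh--Hurwitz criterion for a degree-two characteristic polynomial, so the proof is short; the only point needing care is the non-smoothness of the kinetics when $0<p,q<1$.

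First I would address that caveat. Although the reaction terms $u^{p}$ and $v^{q}$ fail to be differentiable at $u=0$ and $v=0$ respectively, at an interior equilibrium $E_3(u^{*},v^{*})$ one has $u^{*},v^{*}>0$, so on a neighbourhood of $E_3$ the right-hand side of \eqref{eq:Ge1} is $C^{1}$ (in fact smooth). Hence the classical linearized stability theorem applies: $E_3$ is locally asymptotically stable provided every eigenvalue of the Jacobian ${\bf J}$ evaluated at $E_3$ has strictly negative real part. This is exactly the distinction emphasized in the Remark following Lemma \ref{lem:l2} — linearization is legitimate at the interior equilibrium but not at the boundary equilibria on the axes.

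Next I would invoke that ${\bf J}$ is $2\times2$, so its characteristic polynomial is $\lambda^{2}-\operatorname{tr}({\bf J})\lambda+\det({\bf J})=0$, with $\operatorname{tr}({\bf J})$ and $\det({\bf J})$ as computed just above the theorem statement. For a real quadratic $\lambda^{2}+a_{1}\lambda+a_{2}$, the Routh--Hurwitz conditions for both roots to lie in the open left half-plane are precisely $a_{1}>0$ and $a_{2}>0$. I would verify this in one line rather than merely cite it: the two roots have sum $\operatorname{tr}({\bf J})$ and product $\det({\bf J})$, so if $\det({\bf J})>0$ the roots are either a complex-conjugate pair with common real part $\tfrac{1}{2}\operatorname{tr}({\bf J})$ or two real roots of the same sign, and in either case the further hypothesis $\operatorname{tr}({\bf J})<0$ forces both real parts to be negative. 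Setting $a_{1}=-\operatorname{tr}({\bf J})>0$ and $a_{2}=\det({\bf J})>0$ then gives exactly the stated hypotheses, and the conclusion follows.

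There is no substantive obstacle; the only subtlety is the well-posedness issue already flagged for the non-smooth terms, which is why the theorem is phrased for the interior equilibrium only. If one wished to record the companion statements — a saddle when $\det({\bf J})<0$ (one positive, one negative real eigenvalue) and an unstable node/focus when $\operatorname{tr}({\bf J})>0$ with $\det({\bf J})>0$ — the same eigenvalue bookkeeping delivers them immediately, but the present statement asserts only the sufficient direction for asymptotic stability, which the argument above establishes.
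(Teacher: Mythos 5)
Your proof is correct and follows essentially the same route as the paper: evaluate the Jacobian at the interior equilibrium, form the quadratic characteristic polynomial, and conclude via the trace/determinant signs (Routh--Hurwitz). Your explicit justification that linearization is legitimate at $E_3$ because $u^{*},v^{*}>0$ keeps the kinetics $C^{1}$ there is a welcome detail the paper only gestures at in its remark, but it does not change the argument.
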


\begin{remark}\label{remark:saddle}
If  $\operatorname{tr} \,({\bf{J}})\geq0$ or $<0$  and  $\det \,({\bf{J}})  < 0$, then the roots of model \eqref{eq:Ge1} are both real numbers with opposite sign. Hence $E_3(u^*, v^*)$ is a saddle.
\end{remark}

\begin{example}\label{example1}
We provide justification for the above results by using the following set of parameter values:  $a_{1}=1.8,~a_{2}=3,~b_{1}=1,~b_{2}=1,~c_{1}=0.5,~c_{2}=1.8,~p=1,~q=0.3$. The interior equilibria $E_3^{1}(1.1323,1.3354)$ and $E_3^2(0.5788,2.4424)$ emerge with the Jacobians ${\bf{J^{*}}}$ and ${\bf{J^{**}}}$ respectively, where
 \begin{align*}
 \bf{J^*}=  \begin{bmatrix}
     -1.1323 &   -0.5662\\
      -1.9632 &   -0.1702
     \end{bmatrix}
    \quad \text{and} \quad
     \bf{J^{**}}=  \begin{bmatrix}
     -0.5788 &   -0.2894\\
      -2.3530 &   -2.0521
     \end{bmatrix}. 
\end{align*}
The $\operatorname{tr} \,({\bf{J^*}})=-1.3025<0$ and  $\det \,({\bf{J^*}})=-0.9188< 0$, thus  conditions for the saddle are satisfied.
Also, $\operatorname{tr} \,({\bf{J^{**}}})=-2.6309<0$ and  $\det \,({\bf{J^{**}}})=0.5068> 0$, thus the conditions for local stability are satisfied. We provide simulation in Fig. \ref{fig:Extinction-FTE1}(b) to validate. 
\end{example}

\section*{Conflict of Interest}
The authors declare there is no conflict of interest in this paper.

\section*{Acknowledgements}
 RP and ET would like to acknowledge valuable partial support from the National Science Foundation via DMS 1839993. KAF is partially supported by  Samford Faculty Development Grant (FUND 243084).

%%%%%%%
% Appendix
%%%%%%%
\appendix

\bibliographystyle{mdpi}

%=====================================
% References, variant A: internal bibliography
%=====================================
\renewcommand\bibname{References}

%=====================================
% References, variant B: external bibliography
%=====================================
%\bibliography{your_external_BibTeX_file}
%%%%%%%%%%%%%%%%%%%%%%%%%%%%%%%%%%%%%%%%%%
\end{document}